\newtheorem{theorem}{Theorem}[section]
\newtheorem{proposition}[theorem]{Proposition}
\newtheorem{lemma}[theorem]{Lemma}
\newtheorem{corollary}[theorem]{Corollary}
\newtheorem{conjecture}[theorem]{Conjecture}
\numberwithin{equation}{section}
\theoremstyle{definition}
\newtheorem{definition}[theorem]{Definition}
\newtheorem{remark}[theorem]{Remark}
\newtheorem{notation}[theorem]{Notation}
\newcommand{\Hom}{{\rm Hom}}
\newcommand{\CC}{\mathbb{C}}
\newcommand{\FF}{\mathbb{F}}
\newcommand{\PP}{\mathbb{P}}
\newcommand{\ZZ}{\mathbb{Z}}
\newcommand{\sL}{\mathcal{L}}
\newcommand{\sO}{\mathcal{O}}
\newcommand{\sD}{\mathcal{D}}
\renewcommand{\to}{\xymatrix@1@=15pt{\ar[r]&}}
\renewcommand{\rightarrow}{\xymatrix@1@=15pt{\ar[r]&}}
\renewcommand{\mapsto}{\xymatrix@1@=15pt{\ar@{|->}[r]&}}
\renewcommand{\twoheadrightarrow}{\xymatrix@1@=15pt{\ar@{->>}[r]&}}
\renewcommand{\hookrightarrow}{\xymatrix@1@=15pt{\ar@{^(->}[r]&}}
\newcommand{\congpf}{\xymatrix@1@=15pt{\ar[r]^-\sim&}}
\renewcommand{\cong}{\simeq}
\DeclareMathOperator{\Pic}{Pic}
\DeclareMathOperator{\rad}{rad}
\DeclareMathOperator{\degree}{deg}
\newcommand{\tildeD}{\widetilde{D}}
\newcommand{\tildeX}{\widetilde{X}}
\newcommand{\tildeE}{\widetilde{E}}
\newcommand{\tildeL}{\widetilde{L}}
\newcommand{\tildeC}{\widetilde{C}}
\newcommand{\tildeK}{\widetilde{K}}
\newcommand{\tildeB}{\widetilde{B}}
\newcommand{\tildeF}{\widetilde{F}}
\newcommand{\tildeG}{\widetilde{G}}
\newcommand{\tildeR}{\widetilde{R}}
\begin{document}

\newboolean{xlabels} 
\newcommand{\xlabel}[1]{ 
                        \label{#1} 
                        \ifthenelse{\boolean{xlabels}} 
                                   {\marginpar[\hfill{\tiny #1}]{{\tiny #1}}} 
                                   {} 
                       } 
\setboolean{xlabels}{false} 

\title[Determinantal Barlow surfaces and phantom categories]{Determinantal Barlow surfaces and phantom categories}

\author[B\"ohning]{Christian B\"ohning$^1$}
\address{Christian B\"ohning, Fachbereich Mathematik der Universit\"at Hamburg\\
Bundesstra\ss e 55\\
20146 Hamburg, Germany}
\email{christian.boehning@math.uni-hamburg.de}

\author[Bothmer]{Hans-Christian Graf von Bothmer$^2$}
\address{Hans-Christian Graf von Bothmer, Fachbereich Mathematik der Universit\"at Hamburg\\
Bundesstra\ss e 55\\
20146 Hamburg, Germany}
\email{hans.christian.v.bothmer@uni-hamburg.de}

\author[Katzarkov]{Ludmil Katzarkov$^3$}
\address{Ludmil Katzarkov, Department of Mathematics, University of Miami\\
Coral Gables, FL, 33146, USA \\
and Fakult\"at f\"ur Mathematik , Universit\"at Wien,\\
1090 Wien, Austria}
\email{lkatzark@math.uci.edu}

\author[Sosna]{Pawel Sosna$^2$}
\address{Pawel Sosna, Fachbereich Mathematik der Universit\"at Hamburg\\
Bundesstra\ss e 55\\
20146 Hamburg, Germany}
\email{pawel.sosna@math.uni-hamburg.de}

\thanks{$^1$ Supported by Heisenberg-Stipendium BO 3699/1-1 of the DFG (German Research Foundation)}
\thanks{$^2$ Supported by the RTG 1670 of the  DFG (German Research Foundation)}
\thanks{$^3$ Supported by grants NSF DMS0600800, NSF FRG DMS-0652633, NSF FRG DMS-0854977, NSF DMS-0854977, NSF DMS-0901330, grants FWF P 24572-N25 and FWF P20778, and an ERC grant --- GEMIS}

\begin{abstract}
We prove that the bounded derived category of the surface $S$ constructed by Barlow admits a length 11 exceptional sequence consisting of (explicit) line bundles. Moreover, we show  that in a small neighbourhood of $S$ in the moduli space of determinantal Barlow surfaces, the generic surface has a semiorthogonal decomposition of its derived category into a length 11 exceptional sequence of line bundles and a category with trivial Grothendieck group and Hochschild homology, called a phantom category. This is done using a deformation argument and the fact that the derived endomorphism algebra of the sequence is constant. Applying Kuznetsov's results on heights of exceptional sequences, we also show that the sequence on $S$ itself is not full and its (left or right) orthogonal complement is also a phantom category.
\end{abstract}

\maketitle

\section{Introduction}\xlabel{sIntroduction}

A (geometric) phantom category is an admissible subcategory $\mathcal{A}$ of the bounded derived category of coherent sheaves $\mathrm{D}^b(X)$ on a smooth projective variety $X$ with Hochschild homology $\mathrm{HH}_\ast(\mathcal{A})=0$ and Grothendieck group $\mathrm{K}_0(\mathcal{A}) = 0$.  Recently Katzarkov et al., \cite[Conj.\ 4.1]{DKK} and \cite[Conj.\ 29]{CKP}, conjectured that the derived category of the Barlow surface of \cite{Barlow1} should contain a phantom; this conjecture was based on the seminal works \cite{DON}, \cite{KOT}, \cite{OKV}, who used moduli spaces of instantons to distinguish smooth structures on Barlow surfaces and Del Pezzo surfaces of degree one, as well as on the works \cite{WIT}, \cite{AV} and \cite{ASP}, who studied the behaviour of D-branes under phase transitions. Further evidence for the possible existence of phantoms was given in the article \cite{BBS12}, where an admissible subcategory with vanishing Hochschild homology but with nonzero torsion Grothendieck group was produced in the derived category of the classical Godeaux surface. Shortly afterwards such ``quasi-phantoms" were  also found on Burniat surfaces by Alexeev and Orlov in \cite{A-O12}.  As we learned from S.\ Galkin at the conference ``Birational Geometry and Derived Categories'' in Vienna in August 2012, the preprint \cite{GMS} gives some arguments that make the existence of quasi-phantoms also plausible on fake projective planes with $3$-divisible canonical class. However, due to the rather complicated construction of the latter surfaces, one cannot yet prove this completely.

In this article we prove the existence of a phantom on a generic determinantal Barlow surface $S_t$ in a small neighbourhood of $S=S_0$ (the moduli space of determinantal Barlow surfaces is $2$-dimensional, see \cite{Cat81} or \cite{Lee00}), as well as on the Barlow surface $S$ itself. We think of $t$ as a deformation parameter. More precisely, our main result is the following.

\begin{theorem}\xlabel{tMain}
The derived category $\mathrm{D}^b (S_t)$ of a generic determinantal Barlow surface $S_t$ in a small neighbourhood of $S=S_0$ admits a semiorthogonal decomposition
\begin{gather*}
\mathrm{D}^b (S_t) = \langle \mathcal{A}_t, \mathcal{L}_{1, t}, \dots , \mathcal{L}_{11, t} \rangle
\end{gather*}
where $(\mathcal{L}_{1, t}, \dots ,\mathcal{L}_{11, t})$ is an exceptional sequence of line bundles and $\mathcal{A}_t$ is a phantom category. Moreover, if $S_{t_1}$ and $S_{t_2}$ are two surfaces in a small neighbourhood of a generic point of this family, then the categories $\langle \mathcal{L}_{1, t_1}, \dots ,\mathcal{L}_{11, t_1} \rangle$ and $\langle \mathcal{L}_{1, t_2}, \dots ,\mathcal{L}_{11, t_2} \rangle$ are equivalent. Furthermore, $\mathrm{D}^b(S)$ itself has a phantom.
\end{theorem}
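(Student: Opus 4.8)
The plan is to construct an explicit length-$11$ exceptional sequence of line bundles $(\mathcal{L}_1,\dots,\mathcal{L}_{11})$ on the Barlow surface $S$ and to control its behaviour in families over the $2$-dimensional moduli space of determinantal Barlow surfaces. First I would exploit the determinantal description: a determinantal Barlow surface arises as (the minimal model of) a quotient or as a surface cut out by a determinantal condition on some ambient rational or toric variety, so its Picard group contains many natural line bundles coming from the ambient geometry, the exceptional curves of blow-ups, and torsion classes. I would write down eleven such line bundles $\mathcal{L}_{i,t}$ that vary algebraically with the deformation parameter $t$, and verify the exceptionality conditions $\mathrm{Ext}^\bullet(\mathcal{L}_i,\mathcal{L}_i)=\KK$ and the semiorthogonality $\mathrm{Ext}^\bullet(\mathcal{L}_j,\mathcal{L}_i)=0$ for $j>i$; since $S$ is of general type with $p_g=q=0$, $K_S^2=1$, this amounts to a finite cohomology computation on $S$ (vanishing of $H^0$, $H^1$, $H^2$ of various twists), which one can in principle automate on a computer for the explicit determinantal model, e.g.\ using \texttt{Macaulay2}. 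This is the computational heart of the construction and the main obstacle: finding the \emph{right} eleven line bundles so that all the cohomology vanishings hold, and checking them rigorously over a Zariski-open set of the moduli space.

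Next I would upgrade the pointwise statement to a statement over the family. Consider the (relative) determinantal Barlow surface $\mathcal{S}\to T$ over a small base $T$ with $0\in T$ corresponding to $S$, together with the relative line bundles $\mathcal{L}_{i}$ on $\mathcal{S}$. By semicontinuity and the fact that the relevant $\mathrm{Ext}$-groups vanish at $t=0$ (and are concentrated in a single degree there), they vanish in a neighbourhood, so $(\mathcal{L}_{1,t},\dots,\mathcal{L}_{11,t})$ stays an exceptional sequence for $t$ near $0$. Then I would invoke the principle — which the excerpt already flags as the mechanism of the main theorem — that the derived endomorphism DG-algebra $\mathrm{RHom}(\bigoplus_i\mathcal{L}_{i,t},\bigoplus_i\mathcal{L}_{i,t})$ is locally constant in $t$ (its cohomology is finite-dimensional and, by the single-degree concentration, has no room to jump), hence by a deformation/base-change argument the admissible subcategory $\langle\mathcal{L}_{1,t},\dots,\mathcal{L}_{11,t}\rangle\subset\mathrm{D}^b(S_t)$ is equivalent to the one at $t=0$, and its right orthogonal $\mathcal{A}_t$ deforms accordingly. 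For generic $t$, Theorem~\ref{tMain} identifies $\mathcal{A}_t$ as a phantom; what remains is to transport the phantom property from generic $t$ back to $t=0$.

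For the last step — the existence of a phantom on $S=S_0$ itself — I would argue with Kuznetsov's height/pseudoheight machinery for exceptional sequences, as announced in the abstract. Concretely: the sequence $(\mathcal{L}_{1},\dots,\mathcal{L}_{11})$ on $S$ cannot be full, because a full exceptional sequence of line bundles on $S$ would force $\mathrm{K}_0(S)\cong\ZZ^{11}$ to be torsion-free, contradicting the known $2$-torsion in the Grothendieck group of the Barlow surface (its fundamental group / torsion in $\mathrm{Pic}$ contributes torsion to $\mathrm{K}_0$). Hence the orthogonal complement $\mathcal{A}_0=\langle\mathcal{L}_{1},\dots,\mathcal{L}_{11}\rangle^{\perp}$ is a nonzero admissible subcategory. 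To see it is a phantom I would compute $\mathrm{HH}_\ast(\mathcal{A}_0)$ and $\mathrm{K}_0(\mathcal{A}_0)$ via the additivity of Hochschild homology and the $\mathrm{K}_0$ under semiorthogonal decompositions: $\mathrm{HH}_\ast(S)=\mathrm{HH}_\ast(\mathcal{A}_0)\oplus\bigoplus_i\mathrm{HH}_\ast(\KK)$, and since $\mathrm{HH}_\ast(S)$ for a surface with $p_g=q=0$, $\dim=2$ is exactly $11$-dimensional and concentrated so as to match the eleven exceptional objects, we get $\mathrm{HH}_\ast(\mathcal{A}_0)=0$; then Kuznetsov's bound relating the length of an exceptional collection to the height, together with the vanishing of Hochschild homology, forces $\mathrm{K}_0(\mathcal{A}_0)$ to be finite, and a further argument (comparing the lattice and using that the total $\mathrm{K}_0(S)=\ZZ^{11}\oplus(\ZZ/2)$ splits off exactly the torsion into $\mathcal{A}_0$, or alternatively specializing the generic vanishing of $\mathrm{K}_0(\mathcal{A}_t)$ which is an open condition in the reverse direction) shows $\mathrm{K}_0(\mathcal{A}_0)=0$. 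I expect the delicate point here to be making the last $\mathrm{K}_0$-vanishing at the special point $t=0$ fully rigorous rather than just ``generic'', which is precisely why Kuznetsov's height estimates — rather than a naive semicontinuity argument — are needed.
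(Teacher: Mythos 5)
Your overall architecture matches the paper's (explicit exceptional sequence of line bundles, deformation over the moduli space, rigidity of the endomorphism algebra, Kuznetsov's heights for $S$ itself), but there are two genuine gaps, one of them a circularity and the other a factual error that the paper explicitly warns against.

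First, you never actually prove that $\mathcal{A}_t$ is nonzero for generic $t$: after establishing that the subcategories $\langle \mathcal{L}_{1,t},\dots,\mathcal{L}_{11,t}\rangle$ are all equivalent, you write that ``Theorem~\ref{tMain} identifies $\mathcal{A}_t$ as a phantom,'' which is the statement you are trying to prove. The missing idea is Kawamata's theorem that two D-equivalent minimal surfaces of general type are isomorphic: if the sequence were full for two nearby generic parameters $t_1\neq t_2$, the constancy of the $A_\infty$-Yoneda algebra would give $\mathrm{D}^b(S_{t_1})\simeq\mathrm{D}^b(S_{t_2})$ and hence $S_{t_1}\cong S_{t_2}$, contradicting the fact that the determinantal Barlow surfaces have $2$-dimensional moduli. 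This is the entire point of proving rigidity of the endomorphism algebra; without it the rigidity statement buys you nothing. (Note also that the rigidity proof itself requires more than ``single-degree concentration'': the $\mathrm{Ext}$'s between distinct $\mathcal{L}_{i,t}$ sit in degrees $1$ and $2$, and one must check separately that $m_2$ vanishes between distinct objects and that all $m_d$, $d\geq 3$, vanish for degree reasons. One also needs Voisin's proof of the Bloch conjecture for the generic determinantal Barlow surface to know $\mathrm{K}_0(S_t)\cong\ZZ^{11}$, hence that the nonzero complement is a phantom rather than a quasi-phantom.)

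Second, your non-fullness argument at $t=0$ is based on ``the known $2$-torsion in the Grothendieck group of the Barlow surface,'' which is false: the Barlow surface is simply connected, $\mathrm{Pic}(S)\cong\ZZ^9$ is torsion-free, and $\mathrm{K}_0(S)\cong\ZZ^{11}$ is torsion-free (the paper stresses precisely that one \emph{cannot} use torsion here, in contrast to the Godeaux and Burniat cases). So this route collapses, and Kuznetsov's height is not a supplementary tool for bounding $\mathrm{K}_0(\mathcal{A}_0)$ but the \emph{only} mechanism for non-fullness at $t=0$: one computes that the anticanonical height of the collection equals $2\geq 1-\dim S$, which by Kuznetsov's criterion forces the collection not to be full. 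Once non-fullness is known, $\mathrm{K}_0(\mathcal{A}_0)=0$ is immediate because the eleven classes $[\mathcal{L}_i]$ form a semiorthonormal basis of $\mathrm{K}_0(S)\cong\ZZ^{11}$, and $\mathrm{HH}_*(\mathcal{A}_0)=0$ follows from additivity since $\mathrm{HH}_*(S)$ is $11$-dimensional; no ``further argument'' about finiteness of $\mathrm{K}_0(\mathcal{A}_0)$ is needed or available.
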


After the discovery of the main results of this paper, we learned that Gorchinskiy and Orlov (\cite{GorOrl}) very recently produced another phantom category in the bounded derived category of a product of two surfaces by an ingenious and totally different method. 

Note that by \cite{Kawamata02} two minimal surfaces of general type with equivalent derived categories are isomorphic, so $\mathrm{D}^b (S_t)$ has to vary with the moduli of $S_t$. The way the moduli are encoded is analogous to what happens for Burniat surfaces in \cite{A-O12} (which was very inspiring for our proof). We prove that the $A_{\infty}$-Yoneda algebra of the exceptional sequence does not vary in a neighbourhood of a generic determinantal Barlow, and deduce from this the existence of the phantoms. Note that $\mathrm{K}_0 (S_t) \simeq \ZZ^{11}$ is torsion free, so we cannot use the torsion to prove that our exceptional sequence is not full. Likewise, we do not yet know how to exhibit explicit objects in $\mathcal{A}_t$ as was done in \cite{BBS12} (they all came from the fundamental group which is trivial here). It is a very interesting topic for future investigations to try to ``lay hands" on $\mathcal{A}_t$ and try to produce explicit objects in it or even explicitly describe a strong generator. 

Here is a short roadmap of the paper: in Section \ref{sConstruction} we recall the features of Barlow's construction of the surface $S$ which we will need later. In Section \ref{sLatticeTheory} we describe the symmetry of the classes of line bundles in the exceptional sequence we are going to construct. Section \ref{sCurvesBarlow} contains the construction of curves leading to an explicit integral basis in $\mathrm{Pic} (S)$, and the description of the intersection theory pertaining to it. In Section \ref{sExceptional} we explain how we obtain estimates for spaces of sections of line bundles on $S$ and prove the existence of the length $11$ exceptional sequence. In Section \ref{sDeformation}, we compute what we call cohomology data associated to this sequence, that is, the dimensions of extension groups (in the forward direction). Using a deformation argument, we prove existence of phantoms.  In Section \ref{sKuznetsov}, we prove that $S$ itself has a phantom using Kuznetsov's recent results on heights for exceptional sequences. In the last Section \ref{sConjectures}, some conjectures concerning possible applications of phantom categories are presented. 

We hope that the existence of phantom categories is not exclusively a pathology, but rather an interesting and useful structure in some derived categories of varieties. \smallskip

\textbf{Acknowledgments.} We would like to thank Denis Auroux, Fedor Bogomolov, Fabrizio Catanese, Igor Dolgachev, Sergey Galkin, Sergey Gorchinskiy, Sergei Gukov, Vladimir Guletskii, Fabian Haiden, Dmitry Kaledin, Alexander Kuznetsov, Maxim Kontsevich, Stefan M\"uller-Stach, Dmitri Orlov, Miles Reid, Eric Sharpe and Cumrun Vafa for useful discussions, helpful suggestions and remarks. Special thanks to Claire Voisin for making the results of \cite{Voi} available to us, thereby allowing us to render the treatment in Section \ref{sDeformation} complete.

\section{Notation and construction of the Barlow surface}\xlabel{sConstruction}

Let us recall the construction of determinantal Barlow surfaces in general. References are, for example, \cite{Barlow1}, \cite{Lee00} and \cite{Lee01}. Let $(x_1, \dots , x_4)$ be coordinates in $\PP^3$ and consider an action of $D_{10}= \langle \sigma, \tau \rangle$ on $\PP^3$ via
\begin{align*}
\sigma\colon (x_1, x_2, x_3, x_4) &\mapsto (\xi^1 x_1, \xi^2 x_2, \xi^3 x_3, \xi^4 x_4) , \\
\tau\colon (x_1, x_2, x_3, x_4) &\mapsto (x_4 , x_3, x_2, x_1)
\end{align*}
where $\xi$ is a primitive fifth root of unity. Then $D_{10}$-invariant symmetric determinantal quintic surfaces $Q$ in $\PP^3$ can be given as the determinants of the following matrices (see e.g. \cite{Lee00}, p. 898):
\[
A = \left( 
\begin{array}{ccccc}
0 & a_1x_1 & a_2 x_2 & a_2 x_3 & a_1 x_4 \\
a_1 x_1 & a_3x_2 & a_4x_3 & a_5 x_4 & 0\\
a_2 x_2 & a_4 x_3 & a_6 x_4 & 0 & a_5x_1 \\
a_2 x_3 & a_5 x_4 & 0 & a_6x_1 & a_4 x_2 \\
a_1 x_4 & 0 & a_5 x_1 & a_4 x_2 & a_3x_3
\end{array}
\right)
\]
where $a_1, \dots ,a_6$ are parameters. The generic surface $Q$ has an even set of $20$ nodes, so that there is double cover $\varphi_{K_Y} \colon Y \to Q$ with involution $\iota$ branched over the nodes. Here $\varphi_{K_Y}$ is the canonical morphism.  There is a twisted action of $D_{10}= \langle \sigma ,(\tau, \iota ) \rangle$ on $Y$ which has a group of automorphisms $H = \langle \sigma ,\tau \rangle \times \langle \iota \rangle = D_{10} \times \ZZ/2$.  Then  $X= Y/\langle \sigma ,(\tau, \iota ) \rangle$ is a surface with $4$ nodes whose resolution $\tilde{X}$ is a simply connected surface with $p_g=q=0$ (a determinantal Barlow surface) and $W = Y / \langle \sigma,\iota \rangle$ is a determinantal Godeaux surface (with $4$ nodes). This construction gives a $2$-dimensional moduli space of determinantal Barlow surfaces. The geometry is summarized in the following diagram

\[
\xymatrix{
    &   &   Y\ar[d]^{\ZZ/5 = \langle \sigma \rangle}\ar[lld]^{\ZZ/2 = \langle \iota \rangle }_{\varphi_{K_Y}}  \ar@/^6pc/[dd]_{D_{10}}^p  & & & \tilde{Y}\ar[lll]_{\tilde{\gamma}}\ar[dd]^{\tilde{p}} \\
Q \ar[d]^{\ZZ/5 = \langle \sigma \rangle }_{\pi } &    &   V   \ar[d]^{\ZZ/2 = \langle (\iota , \tau )\rangle}\ar[lld]^{\ZZ/2 = \langle \iota \rangle}  &        & & &         \\
W \ar[rd]_{\ZZ/2 = \langle \tau \rangle }  &   &  X \ar[ld]^{\ZZ/2 = \langle \iota \rangle }   & & &  \tilde{X}\ar[lll]^{\gamma}\\
    & \Sigma \ar@{.>}[d]^{\mathrm{bir.}}_{\simeq } &   &   & & &  \\
    & \PP^1 \times \PP^1 & & & & & 
}
\]
Here $V$ is a Campedelli surface, the double cover of the Godeaux surface $W$ ramified in the even set of four nodes of $W$. Thus $p_g(V) = q(V) =0$, $K^2_V =2$, $\pi_1 (V) = \ZZ/5$. 

The surface $Y$ has an explicit description as follows \cite{Cat81}, \cite{Reid81}: let
\[
R = \CC [x_1, \dots , x_4, y_0, \dots , y_4] /I
\]
where $\deg (x_i) =1$, $\deg (y_j) = 2$ and the ideal $I$ of relations is generated by 
\begin{gather*}
\sum_j A_{ij} y_j \quad\text{(5 relations in degree 3)} \\
 y_iy_j - B_{ij} \quad  \text{(15 relations in degree 4)} 
\end{gather*}
where $B_{ij}$ is the $(i,j)$-entry of the adjoint matrix of $A$ (in particular, $I$ contains $\det A$).  Then $Y$  is the subvariety in weighted projective space
\[
Y = \mathrm{Proj} (R) \subset \PP (1^4, 2^5 ) .
\]
This is a smooth (\cite[Prop.\ 2.11]{Cat81}) surface of general type with $p_g=4$, $q=0$, $K^2 =10$. 

\begin{remark}\xlabel{rClassical}
The special Barlow surface considered in \cite{Barlow1} corresponds to the choice of parameters
\[
a_1=a_2=a_4=a_5=1, \quad a_3=a_6=-4 .
\]
This can be seen by applying the base change 
\begin{align*}
	{x}_{1}&=5({X}_{1}+{X}_{2}+{X}_{3}+{X}_{4})\\
	{x}_{2}&=5(\xi {X}_{1}+\xi^{2} {X}_{2}+\xi^{3} {X}_{3}+\xi^4 {X}_{4})\\
	{x}_{3}&=5(\xi^2 {X}_{1}+(\xi^{2})^2 {X}_{2}+(\xi^{3})^2 {X}_{3}+(\xi^4)^2 {X}_{4})\\
	{x}_{4}&=5(\xi^3 {X}_{1}+(\xi^{2})^3 {X}_{2}+(\xi^{3})^3 {X}_{3}+(\xi^4)^3 {X}_{4})\\
	{x}_{5}&=5(\xi^4 {X}_{1}+(\xi^{2})^4 {X}_{2}+(\xi^{3})^4 {X}_{3}+(\xi^4)^4 {X}_{4})\\
	{y}_{0}&=\frac{1}{5}\left(\frac{Y_0}{6} + \xi^{2} {Y}_{1}+\xi^4 {Y}_{2}+ \xi {Y}_{3}+ \xi^{3} {Y}_{4}\right)\\
	{y}_{1}&=\frac{1}{5}\left(\frac{Y_0}{6} + \xi {Y}_{1}+ \xi^{2} {Y}_{2}+ \xi^{3} {Y}_{3}+\xi^4 {Y}_{4}\right)\\
	{y}_{2}&=\frac{1}{5}\left(\frac{Y_0}{6} + {Y}_{1}+ {Y}_{2}+ {Y}_{3}+ {Y}_{4}\right)\\
	{y}_{3}&=\frac{1}{5}\left(\frac{Y_0}{6} + \xi^4{Y}_{1}+ \xi^{3} {Y}_{2}+ \xi^{2} {Y}_{3}+ \xi {Y}_{4}\right)\\
	{y}_{4}&=\frac{1}{5}\left(\frac{Y_0}{6} + \xi^{3} {Y}_{1}+ \xi {Y}_{2}+\xi^4 {Y}_{3}+ \xi^{2} {Y}_{4}\right)
\end{align*}
to the setup given in \cite{Reid81}. We denote this special surface by $S$. It is distinguished by the fact that $Q$ is even invariant under a larger group $\mathfrak{S}_5$. 
\end{remark}

\begin{remark}\xlabel{rInvariants}
Invariants of $S$ are:
\begin{gather*}
K^2_S=1, \; p_g=q=0, \; \pi_1 (S) = \{ 1 \} , \\
K_0 (S) \simeq \ZZ^{11} , \; \mathrm{Pic} (S) \simeq H^2 (S, \ZZ )\simeq H_2 (S, \ZZ ) \simeq \ZZ^9 .
\end{gather*}

All integral cohomology classes on $S$ are algebraic. 

The least obvious statement that $\mathrm{K}_0 (S ) \simeq \mathbb{Z}^{11}$ follows from the fact that $\mathrm{Pic} (X) \simeq \ZZ^9$ and from the Bloch conjecture for $S$: $\mathrm{CH}^2 (S) \simeq \ZZ$ (this is known from \cite{Barlow2}). The argument is as follows: for surfaces we have 
\[\mathrm{rank}\colon F^0\mathrm{K} (S)/F^1\mathrm{K} (S)\cong \mathrm{CH}^0(S)\cong\ZZ,\]
\[c_1\colon F^1\mathrm{K} (W)/F^2\mathrm{K} (W)\cong \mathrm{Pic}(W),\]
\[c_2\colon F^2\mathrm{K}(S)\cong \text{CH}^2(S),\]
where $F^i \mathrm{K} (S)$ is the filtration of $\mathrm{K}_0 (S)$ by codimension of support. Moreover, $\mathrm{CH}^2 (S)$ is generated by the structure sheaf $\mathcal{O}_p$ of a point in $S$ and this is primitive in $\mathrm{K}_0 (S)$ (e.g. because $\chi (\mathcal{O}_p, \mathcal{O}_S ) = 1$). Then, looking at the sequence of extensions given by the filtration steps, one sees that $\mathrm{K}_0 (S) \simeq \ZZ^{11}$. 
\end{remark}

\begin{remark}\xlabel{rBasicRelations}
The following are some basic facts in this set-up.
\begin{itemize}
\item[(1)]
We have that $X$ and $W$ have rational singularities, $K_X$ and $K_W$ are invertible, and if $\pi\colon Q \to W$ is the projection, $(\pi\circ \varphi_{K})^{\ast } (K_W) = K_Y$. Moreover, $p^* K_X = K_Y$ and $\gamma^{\ast } (K_X ) = K_{\tildeX}$.
\item[(2)]
Locally around the four fixed points of the group $\ZZ/2 = \langle (\iota , \tau ) \rangle $, the quotient map $V \to X = V / (\ZZ/2 )$ looks like 
$\mathbb{A}^2 \to \mathrm{cone}\subset \mathbb{A}^3$ given by $(x,y) \mapsto (x^2, y^2, xy)$. 
\item[(3)]
The bundle $K_Y$ carries a canonical $D_{10}$-linearization corresponding to the $D_{10}$-action on $H^0 (Y, K_Y) \simeq \langle x_1, \dots ,x_4 \rangle$ given by the cycles $\sigma$ and $\tau$ as above.  In general, the action on $\bigoplus_{m\ge 0} H^0 (Y, mK_Y )$ is the one described in Remark \ref{rOperation} on $R$: this is the canonical ring. 
\end{itemize}
\end{remark}

\section{Lattice theory and semiorthonormal bases}\xlabel{sLatticeTheory}
We have $\mathrm{Pic} (S) = \mathbf{1} \perp (-E_8)$ as a lattice. We recall some facts from \cite{BBS12} which we will use in the sequel.

\begin{definition}\xlabel{dNumExceptional}
A sequence of classes $l_1, \dots , l_N$ in $\mathrm{K}_0 (S)$ is called \emph{numerically exceptional} if $\chi (l_i, l_i) =1$, for all $i$, and $\chi (l_i, l_j) = 0$ for $i > j$. 
\end{definition}

Let $A_1, \dots , A_8$ and $B_1, B_2$ be roots in $\mathrm{Pic} (S)$ with the following intersection behaviour:

\vspace{1cm}
\begin{center}
\setlength{\unitlength}{1cm}
\begin{picture}(8,1)
\put(0,0){$\bullet$}
\put(1,0){$\bullet$}
\put(2,0){$\bullet$}
\put(3,0){$\bullet$}
\put(4,0){$\bullet$}
\put(5,0){$\bullet$}
\put(6,0){$\bullet$}
\put(7,0){$\bullet$}

\put(2,1){$\bullet$}
\put(5,1){$\bullet$}

\put(0.1,0.1){\line(1,0){1}}
\put(1.1,0.1){\line(1,0){1}}
\put(2.1,0.1){\line(1,0){1}}
\put(3.1,0.1){\line(1,0){1}}
\put(4.1,0.1){\line(1,0){1}}
\put(5.1,0.1){\line(1,0){1}}
\put(6.1,0.1){\line(1,0){1}}

\put(2.1,0){\line(0,1){1}}
\put(5.1,0){\line(0,1){1}}

\put(0,-0.5){$A_1$}
\put(1,-0.5){$A_2$}
\put(2,-0.5){$A_3$}
\put(3,-0.5){$A_4$}
\put(4,-0.5){$A_5$}
\put(5,-0.5){$A_6$}
\put(6,-0.5){$A_7$}
\put(7,-0.5){$A_8$}

\put(1.5, 1){$B_1$}
\put(5.2,1){$B_2$}

\dashline{0.2}(2.1,1.1)(5.1,1.1)
\put(3.4,1.3){$-1$}
\end{picture}
\end{center}
\vspace{1cm}

Here, if two nodes are joined by a solid line, the intersection is $1$, otherwise it is zero. Moreover, $B_1$ and $B_2$ have intersection $-1$. 

\begin{proposition}\xlabel{pNumerics}
The sequence
\begin{align*}
A_1,\\
A_1+A_2,\\
k - B_1,\\
A_1+A_2+A_3, \\
A_1+A_2+A_3+A_4, \\
A_1+A_2+A_3+A_4+A_5, \\
k-B_2,\\
A_1+A_2+A_3+A_4+A_5+A_6, \\
A_1+A_2+A_3+A_4+A_5+A_6+A_7, \\
A_1+A_2+A_3+A_4+A_5+A_6+A_7+A_8, \\
\mathcal{O}
\end{align*}
is numerically exceptional of length $11$. 
\end{proposition}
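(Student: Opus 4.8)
The statement is a finite verification inside the lattice $\mathrm{Pic}(S)=\mathbf{1}\perp(-E_8)$, so the plan is to reduce it to Riemann--Roch and then dispatch the $55=\binom{11}{2}$ relevant pairings by sorting them into a handful of types. First I would record what enters: $K_S^2=1$ and $\chi(\mathcal{O}_S)=1-q(S)+p_g(S)=1$ (Remark~\ref{rInvariants}), and that $k=K_S$ spans the $\mathbf{1}$-summand, so $k^2=1$ while $k\cdot A_a=k\cdot B_b=0$ for all $a,b$. For line bundles $L_i,L_j$ of classes $l_i,l_j$ one has $\chi(l_i,l_j)=\chi(S,L_j-L_i)$, and Riemann--Roch on the surface gives $\chi(S,D)=1+\tfrac12\,D\cdot(D-K_S)$; hence, with $d_{ij}:=l_i-l_j$,
\[
\chi(l_i,l_j)=1+\tfrac12\,d_{ij}\cdot(d_{ij}+K_S).
\]
In particular $\chi(l_i,l_i)=1$ comes for free, and for $i>j$ the requirement $\chi(l_i,l_j)=0$ is equivalent to the single numerical condition $d_{ij}^2+d_{ij}\cdot K_S=-2$, which I abbreviate $(\ast)$.

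The observation that makes the check tractable is that every class in the list other than $l_3=k-B_1$ and $l_7=k-B_2$ is a partial sum $A_1+\dots+A_m$, $0\le m\le 8$ (the value $m=0$ being $l_{11}=\mathcal{O}$), and that the list is ordered compatibly with $m$ on those indices. I would then split the pairs $i>j$ into three groups. \emph{(a) Both $l_i$ and $l_j$ are partial sums} ($36$ pairs, including all pairs involving $\mathcal{O}$): then $\pm d_{ij}$ is a sum of consecutive roots $A_{m'+1}+\dots+A_m$, i.e.\ a root of the (negative) $A_8$-lattice spanned by $A_1,\dots,A_8$, so $d_{ij}^2=-2$ and $d_{ij}\cdot K_S=0$, giving $(\ast)$. \emph{(b) Exactly one of $l_i,l_j$ equals $l_3$ or $l_7$, the other a partial sum} ($18$ pairs): here I compute $d_{ij}^2$ and $d_{ij}\cdot K_S$ directly from $k^2=1$, $k\cdot A_a=k\cdot B_b=0$, and the fact that $B_1$ meets only $A_3$ and $B_2$ only $A_6$ among the $A_a$; one gets, for instance, $d_{10,3}^2=-1$, $d_{10,3}\cdot K_S=-1$ and $d_{7,1}^2=-3$, $d_{7,1}\cdot K_S=1$, and $(\ast)$ comes out in every sub-case. \emph{(c) The single pair $(l_7,l_3)$}: here $d_{7,3}=B_1-B_2$, so $d_{7,3}^2=B_1^2-2B_1\cdot B_2+B_2^2=-2-2(-1)-2=-2$ and $d_{7,3}\cdot K_S=0$, again $(\ast)$. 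Carrying out the remaining routine arithmetic in group~(b) in the same way finishes the proof.

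I do not expect a conceptual obstacle: the argument is Riemann--Roch together with arithmetic in $\mathrm{Pic}(S)$. What needs care is the bookkeeping in group~(b) --- keeping the list in exactly the stated order, so that $l_3$ and $l_7$ occupy positions $3$ and $7$ (which fixes which pairs $(i,j)$ with $i>j$ involve one of them), and reading the intersection numbers off the diagram correctly, in particular not forgetting the negative entry $B_1\cdot B_2=-1$, without which $(\ast)$ would fail exactly for the pair $(l_7,l_3)$ of group~(c).
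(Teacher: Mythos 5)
Your proof is correct, and it is the standard (essentially the only) way to verify the claim: reduce $\chi(l_i,l_j)=0$ via Riemann--Roch to the condition $d_{ij}^2+d_{ij}\cdot K_S=-2$ and check it case by case using $k^2=1$, $k\cdot A_a=k\cdot B_b=0$ and the given intersection diagram; all $36+18+1=55$ pairs work out as you say. The paper itself gives no proof here but simply cites \cite[Prop.\ 5.6]{BBS12}, whose verification proceeds along the same lines.
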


This is \cite[Prop.\ 5.6]{BBS12}. The exceptional sequence we will construct on $S$ has this numerical behaviour. One advantage of this particular sequence is that the degrees of the differences of two classes in it are quite small, so there is a good chance to realize it as an actual exceptional sequence on $S$. The surface $S$ is homeomorphic to $\PP^2$ blown up in $8$ points, a del Pezzo surface of degree $1$, and the numerics of full exceptional sequences on del Pezzo surfaces has been thoroughly investigated (see, for example, \cite{KarNog}); also in this light, the sequence above seems to be most advantageous for our purposes.

\section{Curves on the Barlow surface and an explicit basis of the Picard group}\xlabel{sCurvesBarlow}

In this section we construct curves on the Barlow surface $S$. They will be used to make the intersection theory on the Barlow surface explicit. We will also use them to write down the exceptional sequence and to calculate sections of line bundles in section \ref{sExceptional}. 
In a first step we construct $D_{10}$-invariant curves on $Q$, pull them back to $Y$ and consider their images on $X$ and strict transforms on $\tildeX$. These curves are of degree $1$ and generate a $\textbf{1} \oplus (-D_8)$-sublattice of $\mathrm{Pic} (\tildeX )$. In a second step, using lattice theory, we find an effective divisor in the $\textbf{1} \oplus (-D_8)$-lattice which is divisible by $2$ as an effective divisor. The resulting divisor is of degree $2$. The degree $1$ curves together with this degree $2$ curve generate $\mathrm{Pic} (\tildeX )$ as a lattice. In a third step we use linkage and the automorphisms of $Y$ to construct $32$ curves of the same type as the degree $2$ curve above. 
Finally we calculate intersection numbers and write down our exceptional sequence and prove that the classes of the line bundles form a semiorthonormal basis of $\mathrm{K}_0 (\tildeX )$. 

The Macaulay2 scripts used to do the necessary calculations of this section and the following ones can be found at \cite{BBKS12}.

\begin{remark}\xlabel{rOperation}
The $D_{10}$-action on $Y$ (resp. the ambient $\PP (1^4, 2^5)$) is given by
 \begin{center}
\begin{tabular}{lcl}
	$\sigma(x_i)= \xi^i x_i$ & and & $\sigma(y_i)= \xi^{-i} y_i$ \\
	$\tau(x_i)=  x_{-i}$ & and &$\tau(y_i)= y_{-i}$ \\
	$\iota(x_i)= x_i$ & and  &$\iota(y_i)= -y_{i}$ \\
	$\alpha(x_i)= x_{\alpha(i)}$ & and & $ \alpha(y_i)= y_{\alpha(i)}$
\end{tabular}
\end{center}

with $\alpha = (1342)$. Moreover, we set $\beta = \iota\circ  \tau$. Then $D_{10} = \langle \sigma, \beta \rangle$ operates on $Y$.  The indices are interpreted as elements of $\ZZ/5$. The projection $Y\to Q$ is $D_{10}$-equivariant, where $\beta$ acts as $\tau$ on $Q$. Moreover, $\alpha$, $\tau$ and $\iota$ normalize the subgroup $D_{10}$. Hence they induce automorphisms on the quotient $X$ and we have $\tau = \iota = \alpha^2$.
\end{remark}

\newcommand{\goldenSection}{\Phi}

\newcommand{\EQ}{\overline{E}}
\newcommand{\LQ}{\overline{L}}
\newcommand{\PQ}{\overline{P}}

\begin{proposition}
The determinantal quintic $Q$ contains $15$ lines:
\begin{align*}
	\LQ^0_i &= \sigma^i(-t:-s:s:t )\\
	\LQ^+_i &= \sigma^i(s-\goldenSection t : -s+\goldenSection^{-1}t :-s :s+t )\\
	\LQ^-_i &= \sigma^i(s+\goldenSection^{-1} t : -s-\goldenSection t :-s :s+t )
\end{align*}
with $(s:t) \in \PP^1$ and $\goldenSection = (\sqrt{5} -1)/2$ is the golden section.
\end{proposition}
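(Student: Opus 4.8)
The plan is to exploit the equivariance of the defining matrix $A$ under $D_{10}=\langle\sigma,\tau\rangle$ in order to reduce to three lines, and then to settle $\LQ^{0}_{0}$ by a symmetry argument and $\LQ^{\pm}_{0}$ by a direct substitution. First I would record the covariance of $A$: reading off its explicit shape one finds
\[
A(\sigma x)=D\,A(x)\,D,\quad D=\mathrm{diag}(1,\xi,\xi^{2},\xi^{3},\xi^{4}),\qquad A(\tau x)=P^{T}A(x)\,P ,
\]
where $P$ is the permutation matrix of the involution $(2\,5)(3\,4)$ (fixing the index $1$). Since $(\det D)^{2}=(\det P)^{2}=1$ this gives $\det A(\sigma x)=\det A(\tau x)=\det A(x)$, so $Q$ is $D_{10}$-invariant and $\sigma^{i}$ carries $\LQ^{\bullet}_{0}$ to $\LQ^{\bullet}_{i}$; hence it is enough to prove $\LQ^{0}_{0},\LQ^{+}_{0},\LQ^{-}_{0}\subset Q$. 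The $15$ lines are then automatically distinct: none of them is one of the six $\sigma$-invariant coordinate lines, so each $\sigma$-orbit has exactly $5$ elements, and the three orbits are pairwise different since $\LQ^{0}_{0}=\PP(\ker(\tau+\mathrm{id}))$ is $\tau$-invariant whereas $\LQ^{\pm}_{0}$ are not, and $\LQ^{+}_{0}\ne\LQ^{-}_{0}$ (they already differ at $(s:t)=(0:1)$).

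For $\LQ^{0}_{0}$ there is a conceptual argument. This line equals $\PP(U_{-})$, where $U_{-}\subset\CC^{4}$ is the $(-1)$-eigenspace of $\tau$: indeed $\tau(-t:-s:s:t)=(t:s:-s:-t)$ is the same point of $\PP^{3}$. Restricting the covariance $A(\tau x)=P^{T}A(x)P$ to $x\in U_{-}$, where $\tau x=-x$ forces $A(\tau x)=-A(x)$, one obtains $P^{T}A(x)P=-A(x)$; thus the symmetric bilinear form $A(x)$ satisfies $A(x)(Pu,Pv)=-A(x)(u,v)$, and so vanishes identically on the $(+1)$-eigenspace of $P$, which is $3$-dimensional. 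A symmetric $5\times5$ matrix admitting a $3$-dimensional isotropic subspace has rank at most $4$, so $\det A(x)=0$ for every $x\in U_{-}$, i.e.\ $\LQ^{0}_{0}\subset Q$. (The same computation shows that the other $\tau$-fixed line $\PP(\ker(\tau-\mathrm{id}))$ is in general \emph{not} contained in $Q$, in agreement with the statement.)

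For $\LQ^{+}_{0}$ and $\LQ^{-}_{0}$ I would substitute the two parametrizations into $A$ and verify that the resulting $5\times5$ determinant vanishes identically as an element of $\CC[a_{1},\dots,a_{6},s,t]$; by the $\tau$-covariance it is in fact enough to treat one of the two. These two lines correspond to the two roots $\goldenSection$ and $-\goldenSection^{-1}$ of $X^{2}+X-1$, and the cancellations in the determinant are governed precisely by this quadratic relation (equivalently by $\goldenSection^{-1}-\goldenSection=1$ and $\goldenSection^{-1}+\goldenSection=\sqrt5$), which is itself a trace of the $\ZZ/5$-symmetry. One can shorten the bookkeeping slightly by noting that $\LQ^{+}_{0}$ and $\LQ^{-}_{0}$ both lie in the $\tau$-invariant plane $\{x_{1}+x_{2}-x_{3}-x_{4}=0\}$, so that the computation really concerns the restricted plane quintic; but it is cleanest carried out by machine, which is what the Macaulay2 scripts at \cite{BBKS12} do.

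The step I expect to be the genuine obstacle is not the verification but the \emph{discovery} of the lines $\LQ^{\pm}_{i}$. Unlike $\LQ^{0}_{0}$, these are fixed by no reflection in $D_{10}$ --- the ten lines $\LQ^{\pm}_{i}$ form a single free $D_{10}$-orbit, the pointwise-fixed lines of the five reflections being exhausted by the $\LQ^{0}_{i}$ together with the five $\sigma$-translates of $\PP(\ker(\tau-\mathrm{id}))$ --- so there is no symmetry shortcut, and one has to solve the rank-drop condition $\det(A|_{\ell})\equiv 0$ for a line $\ell$ directly; this is where the quadratic with roots $\goldenSection$ and $-\goldenSection^{-1}$ enters, and it is a pleasant feature of the answer that the resulting lines do not depend on the parameters $a_{1},\dots,a_{6}$.
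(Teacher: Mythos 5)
Your overall strategy is the same as the paper's: reduce to $i=0$ by the $\sigma$-invariance of $Q$ and then verify the three base lines (the paper does all three by ``direct calculation''). Your eigenspace argument for $\LQ^0_0$ is a genuine improvement over brute force: the covariances $A(\sigma x)=DA(x)D$ and $A(\tau x)=P^{T}A(x)P$ do hold for the matrix $A$ of Section \ref{sConstruction}, and for $x$ in the $(-1)$-eigenspace of $\tau$ the relation $P^{T}A(x)P=-A(x)$ forces the $3$-dimensional $(+1)$-eigenspace of $P=(2\,5)(3\,4)$ to be isotropic, whence $\mathrm{rk}\,A(x)\le 4$ and $\det A(x)=0$. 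This also explains why the other pointwise $\tau$-fixed line $\PP(\ker(\tau-\mathrm{id}))$ is \emph{not} on $Q$, consistent with the proposition that it meets $Q$ in five points.

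Two things need repair. First, the ten lines $\LQ^{\pm}_{i}$ do \emph{not} form a single free $D_{10}$-orbit: the two pentagons $\langle\LQ^{+}_{0}\rangle$ and $\langle\LQ^{-}_{0}\rangle$ are each separately $D_{10}$-invariant --- they occur as $\EQ^{\pm}_5$ among the eight $D_{10}$-invariant elliptic quintics of Proposition \ref{pEllipticCurves}, and $\tau$ acts on each pentagon as a reflection permuting its five edges. The two pentagons are interchanged by the Galois conjugation $\Phi\mapsto-\Phi^{-1}$ (i.e.\ $\sqrt5\mapsto-\sqrt5$), equivalently by the extra automorphism $\alpha\notin D_{10}$, not by any element of $D_{10}$; one can also check directly that $\tau(\LQ^{+}_{0})\ne\LQ^{-}_{0}$ (the required identity would be $\Phi^{-1}-\Phi=\Phi^{2}$, but the left side is $1$ and the right side is $1-\Phi$). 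So your reduction ``it suffices to treat one of $\LQ^{\pm}_{0}$'' fails and both substitutions must be carried out --- a harmless doubling of work. Second, and more substantively, you only prove that the fifteen listed lines lie on $Q$ and are distinct; the paper's proof also shows, by a direct calculation on the Grassmannian, that $Q$ contains \emph{at most} fifteen lines. That completeness statement is part of what the proposition asserts and is used downstream (for instance in identifying the pentagons among the invariant elliptic quintics), and your write-up is silent on it.
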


\begin{proof}
$\LQ^0_0$ is the unique $\tau$-invariant line on $Q$. A direct calculation also shows $\LQ^\pm_0 \subset Q$. The remaining lines
lie on $Q$ since $Q$ is $\sigma$-invariant. A direct calculation on the Grassmannian shows that there are at most $15$ lines on $Q$.  
\end{proof}

\begin{lemma}
The lines of the $\sigma$-orbit $\langle \LQ^0_0 \rangle$ are disjoint. The lines in the $\sigma$-orbits $\langle \LQ^\pm_0 \rangle$ form two pentagons.
\end{lemma}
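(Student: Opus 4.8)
The statement is purely about the pattern of incidences among the five lines in each $\sigma$-orbit, so the plan is to reduce it to a bounded list of $4\times 4$ rank computations. Since $\sigma$ is a linear automorphism of $\PP^3$ preserving $Q$ and carrying $\LQ^\bullet_i$ to $\LQ^\bullet_{i+1}$ (for $\bullet\in\{0,+,-\}$, indices in $\ZZ/5$), the incidence graph on each orbit is invariant under the cyclic shift of indices; hence it is determined by the set of $k\in\{1,2,3,4\}$ with $\LQ^\bullet_0\cap\LQ^\bullet_k\neq\emptyset$, and it suffices to test those four pairs per orbit. Each such test is a determinant: two distinct lines in $\PP^3$ meet iff they are coplanar, i.e.\ iff the $4\times 4$ matrix whose rows are a pair of points spanning one line together with a pair spanning the other is singular, and they are disjoint iff it is nonsingular. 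The five lines in each orbit are visibly distinct — they meet the plane $\{x_3=0\}$ in five distinct points — so this dichotomy applies.

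For the orbit $\langle\LQ^0_0\rangle$ I would span $\LQ^0_0$ by $(-1:0:0:1)$ and $(0:-1:1:0)$ and apply $\sigma^k$. Grouping the homogeneous coordinates into the pairs $\{x_1,x_4\}$ and $\{x_2,x_3\}$ makes the resulting matrix block diagonal, and its determinant comes out, up to sign, as $\xi^{3k}(1-\xi^k)(1-\xi^{3k})$. This is nonzero for $k=1,2,3,4$ since $5\nmid k$ and $5\nmid 3k$, so the five lines $\LQ^0_i$ are pairwise disjoint.

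For $\langle\LQ^\pm_0\rangle$ I would run the same recipe, spanning $\LQ^+_0$ by $(1:-1:-1:1)$ and $(-\goldenSection:\goldenSection^{-1}:0:1)$ (and the Galois conjugate for $\LQ^-_0$) and applying $\sigma^k$. After reducing exponents modulo $\xi^5=1$ and simplifying with $1+\xi+\xi^2+\xi^3+\xi^4=0$ and the fact that $\xi+\xi^{-1}$ and $\xi^2+\xi^{-2}$ are the two roots of $X^2+X-1=0$, namely $\goldenSection$ and $-\goldenSection^{-1}$ (so $\goldenSection\in\QQ(\xi)$), the $4\times 4$ determinant should collapse to a short expression in $\xi^k$ whose zeros among the primitive fifth roots single out exactly the two ``neighbours'' of $0$. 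This identifies the incidence graph on $\{\LQ^+_0,\dots,\LQ^+_4\}$ as a $5$-cycle, i.e.\ a pentagon, and the $\LQ^-$ orbit follows by the identical computation (or by applying the nontrivial automorphism of $\QQ(\sqrt{5})$, which swaps the two parametrizations). I expect this last determinant evaluation to be the main obstacle: a naive $4\times 4$ expansion is unwieldy, so the efficient route is to use the $\ZZ/5$-equivariance to write the determinant, up to a root of unity, as a symmetric function of $\xi^k$ and then factor it, after which the golden-section relations make the pattern transparent. Alternatively, all of these incidences can be checked directly by the Macaulay2 computation referenced in the paper.
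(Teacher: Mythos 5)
Your proposal is correct and matches the paper's approach, which is simply ``Calculation'' (delegated to the Macaulay2 scripts): you reduce the incidence pattern, via $\sigma$-equivariance, to eight $4\times 4$ determinant tests, and your explicit evaluation $\xi^{3k}(1-\xi^k)(1-\xi^{3k})\neq 0$ for the orbit $\langle\LQ^0_0\rangle$ checks out. The remaining determinant for the $\langle\LQ^\pm_0\rangle$ orbits is a finite computation of exactly the kind the paper performs by machine, so nothing essential is missing.
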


\begin{proof}
Calculation.
\end{proof}

\newcommand{\Fp}{\FF_{421}}

\begin{proposition}
The $\tau$-invariant line $(s:t) \mapsto (t:s:s:t)$ intersects $Q$ in $5$ points. Over $\Fp$ the coordinates of these
points are
\begin{align*}
     \PQ_1 &= (-33:  1:   1: -33), \\ 
     \PQ_2 &= (     1: -33: -33:   1),\\
     \PQ_3 &= (-50:   1:   1: -50), \\
     \PQ_4 &= (1: -50: -50:   1), \\
     \PQ_5 &= (1:  -1:  -1:   1). 
\end{align*}
In particular, we have $\PQ_5 \in \LQ^\pm_0$.
\end{proposition}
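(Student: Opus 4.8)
The plan is to reduce the whole statement to one explicit computation with the matrix $A$ of Section \ref{sConstruction}, specialized to the Barlow parameters $a_1=a_2=a_4=a_5=1$, $a_3=a_6=-4$ of Remark \ref{rClassical}. The $\tau$-invariance of the line $\ell\colon(s:t)\mapsto(t:s:s:t)$ is immediate, since $\tau(t:s:s:t)=(t:s:s:t)$; indeed $\ell$ is the $(+1)$-eigenline of $\tau$ acting on $\PP^3$. Substituting $(x_1,x_2,x_3,x_4)=(t,s,s,t)$ into $A$ produces a $5\times 5$ matrix whose entries are linear in $s,t$, and
\[
F(s,t):=\det A(t,s,s,t)\in\ZZ[s,t]
\]
is a binary form of degree $5$. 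A single evaluation shows $F\not\equiv 0$, so $\ell\not\subset Q$, and $\ell\cap Q$ is the length-$5$ subscheme of $\ell\cong\PP^1$ cut out by $F$.

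The next step is to factor $F$. A direct calculation, carried out in the Macaulay2 scripts \cite{BBKS12}, shows that $s+t\mid F$; the zero $(s:t)=(-1:1)$ of this factor maps under $(s:t)\mapsto(t:s:s:t)$ to the rational point $\PQ_5=(1:-1:-1:1)$. The four remaining zeros of $F$ have coordinates in a number field, so I would identify them after reduction modulo the prime $421$: over $\Fp$ the form $F$ splits into five pairwise distinct linear factors, namely $s+t$ and the four factors of the quartic cofactor $F/(s+t)$, and the zeros $(s:t)$ of the cofactor correspond under $(s:t)\mapsto(t:s:s:t)$ to exactly the four points $\PQ_1,\dots,\PQ_4$ displayed above --- for instance the zero $(s:t)=(1:-33)$ maps to $(t:s:s:t)=(-33:1:1:-33)=\PQ_1$. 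Since the discriminant of a binary form is a polynomial in its coefficients, separability of $F\bmod 421$ forces the discriminant of $F$ to be nonzero in $\ZZ$; therefore $F$ has five distinct roots over $\overline{\QQ}$ as well, so $\ell$ meets $Q$ in exactly $5$ points.

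Finally, the claim $\PQ_5\in\LQ^\pm_0$ is checked by setting $(s:t)=(1:0)$ in the parametrizations of the previous proposition: both $\LQ^+_0(1:0)$ and $\LQ^-_0(1:0)$ equal $(1:-1:-1:1)=\PQ_5$. The computation is entirely routine; the only points that require a little care are the non-vanishing of $F$ (so that $\ell\cap Q$ is genuinely zero-dimensional) and the passage from separability modulo $421$ to separability in characteristic $0$ --- legitimate, as just noted, because the discriminant of $F$ is nonzero modulo $421$ by the explicit factorization.
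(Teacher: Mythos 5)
Your proposal is correct and is essentially the computation the paper has in mind: the paper's proof is just the words ``Direct calculation,'' and what you describe --- substituting $(t:s:s:t)$ into $A$, factoring the resulting degree-$5$ binary form with the rational factor $s+t$ giving $\PQ_5$, identifying the remaining four roots over $\FF_{421}$, and checking $\PQ_5\in\LQ^\pm_0$ at $(s:t)=(1:0)$ --- is exactly that calculation made explicit. Your added remark that separability of the form modulo $421$ (via the discriminant) forces five distinct intersection points in characteristic $0$ is a correct and welcome precision that the paper leaves implicit.
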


\begin{proof}
Direct calculation.
\end{proof}

We recall the classification result for $\ZZ/5$-invariant elliptic quintics in $\PP^3$ due to Reid.

\begin{theorem}[\cite{Reid91}] \label{t:ellipticReid}
Let $E \subset \mathbb{P}^3$ be a $\ZZ/5$-invariant elliptic quintic curve not containing any coordinate points. Then
\begin{itemize}
\item the homogeneous ideal of E is generated by $5$ cubics of the form
\begin{align*}
R_0 &= a {x}_{1}^{2} {x}_{3}-b {x}_{1} {x}_{2}^{2}+c {x}_{3}^{2} {x}_{4}-d {x}_{2} {x}_{4}^{2} \\
R_1 &= a s {x}_{1} {x}_{2} {x}_{3}-a t {x}_{1}^{2} {x}_{4}-b s {x}_{2}^{3}-c t {x}_{3} {x}_{4}^{2}\\
R_2 &= a s {x}_{1} {x}_{3}^{2}-b s {x}_{2}^{2} {x}_{3}-b t {x}_{1} {x}_{2} {x}_{4}-d t {x}_{4}^{3}\\
R_3 &= a t {x}_{1}^{3}+c s {x}_{2} {x}_{3}^{2}+c t {x}_{1} {x}_{3} {x}_{4}-d s {x}_{2}^{2} {x}_{4}\\
R_4 &= b t {x}_{1}^{2} {x}_{2}+c s {x}_{3}^{3}-d s {x}_{2} {x}_{3} {x}_{4}+d t {x}_{1} {x}_{4}^{2},
\end{align*}
where $a,b,c,d$ are nonzero constants and $(s:t) \in \PP^1$. For $E$ to be nonsingular, we must have $\frac{tbc}{sad} \not\in \left\{0,\infty,\frac{-11 \pm 5\sqrt{5}}{2} = \left( \frac{-1 \pm \sqrt{5}}{2}\right)^5 \right\}$. The set of all $E$ is parametrised $1$-to-$1$ by $(s:t) \in \PP^1$ and the ratio $(a:b:c:d) \in \PP^3$.
\item The vector space of $\ZZ/5$-invariant quintic forms vanishing on $E$ has a basis consisting of the 7 elements
\[
	x_1^2R_3, 
	x_2^2R_1,
	x_3^2R_4,
	x_4^2R_2,
	x_1x_4R_0,
	x_2x_3R_0,
	x_3x_4R_3.
\]
\end{itemize}
\end{theorem}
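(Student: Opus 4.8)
The plan is to argue $\ZZ/5$-equivariantly throughout, decomposing every graded piece into its weight spaces modulo $5$ (where $x_i$ has weight $i$). First I would record the numerology of $E$. An elliptic quintic in $\PP^3$ lies on no quadric surface, so $H^0(\mathcal{I}_E(2)) = 0$; combining this with $h^0(\sO_E(d)) = 5d$ and $h^1(\sO_E(d)) = 0$ for $d \ge 1$ (Riemann--Roch on $E$) and the cohomology sequence of $0 \to \mathcal{I}_E(d) \to \sO_{\PP^3}(d) \to \sO_E(d) \to 0$ shows that $\mathcal{I}_E$ is $3$-regular (Castelnuovo--Mumford). Hence $I_E$ is generated in degree $3$, $E$ is $d$-normal for $d \ge 2$, and $h^0(\mathcal{I}_E(d)) = \binom{d+3}{3} - 5d$ there; in particular $\dim(I_E)_3 = 20 - 15 = 5$, while the space of $\ZZ/5$-invariant quintics through $E$ has dimension $12 - 5 = 7$ ($12$ being the number of weight-$0$ degree-$5$ monomials and $5$ the multiplicity of weight $0$ in $H^0(\sO_E(5))$, see below).

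Second, I would fix the weight structure. Since $E$ avoids the coordinate points, $\ZZ/5$ has no fixed point on $E$, hence acts by translation by a $5$-torsion point; so $\sO_E(1)$ — being $\ZZ/5$-invariant of degree $5$ — descends, after a choice of linearization, to a degree-$1$ line bundle on the elliptic quotient $E/(\ZZ/5)$, which forces $H^0(\sO_E(d))$ to carry each weight $0,\dots,4$ with multiplicity exactly $d$. In particular $H^0(\sO_E(1)) = \langle x_1, x_2, x_3, x_4 \rangle \oplus \langle z \rangle$ with $z$ of weight $0$. On the other hand, a direct count shows the $20$ cubic monomials in $x_1,\dots,x_4$ split as $4,4,4,4,4$ among the five weights; comparing with the $3,3,3,3,3$ of $H^0(\sO_E(3))$, the kernel $(I_E)_3$ contains exactly one cubic $R_j$, up to scalar, of each weight $j$, and $R_j$ is a linear combination of the four weight-$j$ monomials. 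This already pins $R_0 = a\,x_1^2 x_3 - b\,x_1 x_2^2 + c\,x_3^2 x_4 - d\,x_2 x_4^2$ down to four constants.

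The substantial step, which I expect to be the main obstacle, is to express $R_1,\dots,R_4$ through $a,b,c,d$ and exhibit the remaining ratio $(s:t)$. Writing each $R_j$ with indeterminate coefficients, one must impose that the five cubics $R_0,\dots,R_4$ cut out a curve (equivalently, that the ideal they generate has the Hilbert function of $E$), a system of equations on the coefficients. The most transparent way to solve it is to restore the weight-$0$ section $z$ and pass to the elliptic normal quintic $\tildeE \subset \PP^4$ with coordinates $z,x_1,\dots,x_4$ of weights $0,1,2,3,4$: as a projectively normal elliptic curve it is arithmetically Gorenstein with $\omega_{\tildeE} \cong \sO_{\tildeE}$, so by the Buchsbaum--Eisenbud structure theorem $I_{\tildeE}$ is generated by the five $4 \times 4$ Pfaffians of a $\ZZ/5$-equivariant $5 \times 5$ skew-symmetric matrix of linear forms (the diagonal analogue of the Horrocks--Mumford/Klein configuration), whose equivariant normal form is parametrized exactly by $(a:b:c:d) \in \PP^3$ and $(s:t) \in \PP^1$; recovering $E$ by projecting $\tildeE$ from $[1:0:0:0:0]$ and eliminating $z$ from the quadratic Pfaffian relations then produces $R_1,\dots,R_4$ in the stated form. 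Smoothness is the non-vanishing of the discriminant of this family: $E$ acquires a node — equivalently the projection centre meets a secant line of $\tildeE$, equivalently $E$ passes through a coordinate point — exactly when $tbc/(sad) \in \{0, \infty, ((-1\pm\sqrt5)/2)^5\}$, and the assignment $((s:t),(a:b:c:d)) \mapsto E$ is one-to-one because distinct equivariant normal forms give distinct curves. Getting this step right, in particular the exceptional ratios $((-1\pm\sqrt5)/2)^5 = (-11\pm5\sqrt5)/2$, is the delicate part; in practice one settles it by an explicit computation with the equivariant skew matrix.

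Finally, for the second assertion I would use $\dim H^0(\mathcal{I}_E(5))^{\ZZ/5} = 7$ from the first paragraph. Since $E$ is cut out by the cubics $R_j$, the degree-$5$ part of $I_E$ is spanned by the products $q \cdot R_j$ with $q$ a quadric, and its weight-$0$ part by those with $q$ of weight $-j$; there are two such quadrics for each $j$, hence ten products, subject to exactly $10 - 7 = 3$ linear relations — the weight-$0$ part of the linear syzygies of $R_0,\dots,R_4$ read off from the skew matrix. It then remains to verify that the seven products $x_1^2 R_3,\ x_2^2 R_1,\ x_3^2 R_4,\ x_4^2 R_2,\ x_1 x_4 R_0,\ x_2 x_3 R_0,\ x_3 x_4 R_3$ are linearly independent, which gives the asserted basis.
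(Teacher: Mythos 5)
First, a point of reference: the paper does not prove this statement at all --- it is quoted verbatim from \cite{Reid91} and used as a black box, so there is no in-paper argument to compare yours against; the comparison has to be with what a self-contained proof would require. Your opening and closing steps are correct and well argued: a smooth elliptic quintic lies on no quadric, so $\mathcal{I}_E$ is $3$-regular and $(I_E)_3$ is $5$-dimensional; the absence of coordinate points forces the $\ZZ/5$-action on $E$ to be free, so $H^0(\sO_E(d))$ carries each character with multiplicity $d$, and comparing with the weight decomposition of the cubic monomials (four of each weight) gives exactly one generator $R_j$ of each weight, pinning down the shape of $R_0$; and the count $12-5=7$ for the invariant quintics, together with the fact that $(I_E)_5$ is spanned by the ten products $q\cdot R_j$ with $q$ a quadric of weight $-j$, is the right frame for the second bullet.

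The genuine gap is the middle step, which is where the actual content of the theorem lives: the explicit coefficients of $R_1,\dots,R_4$ in terms of $a,b,c,d,s,t$, the injectivity of the parametrization, and the exceptional values $\frac{-11\pm5\sqrt5}{2}$ are all deferred to ``an explicit computation with the equivariant skew matrix'' that is never set up, so nothing is actually derived. (A more economical route to the coefficients, by the way, is to stay in $\PP^3$: the minimal resolution $0\to\sO(-5)\to\sO(-4)^5\to\sO(-3)^5\to\mathcal{I}_E\to0$ gives one linear syzygy of each weight, i.e.\ for each weight $w$ the four quartics $x_iR_{w-i}$ span only a $3$-dimensional space, and these five relations propagate the coefficients of $R_0$ into $R_1,\dots,R_4$ up to the single extra parameter $(s:t)$.) Moreover, the one piece of the smoothness analysis you do commit to is wrong: a node of the projected curve need not sit at a coordinate point. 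Since the centre of projection is a fixed point of $\ZZ/5$, the singular locus of the image is a union of orbits, and the degenerations occurring at $\frac{tbc}{sad}=\bigl(\frac{-1\pm\sqrt5}{2}\bigr)^5$ are precisely the $\ZZ/5$-orbits of lines --- the pentagons appearing later in this very paper as $\EQ_5^{\pm}=\langle\LQ_0^{\pm}\rangle$ --- which are singular along a free orbit of five vertices, none of them a coordinate point. So the chain ``nodal $\Leftrightarrow$ centre on a secant $\Leftrightarrow$ $E$ contains a coordinate point'' would not survive the computation you are postponing. Finally, the linear independence of the seven listed quintics (equivalently, that the three omitted products $x_2x_4R_4$, $x_1x_2R_2$, $x_1x_3R_1$ account for all three relations) still has to be checked against the explicit syzygies.
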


From this one gets

\begin{proposition}\xlabel{pEllipticCurves}
$Q$ contains exactly $8$ $D_{10}$-invariant elliptic quintic curves. Their coordinates over $\Fp$ in Reid's parameter space are
\begin{align*}
     e_1 &= (  -1:  33:  -33:   1,  1:  202),\\
     e_2 &= ( -33:   1:   -1:  33, 202:   -1),\\
     e_3^+ &= (  -1:  50:  -50:  1,    1:  133),\\
     e_3^- &= (  -1:  50:  -50:  1,    1: -108),\\
     e_4^+ &= ( -50:   1:   -1: 50,  133:   -1),\\
     e_4^- &= ( -50:   1:   -1: 50, -108:   -1),\\
     e_5^+ &= (  -1:   1:   -1:   1,   1:  126),\\
     e_5^- &= (  -1:   1:   -1:   1, 126:   -1).
\end{align*}
We denote by $\EQ_i^\pm$ the elliptic quintic curve corresponding to $e_i^\pm$. We have $\PQ_i \in \EQ_j^\pm$ if
and only if $i=j$. Furthermore $\EQ_5^\pm = \langle \LQ^\pm_0 \rangle$ are the two pentagons.
\end{proposition}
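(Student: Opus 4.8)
The plan is to intersect Reid's classification (Theorem~\ref{t:ellipticReid}) with the determinantal condition and to finish with a finite, exact computation. First note that $Q$ contains none of the four coordinate points: evaluating $\det A$ at each of them gives a nonzero monomial in $a_1,\dots,a_6$ (at $(1:0:0:0)$, for instance, one gets $a_1^2a_5^2a_6$), which is nonzero for the parameters at hand, in particular for those of Remark~\ref{rClassical}. Hence by Theorem~\ref{t:ellipticReid} every $\ZZ/5$-invariant elliptic quintic on $Q$ equals $E_{(s:t),(a:b:c:d)}$ for a unique point of $\PP^1\times\PP^3$, and the $\ZZ/5$-invariant quintic forms in its ideal constitute the explicit $7$-dimensional space $W_E=\langle x_1^2R_3,x_2^2R_1,x_3^2R_4,x_4^2R_2,x_1x_4R_0,x_2x_3R_0,x_3x_4R_3\rangle$. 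Now $\det A$ is a $D_{10}$-invariant, in particular $\ZZ/5$-invariant, quintic, and the full space $V$ of $\ZZ/5$-invariant quintic forms in $x_1,\dots,x_4$ has dimension $\tfrac15(56+1+1+1+1)=12$; therefore the condition $E\subset Q$, i.e.\ $\det A\in W_E$, is $12-7=5$ bihomogeneous conditions on $((s:t),(a:b:c:d))$ and defines a subscheme $Z\subset\PP^1\times\PP^3$. Since $Q$ is $\tau$-invariant, $\tau$ acts on $Z$, and a $D_{10}$-invariant elliptic quintic on $Q$ is exactly a $\tau$-fixed point of $Z$; imposing the linear relations that encode $\tau$-invariance of $E_{(s:t),(a:b:c:d)}$ leaves a zero-dimensional scheme $Z^\tau$ whose length we must show is $8$.

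The count I would settle by a Gr\"obner basis computation. Reducing the equations of $Z^\tau$ modulo $p=421$ (a prime for which $5$ is a square, so that the golden section is available), one checks with the scripts of \cite{BBKS12} that the coordinate ring of $Z^\tau$ is an $8$-dimensional $\Fp$-vector space, the eight geometric points being precisely $e_1,e_2,e_3^\pm,e_4^\pm,e_5^\pm$. To pass to $\CC$, one verifies that $p=421$ is a prime of good reduction for the ideal of $Z^\tau$ --- the Hilbert function does not jump --- so that the length of $Z^\tau$ over $\overline{\QQ}$ is at most $8$; since the eight curves are all defined over $\QQ(\goldenSection)$ they yield $8$ distinct $\overline{\QQ}$-points, forcing equality and proving the first assertion. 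The incidences $\PQ_i\in\EQ^\pm_j\Leftrightarrow i=j$ then follow by substituting the coordinates of the $\PQ_i$ and of the $\EQ^\pm_j$ into the cubics $R_0,\dots,R_4$ of Theorem~\ref{t:ellipticReid}; this is done over $\Fp$ and transported to $\CC$ through the same $\QQ(\goldenSection)$-models.

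Finally I would identify the pentagons. Each of $\langle\LQ^\pm_0\rangle$ is a cycle of five lines, hence a connected curve of degree $5$ and arithmetic genus $1$ spanning $\PP^3$, and it avoids the coordinate points and is $D_{10}$-invariant (short direct checks); so it is a nodal member of Reid's family and therefore one of the eight curves just counted. By the preceding proposition $\PQ_5\in\LQ^\pm_0$, and by the incidence rule a member of the eight through $\PQ_5$ must be $\EQ^\pm_5$; hence $\EQ^\pm_5=\langle\LQ^\pm_0\rangle$. The main obstacle is the descent from the exact $\Fp$-computation to the count over $\CC$: one must certify that $421$ is a prime of good reduction, so that no solutions collapse and the Hilbert function is unchanged, and then independently exhibit enough genuine $\overline{\QQ}$-solutions --- the $\QQ(\goldenSection)$-lifts --- to saturate the upper bound $8$; the bookkeeping that the two pentagons really sit inside Reid's parametrized family (coordinate points, non-degeneracy) is the secondary point to handle with care.
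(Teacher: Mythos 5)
Your proposal follows essentially the same route as the paper: translate the condition $E\subset Q$ into equations on Reid's parameter space $\PP^1\times\PP^3$, determine the finite solution scheme by an exact Gr\"obner-basis computation over $\Fp$ together with a characteristic-zero verification (cf.\ Remark \ref{rFieldDefinition}), and check the incidences with the $\PQ_i$ and the identification of the pentagons by substitution. The only cosmetic difference is that you impose $\tau$-invariance before solving, whereas the paper computes all $\ZZ/5$-invariant elliptic quintics on $Q$ (a scheme of degree $10$ with two doubled points) and observes afterwards that all eight solutions are automatically $\tau$-invariant.
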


\begin{proof}
Using Reid's setup we calculate the ideal of points on $\PP^3 \times \PP^1$ 
parametrizing $\ZZ/5\ZZ$-invariant elliptic quintic curves in $Q$. It turns out that this ideal
has degree $10$ and two solution points appear with multiplicity $2$. Over $\Fp$ we
obtain the same degrees and check that the above points are in the solution set by substitution.
From the form of the solutions we see that the $\EQ^\pm_i$ are also $\tau$-invariant. 
\end{proof}

\begin{remark}
The points $e_1$ and $e_2$ appear with multiplicity $2$ on Reid's parameter space.
\end{remark}

\begin{remark}\xlabel{rFieldDefinition}
The elliptic curves constructed in Proposition \ref{pEllipticCurves} are reductions of elliptic curves in characteristic $0$ since a calculation over $\mathbb{Q}$ shows that the number of such curves over $\CC$ is also $8$. 
\end{remark}

\begin{proposition}
The preimages of $\PQ_1$ and $\PQ_2$ on $Y$ are representatives of the branch locus of $p$.
Their coordinates on $Y$ are
\begin{align*}
          P_1^+ &= ( -33:   1:   1: -33: 0: -181:   53:  -53:  181), \\ 
	  P_1^- &= ( -33:   1:   1: -33: 0:  181:  -53:   53: -181), \\ 
     	  P_2^+ &= (   1: -33: -33:   1: 0:   53:  181: -181:  -53), \\
    	  P_2^- &= (   1: -33: -33:   1: 0:  -53: -181:  181:   53).
\end{align*}
\end{proposition}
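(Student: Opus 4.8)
The final statement to be proved is the Proposition computing the coordinates on $Y$ of the preimages $P_1^\pm$, $P_2^\pm$ of the points $\PQ_1$ and $\PQ_2$, and asserting that these are representatives of the branch locus of the map $p\colon Y \to X$ (equivalently, that the $\PQ_i$ give the branch locus of the double cover $\varphi_{K_Y}\colon Y \to Q$, or rather the relevant sub-double-cover data).

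\textbf{Plan of proof.} The strategy is entirely computational, in the spirit of the preceding propositions in this section (each of which is proved by ``Direct calculation'' with the Macaulay2 scripts of \cite{BBKS12}). First I would recall that $Y = \mathrm{Proj}(R) \subset \PP(1^4, 2^5)$ is cut out by the $5$ cubic relations $\sum_j A_{ij} y_j = 0$ and the $15$ quartic relations $y_i y_j = B_{ij}$, where $B_{ij}$ is the adjoint of the invariant matrix $A$. Over the chosen finite field $\Fp = \FF_{421}$, and with the specific values of the parameters $a_1,\dots,a_6$ defining $S$ (Remark \ref{rClassical}), this is a finite explicit system. Given a point $q = (x_1:x_2:x_3:x_4) \in Q$ — here $q = \PQ_1$ or $\PQ_2$ — the fibre of $\varphi_{K_Y}$ over $q$ consists of the points $(x_1:\dots:x_4:y_0:\dots:y_4)$ with the $y_j$ solving the linear system $A(x) \cdot y = 0$ together with the quadratic constraints $y_i y_j = B_{ij}(x)$. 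Since $q$ is a smooth point of $Q$ (it is not one of the $20$ nodes — one checks $\PQ_1, \PQ_2$ avoid the node locus), the matrix $A(x)$ has rank $4$, so the kernel is $1$-dimensional, say spanned by $y^{(0)}$, and the quadratic equations then fix the two square-roots, yielding exactly the two preimages $P_i^+$ and $P_i^-$ interchanged by $\iota$ (recall $\iota(y_j) = -y_j$). Plugging in the stated coordinate vectors and verifying they satisfy all $20$ defining relations of $Y$ over $\Fp$ is then a finite substitution check; the sign pattern $(P_i^+ \leftrightarrow P_i^-$ under $y \mapsto -y)$ confirms they form an $\iota$-orbit.

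Second, I would identify the branch locus of $p = \varphi_{K_Y} \circ (\text{stuff})$. The relevant point is that the $D_{10}$-twisted quotient $p\colon Y \to X = Y/\langle \sigma, (\tau,\iota)\rangle$ has branch behaviour governed by the fixed loci of the elements of the group. The free part is the $\sigma$-action; the ramification comes from $(\tau,\iota)$ (and its conjugates under $\sigma$), whose fixed points on $Y$ map to the four nodes of $X$ (cf. Remark \ref{rBasicRelations}(2), which describes the local model $\mathbb{A}^2 \to \mathrm{cone}$). So I would show that the fixed points of $(\tau, \iota)$ on $Y$ are precisely the $\sigma$-orbits of $P_1^\pm$ and $P_2^\pm$: a point $(x:y)$ is fixed by $(\tau,\iota)$ iff $x_i = x_{-i}$ (so $(x_1,x_2,x_3,x_4) = (x_1, x_2, x_3, x_1)$ with a symmetry forcing it onto the $\tau$-invariant line $(t:s:s:t)$) and $y_i = -y_{-i}$. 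The $\tau$-invariant line meets $Q$ in the five points $\PQ_1, \dots, \PQ_5$ by the earlier Proposition; of these, $\PQ_5 \in \LQ^\pm_0$ lies on the pentagons and has to be excluded (its preimage behaves differently — it sits over the special locus), and $\PQ_3, \PQ_4$ I would check do \emph{not} lift to $(\tau,\iota)$-fixed points with the required $y_i = -y_{-i}$ antisymmetry (or lie over nodes), leaving exactly $\PQ_1, \PQ_2$. Then over each of $\PQ_1, \PQ_2$ the fibre is the $\iota$-orbit $\{P_i^+, P_i^-\}$, giving the branch representatives as claimed (the other ramification points being their $\sigma$-translates).

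\textbf{Main obstacle.} The conceptual content is light; the real work is bookkeeping. The main obstacle is making sure the ``branch locus of $p$'' is correctly pinned down: one must be careful that $p$ factors as $Y \to V \to X$ (see the diagram), that the branching of $V \to X$ is along the four $A_1$-points pulled back appropriately, and that the $\sigma$-action is genuinely free on the relevant locus so that the branch data of $p$ is exactly the $\iota$-orbits of the $(\tau,\iota)$-fixed points — and that there are no extra fixed points coming from elements like $\sigma^k(\tau,\iota)$ that were overlooked. Checking that $\PQ_1, \PQ_2$ are smooth points of $Q$ (not among the $20$ nodes) and that $\PQ_3, \PQ_4, \PQ_5$ are genuinely excluded requires running the intersection of the $\tau$-invariant line with the node scheme and with $Q$ over $\Fp$; lifting from characteristic $p$ to characteristic $0$ is justified as in Remark \ref{rFieldDefinition}, by checking that the relevant degrees and the dimension of the solution scheme are the same over $\QQ$. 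Once these verifications are in place, the stated coordinates are confirmed by a one-line substitution into the $20$ equations of $Y$.
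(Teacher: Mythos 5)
Your proposal is correct and matches the paper's approach: the paper's proof is literally the single word ``Computation,'' carried out with the Macaulay2 scripts of \cite{BBKS12}, and your plan (verify the stated coordinates against the $20$ defining relations of $Y$ over $\FF_{421}$, and identify the ramification locus of $p$ as the $\sigma$-orbits of the fixed points of $\beta=\iota\circ\tau$, which forces $x_i=x_{-i}$, $y_0=0$, $y_i=-y_{-i}$ and singles out $\PQ_1,\PQ_2$ among the five intersection points of the $\tau$-invariant line with $Q$) is exactly that computation spelled out. The only slip is the parenthetical ``$p=\varphi_{K_Y}\circ(\text{stuff})$'' --- $p$ is the quotient by $D_{10}=\langle\sigma,\beta\rangle$ and does not factor through the quotient by $\iota$ --- but you immediately use the correct description of $p$, so this does not affect the argument.
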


\begin{proof}
Computation.
\end{proof}

\begin{notation}
We now pull back the curves constructed so far to $Y$ and denote them by $E_i^\pm$ and $\langle L \rangle$. 
Since they are $D_{10}$-invariant, they descend
to $X$. We then denote their strict transforms in $\tildeX$ by $\tildeL$ and $\tildeE_i^\pm$. The nodes of $X$ are
at the images of $P_i^\pm$, $i=1,2$. We denote their preimages on $\tildeX$ by
$\tildeC^\pm_i$. The whole configuration of the elliptic curves and the $(-2)$-curves on $\tildeX$ is visualized in Figure \ref{fConfiguration}. 
\end{notation}

\begin{lemma}\xlabel{lIntersection}
Let $\tildeD_1$, $\tildeD_2$ be two irreducible effective divisors on the Barlow surface $\tildeX$. Let $I_j$ be the ideal of $D_j= \tilde{\gamma} (\tilde{p}^{\ast} (\tildeD_j))$ on $Y$.  Put $I = I_1 + I_2$. We distinguish several cases. 

\begin{enumerate}
\item
$V(I)$ is empty. Then $\tildeD_1.\tildeD_2 = 0$.
\item $V(I_i)$ are ramification points. Then $\tildeD_1.\tildeD_2 = (-2) \degree V(I) / 5.$
\item $V(I_1)$ are ramification points and $V(I_2)$ is a curve which is smooth in all ramification points, or vice versa. Then $\tildeD_1.\tildeD_2 = \degree V(I) / 5.$ 
\item $V(I_1)$ and $V(I_2)$ are curves which are smooth in all ramification points and $V(I)$ is finite. Let $I_r$ be the ideal of the ramification locus of $p$. Then 
\[
\tildeD_1.\tildeD_2 = \frac{\deg I - \deg (I + I_r)}{10} .
\]
\item We have \[ \tildeK . \tildeD_1 = \frac{ \deg (I_1 + (x_1))}{10} . \]   
\item
$V(I_1)=V(I_2)=V(I)=D =D_1=D_2$ is a curve. Then \[
\tildeD^2 = \frac{2p_a (D) - 2 - \deg V(I + I_r) - \deg V(I) }{10}
\]
\end{enumerate}
\end{lemma}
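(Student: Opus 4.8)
\emph{Proof idea.}
The plan is to push every intersection number on $\tildeX$ up to the smooth surface $Y$, where the intersection of two curves with no common component is just the colength $\degree V(I_1+I_2)=\degree V(I)$. Three ingredients are used. (a) $\tilde p\colon\widetilde{Y}\to\tildeX$ is a finite morphism of degree $10$ of smooth projective surfaces, hence $\tilde p^{\ast}\alpha\cdot\tilde p^{\ast}\beta=10\,(\alpha\cdot\beta)$ for divisor classes on $\tildeX$; in particular $\tildeD_1\cdot\tildeD_2=\tfrac1{10}(\tilde p^{\ast}\tildeD_1\cdot\tilde p^{\ast}\tildeD_2)$ and $\tildeK\cdot\tildeD_1=\tfrac1{10}(\tilde p^{\ast}\tildeK\cdot\tilde p^{\ast}\tildeD_1)$. (b) $\tilde\gamma\colon\widetilde{Y}\to Y$ is the blow-up of $Y$ at the $20$ ramification points of $p$, which form four $\langle\sigma\rangle$-orbits of length $5$ lying over the four nodes of $X$; writing $E_1,\dots,E_{20}$ for the exceptional curves and using the local model $\mathbb{A}^2\to\mathbb{A}^2/(\ZZ/2)$ of Remark~\ref{rBasicRelations}(2) (whose blow-up is the double cover $\mathrm{Bl}_0\mathbb{A}^2\to\mathrm{Bl}_0(\mathrm{cone})$, ramified exactly along the exceptional $\PP^1$), one gets $\tilde p^{\ast}(\tildeC_i^{\pm})=2\sum_kE_k$ (sum over the five $E_k$ above that node) and, for any irreducible $\tildeD$ on $\tildeX$ not among the four $(-2)$-curves, $\tilde p^{\ast}\tildeD=\tilde\gamma^{\ast}D-\sum_{P_k\in D}E_k$, where $D=\tilde\gamma(\tilde p^{\ast}\tildeD)$ has ideal $I$; under the hypotheses of (3),(4),(6) the curve $D$ is smooth at the ramification points it meets — being $D_{10}$-invariant, it is locally a line there — so the displayed multiplicities are all $1$ and $\tilde p^{\ast}\tildeD$ is the strict transform of $D$. (c) $\tilde p^{\ast}\tildeK=\tilde\gamma^{\ast}K_Y$, which follows from $p^{\ast}K_X=K_Y$, $\gamma^{\ast}K_X=\tildeK$ (Remark~\ref{rBasicRelations}(1)) and commutativity of the square formed by $\tilde p,\tilde\gamma,p,\gamma$.

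Granting these, each case reduces to a one-line computation using $\tilde\gamma^{\ast}\delta\cdot E_k=0$, $E_k\cdot E_l=-\delta_{kl}$, $\tilde\gamma^{\ast}\delta_1\cdot\tilde\gamma^{\ast}\delta_2=\delta_1\cdot\delta_2$ on $Y$, the projection formula $\tilde\gamma_{\ast}\tilde p^{\ast}\tildeD=D$, and $D_1\cdot D_2=\degree V(I)$ for curves on $Y$ with no common component. In (1), $V(I)=\emptyset$ forces $D_1\cap D_2=\emptyset$ with no shared ramification point, so the product is $0$. In (2), $\tilde p^{\ast}\tildeD_i=2\sum E_k$ over the respective node and the product is $4\cdot(-\#\{\text{common }E_k\})=-4\degree V(I)$ since the $D_i$ are the reduced ramification orbits, whence $-2\degree V(I)/5$. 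In (3), $\tilde p^{\ast}\tildeD_1=2\sum_kE_k$ and $\tilde p^{\ast}\tildeD_2=\tilde\gamma^{\ast}D_2-\sum_lE_l$, so the product is $2\,\#\{k:P_k\in D_2\}=2\degree V(I)$, i.e.\ $\degree V(I)/5$. In (4), $\tilde p^{\ast}\tildeD_j=\tilde\gamma^{\ast}D_j-\sum_kE_k$, so the product is $D_1\cdot D_2-\#\{k:P_k\in D_1\cap D_2\}=\degree V(I)-\degree V(I+I_r)$. In (5), $\tilde p^{\ast}\tildeK\cdot\tilde p^{\ast}\tildeD_1=\tilde\gamma^{\ast}K_Y\cdot\tilde p^{\ast}\tildeD_1=K_Y\cdot D_1$, and since $K_Y=\sO_Y(1)$ is cut out on $Y$ by $x_1$ this equals $\degree V(I_1+(x_1))$. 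In (6), $(\tilde p^{\ast}\tildeD)^2=D^2-\#\{k:P_k\in D\}=D^2-\degree V(I+I_r)$, while adjunction on $Y$ gives $D^2=2p_a(D)-2-K_Y\cdot D=2p_a(D)-2-\degree V(I)$, the last step because $\degree V(I)$ is the degree of the curve $D=V(I)$, which equals $K_Y\cdot D$. Dividing by $10$ yields the six formulas.

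The one genuinely delicate step is ingredient (b): one must check that $\widetilde{Y}$ is the blow-up of $Y$ along the ramification locus and fits into the commuting square, that $\tilde p$ is \'etale in codimension one with ramification divisor exactly $\sum_kE_k$ — which is what gives $\tilde p^{\ast}\tildeK=\tilde\gamma^{\ast}K_Y$ and $\tilde p^{\ast}(\tildeC_i^{\pm})=2\sum E_k$ — and that a $D_{10}$-invariant curve smooth at a ramification point pulls back under $\tilde p$ to its strict transform under $\tilde\gamma$, meeting each $E_k$ transversally at a single point. This last fact pins the correction coefficient to $1$ and makes the length count in (3),(4),(6) exact; it also implicitly requires that in (4) the curves $\tildeD_1,\tildeD_2$ are not simultaneously tangent at a common ramification point, which holds for the explicit curves to which the lemma is applied. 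Everything else is routine bookkeeping, and in practice the degrees $\degree V(\,\cdot\,)$ are computed with the Macaulay2 scripts of \cite{BBKS12}.
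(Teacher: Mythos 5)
Your proposal is correct and follows essentially the same route as the paper: push everything up to $\tilde{Y}$ via the degree-$10$ finite map $\tilde{p}$, write $\tilde{p}^{\ast}\tildeD_j$ as $\tilde{\gamma}^{\ast}D_j$ minus exceptional curves (with multiplicity $1$ by the smoothness hypothesis) or as $2\sum\hat{C}_{ij}$ for the $(-2)$-curves, and expand. The only cosmetic difference is in case (6), where you use adjunction on $Y$ plus the blow-up self-intersection formula while the paper uses adjunction on $\tildeX$ plus Hurwitz for $\tilde{p}^{\ast}\tildeD\to\tildeD$; these are equivalent, and your caveat about simultaneous tangency in (4) is unnecessary since the global blow-up intersection formula needs only the multiplicities $m_k=1$.
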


\newcommand{\hatC}{\hat{C}}

\begin{proof}
Consider the diagram

\[
\xymatrix{
Y \ar[d]^p & \tilde{Y} \ar[l]_{\tilde{\gamma}} \ar[d]^{\tilde{p}}  & \hatC^{\pm}_{i,j} \ar@{_{(}->}
[l] \\
X & \tildeX \ar[l]_{\gamma} & \ar@{_{(}->}
[l] \tildeC^{\pm}_i
}
\]
Here $\tildeC^{\pm}_i$ are the four $(-2)$-curves on $\tildeX$ and $\hatC^{\pm}_{i,j}$, $j=1, \dots, 5$, are the twenty $(-1)$-curves lying over them. 
Assertions (1) and (2) are clear. 

In case (3) we have $\tildeD_1$ is one of the $(-2)$-curves, and $\tildeD_2$ is a curve intersecting all $\tildeC^{\pm}_i$ transversely. The number in (3) counts the intersection number of the $(-2)$-curve with $\tildeD_2$.

For (4) we compute  
\begin{align*}
\deg V(I) 
&=D_1.D_2 \\
&= \tilde{\gamma}^{\ast}(D_1).\tilde{\gamma}^{\ast}(D_2) \\
  &= (\tilde{p}^{\ast}(\tildeD_1) + \sum \delta_{ij}^\pm \hatC_{ij}^\pm ) . (\tilde{p}^{\ast}(\tildeD_2) + \sum \epsilon_{ij}^\pm \hatC_{ij}^\pm) \\
    &=  \tilde{p}^{\ast}(\tildeD_1) \tilde{p}^{\ast}(\tildeD_2) + 2\sum \delta_{ij}^\pm \cdot \epsilon_{ij}^\pm  -  \sum \delta_{ij}^\pm \cdot \epsilon_{ij}^\pm \\
    &=  \tilde{p}^{\ast}(\tildeD_1) \tilde{p}^{\ast}(\tildeD_2) + \sum \delta_{ij}^\pm \cdot \epsilon_{ij}^\pm   \\
    &= 10\tildeD_1.\tildeD_2   + \sum \delta_{ij}^\pm \cdot \epsilon_{ij}^\pm 
\end{align*}
where $\delta_{ij}^{\pm} = 1$ or $0$ depending on whether $D_1$ passes through $\tilde{\gamma} ( \hatC_{ij}^\pm )$ or not, and analogously for $\epsilon_{ij}^{\pm}$. This proves (4).

For (5) note that the formula is correct for $\tildeD_1$ a $(-2)$-curve because $x_1=0$ contains none of the ramification points of $p$. If $\tildeD_1$ is not a $(-2)$-curve, then, since $\tilde{\gamma}^* (K_Y) = \tilde{p}^* (\tildeK )$, 
\begin{align*}
\tildeK .\tildeD_1 &= \frac{1}{10}\tilde{p}^* (\tildeK ).\tilde{p}^*(\tildeD_1)\\ 
                               &= \frac{1}{10}\tilde{\gamma}^* (K_Y). \tilde{\gamma}^* (D_1)\\ 
                               &= \frac{1}{10}K_Y. D_1\\  
                               &= \frac{1}{10}\deg V ((x_1)+ I_1) .
\end{align*}
The second equality holds because $\tilde{p}^*(\tildeD_1) $ is equal to $\tilde{\gamma}^* (D_1)$ up to exceptional divisors on which $\tilde{\gamma}^* (K_Y) $ is trivial. 

In (6), $\tildeD_1= \tildeD_2 =: \tildeD$. The genus formula for $\tildeD$ yields
\[
\tildeD^2 = 2 p_a (\tildeD ) - 2 - \tildeK . \tildeD . 
\] 
The Hurwitz formula gives 
\[
2p_a (D) - 2= 2 p_a (\tilde{p}^* (\tildeD )) - 2 = 10 (2 p_a (\tildeD ) - 2 ) + \deg V(I + I_r)
\]
since $D$ is smooth in the ramification points of $p$. 
It follows
\[
\tildeD^2 = \frac{2p_a (D) - 2 - \deg V(I + I_r) - \deg V(I) }{10}
\]
\end{proof}

\begin{proposition} \xlabel{pIntersectionD8effective}
The intersection matrix  of the curves  
\[
	\bigl\{
	\tildeE_1,\tildeE_2,\tildeE_3^+,\tildeE_3^-,\tildeE_4^+,\tildeE_4^-,\tildeE_5^+,\tildeE_5^-,
	\tildeL,\tildeK,\tildeC_1^+,\tildeC_1^-,\tildeC_2^+\tildeC_2^-
	\bigr\},
\]
where $\tildeK$ is the
canonical divisor on $\tildeX$, is
\[
\makeatletter\c@MaxMatrixCols=14\makeatother
\begin{pmatrix}
{-1} &       0 &       0 &       0 &       0 &       0 &       0 &       0 &       {3} &       1 &       1 &       1 &       0 &       0\\
      0 &       {-1} &       0 &       0 &       0 &       0 &       0 &       0 &       {3} &       1 &       0 &       0 &       1 &       1\\
      0 &       0 &       {-1} &       1 &       0 &       0 &       0 &       0 &       {3} &       1 &       0 &       0 &       0 &       0\\
      0 &       0 &       1 &       {-1} &       0 &       0 &       0 &       0 &       {3} &       1 &       0 &       0 &       0 &       0\\
      0 &       0 &       0 &       0 &       {-1} &       1 &       0 &       0 &       {3} &       1 &       0 &       0 &       0 &       0\\
      0 &       0 &       0 &       0 &       1 &       {-1} &       0 &       0 &       {3} &       1 &       0 &       0 &       0 &       0\\
      0 &       0 &       0 &       0 &       0 &       0 &       {-1} &       1 &       {3} &       1 &       0 &       0 &       0 &       0\\
      0 &       0 &       0 &       0 &       0 &       0 &       1 &       {-1} &       {3} &       1 &       0 &       0 &       0 &       0\\
      {3} &       {3} &       {3} &       {3} &       {3} &       {3} &       {3} &       {3} &       {-3} &       1 &       0 &       0 &       0 &       0\\
      1 &       1 &       1 &       1 &       1 &       1 &       1 &       1 &       1 &       1 &       0 &       0 &       0 &       0\\
      1 &       0 &       0 &       0 &       0 &       0 &       0 &       0 &       0 &       0 &       {-2} &       0 &       0 &       0\\
      1 &       0 &       0 &       0 &       0 &       0 &       0 &       0 &       0 &       0 &       0 &       {-2} &       0 &       0\\
      0 &       1 &       0 &       0 &       0 &       0 &       0 &       0 &       0 &       0 &       0 &       0 &       {-2} &       0\\
      0 &       1 &       0 &       0 &       0 &       0 &       0 &       0 &       0 &       0 &       0 &       0 &       0 &       {-2}\\
      \end{pmatrix}
\]
The rank of this matrix is $9$. See also Figure \ref{fConfiguration}. 
\end{proposition}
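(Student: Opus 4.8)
The plan is to read off every entry of the matrix from Lemma~\ref{lIntersection}, applied to the appropriate pair of curves, together with the ideal-theoretic degree computations on $Y$ carried out in the Macaulay2 scripts of \cite{BBKS12} (which also produce the defining ideals of the $E_i^{\pm}$, of $\langle L\rangle$, of the four $(-2)$-curves, and of the ramification ideal $I_r$ of $p$). The incidence pattern one ends up with is: among the eight elliptic quintics the only non-disjoint pairs are $(\tildeE_i^+,\tildeE_i^-)$ for $i=3,4,5$, meeting in one point each; each $\tildeE_i^{\pm}$ meets $\tildeL$ in three points and $\tildeK$ in one point; $\tildeL$ meets $\tildeK$ in one point and is disjoint from all four $(-2)$-curves; $\tildeE_1$ meets $\tildeC_1^+$ and $\tildeC_1^-$ in one point each, $\tildeE_2$ meets $\tildeC_2^+$ and $\tildeC_2^-$ in one point each, and there are no other incidences with the $(-2)$-curves; and the four $(-2)$-curves are pairwise disjoint. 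That these are the only incidences, and with the stated multiplicities, follows from Proposition~\ref{pEllipticCurves} (which records that $\PQ_i$ lies on $\EQ_j^{\pm}$ exactly when $i=j$, and that $P_1^{\pm},P_2^{\pm}$ are the points producing the four nodes of $X$) together with the degree computations.

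For the off-diagonal entries I would argue case by case. For a pair of distinct curves, neither a $(-2)$-curve, both smooth at all the ramification points of $p$ (a property one checks for the $E_i^{\pm}$ and for $L$), case (4) of Lemma~\ref{lIntersection} gives the entry as $\bigl(\deg(I_1+I_2)-\deg(I_1+I_2+I_r)\bigr)/10$; this handles all $\tildeE_i^{\pm}.\tildeE_j^{\pm}$ and $\tildeE_i^{\pm}.\tildeL$ entries. For $\tildeK.\tildeE_i^{\pm}$ and $\tildeK.\tildeL$, case (5) gives $\deg(I_1+(x_1))/10$, which equals $1$ in each case. For $\tildeC_j^{\pm}.\tildeE_i^{\pm}$ and $\tildeC_j^{\pm}.\tildeL$ one curve is a $(-2)$-curve, so we are in case (1) (when the preimages are disjoint, giving $0$) or case (3) (when they meet, giving $\deg V(I)/5=1$). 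For two distinct $(-2)$-curves, case (1) gives $0$. Each time the required degree is the one supplied by the scripts.

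For the diagonal, the four $(-2)$-curves have self-intersection $-2$ by construction, being the exceptional curves of the minimal resolution of the four ordinary double points of $X$, and $\tildeK^2=K_S^2=1$ by Remark~\ref{rInvariants}. For $\tildeE_i^{\pm}$ and $\tildeL$ I would use case (6) of Lemma~\ref{lIntersection}: with $D$ the corresponding curve on $Y$ and $p_a(D)$ its arithmetic genus (again extracted from the computation), $\tildeD^2=\bigl(2p_a(D)-2-\deg V(I+I_r)-\deg V(I)\bigr)/10$ yields $-1$ for each elliptic quintic and $-3$ for $\tildeL$; as an independent check, adjunction $\tildeD^2=2p_a(\tildeD)-2-\tildeK.\tildeD$ with $p_a(\tildeE_i^{\pm})=1$, $p_a(\tildeL)=0$ and the $\tildeK$-intersections found above reproduces the same values.

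Finally, for the rank: $b_2(\tildeX)=12\chi(\sO_{\tildeX})-K_{\tildeX}^2-2=9$ and all of $H^2(\tildeX,\ZZ)$ is algebraic (Remark~\ref{rInvariants}), so the intersection form of any collection of divisor classes on $\tildeX$ has rank at most $9$; on the other hand the $9\times 9$ submatrix on $\{\tildeE_3^+,\tildeE_4^+,\tildeE_5^+,\tildeL,\tildeK,\tildeC_1^+,\tildeC_1^-,\tildeC_2^+,\tildeC_2^-\}$ is block diagonal with a $(-2)$-diagonal block of determinant $16$ and a $5\times 5$ block of determinant $4$, hence has determinant $64\neq 0$, so the rank is exactly $9$. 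The substance of the argument, beyond these reductions, is all on the Macaulay2 side: producing the explicit ideals of the curves and of $I_r$ on $Y$, verifying the smoothness-at-the-ramification-points hypotheses that make the right cases of Lemma~\ref{lIntersection} applicable, and computing the degrees and arithmetic genera; the only subtle piece of bookkeeping is keeping track of which of the twenty $(-1)$-curves on $\tilde{Y}$ lie over which of the four $(-2)$-curves on $\tildeX$, since that is what fixes the incidences of the elliptic quintics with the $\tildeC_j^{\pm}$. The linear algebra for the rank is entirely routine.
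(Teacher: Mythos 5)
Your proposal follows the same route as the paper: verify that the $E_i^\pm$ and $\langle L\rangle$ are smooth at the ramification points of $p$, represent $\tildeK$ by $\{x_1=0\}$, and read off each entry from the appropriate case of Lemma~\ref{lIntersection} using the degree computations on $Y$ from the Macaulay2 scripts. Your case-by-case bookkeeping and the explicit rank argument (rank $\leq 9$ since $\mathrm{Pic}(\tildeX)\cong\ZZ^9$, rank $\geq 9$ from the nonsingular block-diagonal $9\times 9$ submatrix) are correct and in fact somewhat more detailed than the paper's one-line assertion.
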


\begin{proof}
We calculate the intersection numbers on $Y$. For this we first check that $E_i^\pm$, $\langle L \rangle$ are smooth in
the $P^\pm_i$. By the $D_{10}$-invariance of the orbits this shows that they are smooth in all branch points of $p$. We also represent
$K$ by the curve $\{x_1=0\}$ and set $C^\pm_i = \langle P^\pm_i \rangle$. The assertion follows by Lemma \ref{lIntersection}.
\end{proof}

\begin{figure}
\begin{center}
\includegraphics[height=90mm]{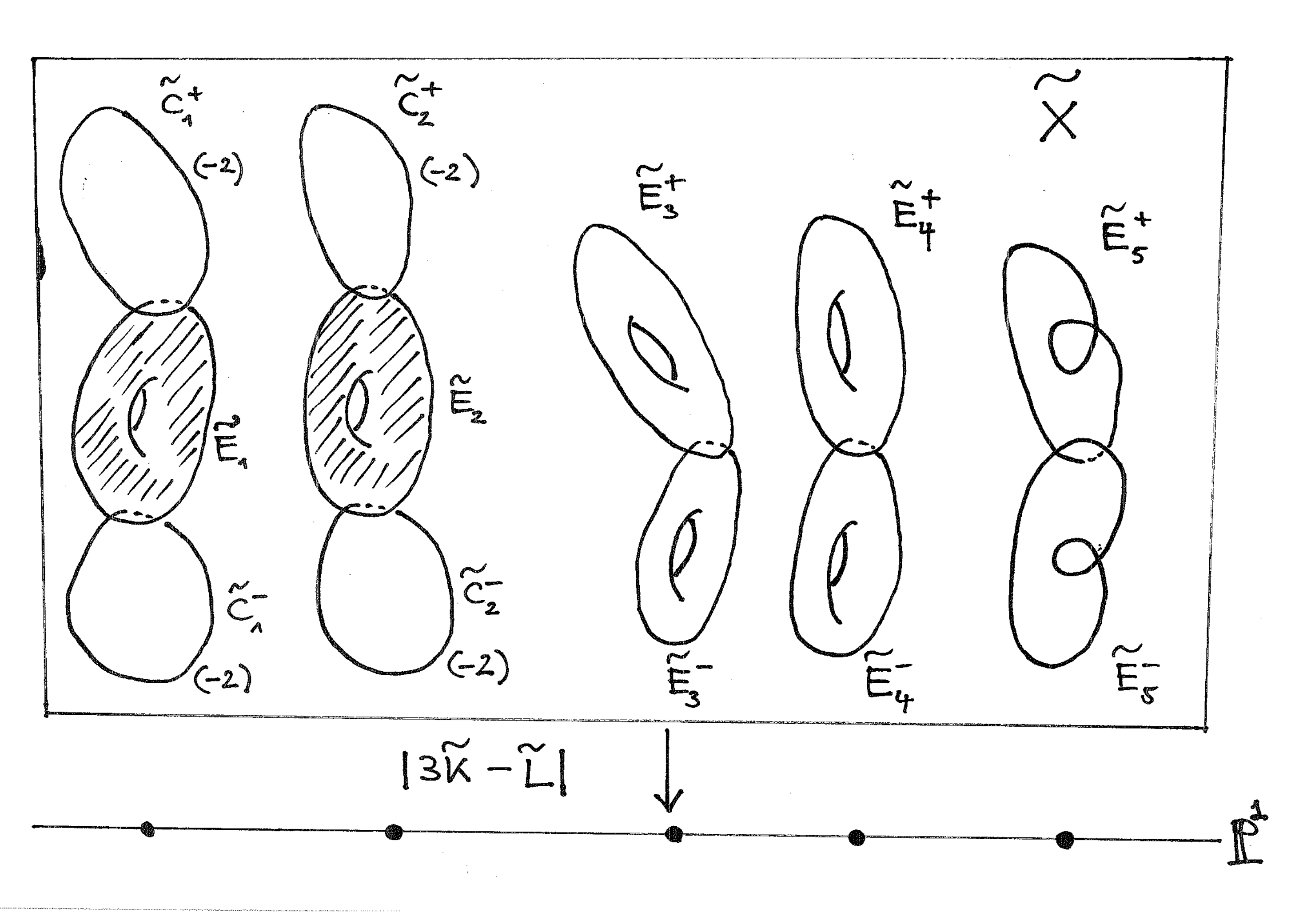}
\end{center}
\caption{The configuration of curves on the Barlow surface} \label{fConfiguration} 
\end{figure}

\begin{remark}\xlabel{rFiniteFieldIntersection}
The intersections are calculated over a finite field and may potentially be different from those in characteristic zero. The way the argument works is however the following: we produce eventually an exceptional sequence
\[
(\overline{\mathcal{L}}_1, \dots , \overline{\mathcal{L}}_{11})
\]
of line bundles on the reduction of the Barlow surface to finite characteristic. However, the $\overline{\mathcal{L}}_i$ themselves are reductions of line bundles $\mathcal{L}_i$ defined over an algebraic number field of characteristic $0$. Hence by upper-semicontinuity over $\mathrm{Spec}(\mathfrak{O})$, where $\mathfrak{O}$ is the ring of integers of this number field, the sequence
\[
( \mathcal{L}_1, \dots , \mathcal{L}_{11})
\]
will also be exceptional in characteristic $0$.
\end{remark}

\begin{remark}
We have $14$ effective possibly reducible elliptic curves of degree $1$ on $\tildeX$. These are
\[
	\{\tildeE_i,\tildeE_i+\tildeC_i^+,\tildeE_i+\tildeC_i^-,\tildeE_i+\tildeC_i^++\tildeC_i^-\}
\]
for $i=1,2$ and $\tildeE_j^\pm$ for $j=3,4,5$.
\end{remark}

\begin{proposition}
On $\tildeX$ we have the following roots (so far)
\begin{itemize}
	\item $\tildeF_i - \tildeF_j$ with $\tildeF_i.\tildeF_j = 0$ and $\tildeF_i,\tildeF_j$ possibly reducible elliptic curves of degree one.
	         (84 of these)
	\item $\pm \tildeK \mp \tildeF_i$ with $\tildeF_i$ as above (28 of these).
\end{itemize}
These $112$ roots form a $D_8$-root system. A $D_8$-basis is given, for example, by the simple roots
\begin{align*}
	\sD_1 &=  \tildeK-\tildeE_2-\tildeC_2^- \\
    	\sD_2 &= \tildeE_2+\tildeC_2^--\tildeE_2 \\
     	\sD_3 &= \tildeE_1 -\tildeE_2-\tildeC_2^+-\tildeC_2^-  \\
  	\sD_4 &= \tildeE_1+\tildeC_1^++\tildeC_1^- -\tildeE_3^- \\
   	\sD_5 &= \tildeE_3^--\tildeE_4^-\\
  	\sD_6 &= \tildeE_4^--\tildeE_5^-\\
  	\sD_7 &= \tildeE_1+\tildeC_1^--\tildeE_5^+\\
  	\sD_8 &= \tildeE_1+\tildeC_1^+-\tildeE_5^+
 \end{align*}

They have intersection matrix 
\[
\begin{pmatrix}{-2} &         1 &         0 &         0 &         0 &         0 &         0 &         0\\
        1 &         {-2} &         1 &         0 &         0 &         0 &         0 &         0\\
        0 &         1 &         {-2} &         1 &         0 &         0 &         0 &         0\\
        0 &         0 &         1 &         {-2} &         1 &         0 &         0 &         0\\
        0 &         0 &         0 &         1 &         {-2} &         1 &         0 &         0\\
        0 &         0 &         0 &         0 &         1 &         {-2} &         1 &         1\\
        0 &         0 &         0 &         0 &         0 &         1 &         {-2} &         0\\
        0 &         0 &         0 &         0 &         0 &         1 &         0 &         {-2}\\
        \end{pmatrix}
\]
\end{proposition}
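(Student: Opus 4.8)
The plan is to verify that the eight divisor classes $\sD_1,\dots,\sD_8$ written above do indeed lie in the root lattice spanned by the $112$ roots, that they are linearly independent, and that their pairwise intersection numbers reproduce the Dynkin diagram of $D_8$; once this is done, standard root-system theory (any eight linearly independent roots in a rank-$8$ root lattice whose Gram matrix is the $D_8$ Cartan matrix form a system of simple roots) finishes the proof. First I would confirm that each $\sD_k$ is of one of the two advertised shapes: $\sD_1$ and (after noticing that $\sD_2=\tildeE_2+\tildeC_2^--\tildeE_2=\tildeC_2^-$, i.e. the class $(\tildeE_2+\tildeC_2^-)-\tildeE_2$ of a difference of two degree-one elliptic curves — here $\tildeF_i=\tildeE_2+\tildeC_2^-$ and $\tildeF_j=\tildeE_2$) all the others are visibly differences $\tildeF_i-\tildeF_j$ of the $14$ effective degree-one elliptic curves listed in the preceding remark, except $\sD_1=\tildeK-\tildeE_2-\tildeC_2^-=\tildeK-(\tildeE_2+\tildeC_2^-)$ which is of the form $\tildeK-\tildeF_i$. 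So each $\sD_k$ is among the $112$ roots.

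Next I would compute the $8\times 8$ Gram matrix $(\sD_i.\sD_j)$ directly from the intersection data in Proposition \ref{pIntersectionD8effective}: every entry is a sum of a handful of the already-tabulated numbers $\tildeE_i.\tildeE_j$, $\tildeE_i.\tildeK$, $\tildeK^2=1$, $\tildeE_i.\tildeC_j^\pm$, $(\tildeC_j^\pm)^2=-2$, etc. One checks $\sD_k^2=-2$ for all $k$, $\sD_k.\sD_{k+1}=1$ for $k=1,\dots,6$, $\sD_6.\sD_8=1$, and that all the remaining off-diagonal entries vanish; this is exactly the displayed matrix, which is the (negative of the) $D_8$ Cartan matrix with the standard fork at the node $\sD_6$. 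Since that matrix is nondegenerate (determinant $4$ up to sign), the $\sD_k$ are linearly independent, hence span a finite-index sublattice of the rank-$8$ root lattice, and by the classification of root systems this sublattice is the full $D_8$ lattice with the $\sD_k$ as simple roots; the $112$ roots are then precisely its roots, which is the count $2\cdot\binom{8}{2}\cdot 2 = 112$ claimed (equivalently, $84+28$ as split above, matching the enumeration of the $\tildeF_i-\tildeF_j$ with $\tildeF_i.\tildeF_j=0$ and the $\pm\tildeK\mp\tildeF_i$).

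Finally I would double-check the two enumerative assertions feeding into this: that there are exactly $84$ roots of type $\tildeF_i-\tildeF_j$ and $28$ of type $\pm\tildeK\mp\tildeF_i$. The $14$ effective degree-one elliptic classes give $\binom{14}{2}\cdot 2$ ordered differences, but only those with $\tildeF_i.\tildeF_j=0$ have square $-2$; using the intersection numbers one sees this cuts the count down to $84$, and similarly $\tildeK.\tildeF_i=1$ for all $14$ forces $(\pm\tildeK\mp\tildeF_i)^2=-2$ with $28=2\cdot 14$ such classes, and these two families are disjoint and exhaust the norm-$(-2)$ vectors of the lattice they generate. The main obstacle is purely bookkeeping: keeping the reducible curves $\tildeE_i+\tildeC_i^\pm$ and $\tildeE_i+\tildeC_i^++\tildeC_i^-$ straight when extracting intersection numbers from the $14\times 14$ matrix (some entries, e.g. $\tildeE_1.(\tildeE_2+\tildeC_2^+)$, combine several rows/columns and a sign error there propagates), but there is no conceptual difficulty — it is a finite verification that the Gram matrix comes out as displayed, after which root-system theory does the rest. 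All intersection numbers used are the ones computed in Proposition \ref{pIntersectionD8effective} via Lemma \ref{lIntersection}, and the finite-characteristic caveat of Remark \ref{rFiniteFieldIntersection} applies verbatim.
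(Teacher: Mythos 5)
Your proposal is correct and follows essentially the same route as the paper, whose entire proof is the one-line statement that everything is a calculation using the intersection matrix of Proposition \ref{pIntersectionD8effective}. You simply spell out the bookkeeping (identifying each $\sD_k$ as one of the $112$ classes, computing the Gram matrix from the tabulated intersection numbers, and invoking standard root-lattice facts), which is exactly what the paper's "Calculation" entails.
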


\begin{proof}
Calculation using the above intersection matrix in Proposition \ref{pIntersectionD8effective}.
\end{proof}

\begin{remark}
All effective curves constructed so far can be written in this $D_8$-basis and $\tildeK$ using integer coefficients. 
\end{remark}

\begin{proposition}
On $\tildeX$ there exist $32$ curves $\tildeB^\pm_{ijk}$, $i,j \in \ZZ/2\ZZ$ and $k \in \{0,1,2,3\}$, of genus $3$ and canonical degree $2$ each intersecting two $(-2)$-curves, say
$\tildeF^\pm_{ijk}$ and $\tildeG^\pm_{ijk}$, such that 
\begin{align*}
	&2\tildeK - \tildeB^\pm_{ijk}, \\
	&2\tildeK - \tildeB^\pm_{ijk} - \tildeF^\pm_{ijk}, \\
	&2\tildeK - \tildeB^\pm_{ijk} - \tildeG^\pm_{ijk}, \\ 
	&2\tildeK - \tildeB^\pm_{ijk} - \tildeF^\pm_{ijk} - \tildeG^\pm_{ijk}
\end{align*}
represent $128$ additional roots. The total of $112+128=240$ roots forms an $E_8$-lattice.
\end{proposition}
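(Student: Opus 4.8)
The plan is to bootstrap from the degree-$2$ curve $\tildeB_0$ produced in the second step of this section, for which one checks via Lemma \ref{lIntersection}(5),(6) and the genus formula that $\tildeK\cdot\tildeB_0 = 2$, $\tildeB_0^2 = 2$, $p_a(\tildeB_0) = 3$. Let $B_0 = \tilde\gamma(\tilde p^{\ast}(\tildeB_0))\subset Y$ be its $D_{10}$-invariant preimage. The preparatory task is to write down generators of the ideal of $B_0$ in the canonical ring $R$ together with a form (or pair of forms) whose zero locus on $Y$ is $B_0$ together with a residual curve $B_0'$ --- the same kind of explicit computation carried out throughout this section and recorded in \cite{BBKS12}.

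First I would invoke linkage on $Y$: the curve $B_0'$ residual to $B_0$ in such a complete intersection is linked to $B_0$, and the standard liaison formulas express $\degree B_0'$ and $p_a(B_0')$ through those of $B_0$ and the degrees of the defining forms; a check shows the image $\tildeB_0'$ on $\tildeX$ again satisfies $\tildeK\cdot\tildeB_0' = 2$, $(\tildeB_0')^2 = 2$, $p_a(\tildeB_0') = 3$. Next, the automorphisms $\alpha,\tau,\iota$ of $Y$ normalise $D_{10}$ and therefore descend to automorphisms of $X$ and $\tildeX$ (Remark \ref{rOperation}); letting the group they generate act on $\tildeB_0$ and $\tildeB_0'$, together with the finitely many choices entering the linkage, produces a family $\tildeB^{\pm}_{ijk}$ with $i,j\in\ZZ/2\ZZ$, $k\in\{0,1,2,3\}$, of $32$ curves. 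For each I would verify --- over $\FF_{421}$, as everywhere in this section, by Lemma \ref{lIntersection}(3)--(6) --- that it is irreducible of genus $3$ and canonical degree $2$ with self-intersection $2$, and that it meets exactly two of the four $(-2)$-curves $\tildeC_1^{\pm},\tildeC_2^{\pm}$, transversally and in one point each; write these two as $\tildeF^{\pm}_{ijk}$ and $\tildeG^{\pm}_{ijk}$, so that $\tildeB^{\pm}_{ijk}\cdot\tildeF^{\pm}_{ijk} = \tildeB^{\pm}_{ijk}\cdot\tildeG^{\pm}_{ijk} = 1$, $\tildeF^{\pm}_{ijk}\cdot\tildeG^{\pm}_{ijk} = 0$, $\tildeK\cdot\tildeF^{\pm}_{ijk} = \tildeK\cdot\tildeG^{\pm}_{ijk} = 0$ and $(\tildeF^{\pm}_{ijk})^2 = (\tildeG^{\pm}_{ijk})^2 = -2$.

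The root computation is then uniform. Abbreviating $\tildeB,\tildeF,\tildeG$ for a triple $\tildeB^{\pm}_{ijk},\tildeF^{\pm}_{ijk},\tildeG^{\pm}_{ijk}$ and using $\tildeK^2 = 1$: one has $(2\tildeK-\tildeB)^2 = 4 - 8 + 2 = -2$; since $(2\tildeK-\tildeB)\cdot\tildeF = (2\tildeK-\tildeB)\cdot\tildeG = -1$ one gets $(2\tildeK-\tildeB-\tildeF)^2 = -2 - 2\cdot(-1) + (-2) = -2$, and the same with $\tildeG$; and $(2\tildeK-\tildeB-\tildeF-\tildeG)^2 = -2 - 2\cdot(-2) + (-4) = -2$. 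Each of these four classes has zero intersection with $\tildeK$, so each of the $32$ curves yields $4$ roots, $128$ altogether. That these $128$ classes are pairwise distinct and disjoint from the $112$ roots of the previous proposition is checked by writing them all in the chosen $D_8$-basis together with $\tildeK$ --- a finite bookkeeping step.

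Finally, $240$ is exactly the number of roots of $E_8$. Since $\tildeK^2 = 1$ (so $\tildeK$ is primitive), $H^2(\tildeX,\ZZ) = \mathrm{Pic}(\tildeX)$ is unimodular of signature $(1,8)$, and $x^2$ is even whenever $\tildeK\cdot x = 0$ (Wu's formula), the orthogonal complement $\tildeK^{\perp}\subset\mathrm{Pic}(\tildeX)$ is the negative-definite even unimodular rank-$8$ lattice, i.e.\ $-E_8$, whose set of square-$(-2)$ vectors is an $E_8$ root system of cardinality $240$ --- consistent with $\mathrm{Pic}(\tildeX) = \mathbf{1}\perp(-E_8)$ from Section \ref{sLatticeTheory}. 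The $240$ roots constructed above all lie in $\tildeK^{\perp}$ and have square $-2$; being $240$ distinct such vectors, they exhaust this root system and hence form an $E_8$-lattice. The only genuinely non-formal point, and the one I expect to be the main obstacle, is the linkage step together with the verification that all $32$ residual curves are irreducible of the asserted genus and canonical degree and meet precisely two $(-2)$-curves each; everything downstream is linear algebra in $\mathrm{Pic}(\tildeX)$. A secondary subtlety, exactly as in Remarks \ref{rFieldDefinition} and \ref{rFiniteFieldIntersection}, is that the intersection numbers are computed after reduction modulo $421$, so one must observe that the degrees of the relevant intersection ideals agree in characteristic $0$, whence the curves and the root relations lift.
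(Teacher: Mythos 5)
Your downstream machinery --- linkage, the action of $\iota$ and $\alpha$, computer verification of the intersection numbers over $\FF_{421}$, and the closing lattice-theoretic observation that $240$ distinct square-$(-2)$ classes in $\tildeK^{\perp}$ must constitute the full $E_8$ root system --- matches the paper's proof. The genuine gap is at the very start: you take the initial degree-$2$, genus-$3$ curve as already ``produced in the second step of this section'', but that construction is precisely part of the content of this proposition's proof, and it is the step requiring the real idea. The paper identifies which class to look for by running the Borel--Siebenthal algorithm on the $D_8$-basis of simple roots to obtain the highest root $e=\frac{1}{2}(d_1+2d_2+3d_3+4d_4+5d_5+6d_6+3d_7+4d_8)$ of the ambient $E_8$, and then proves effectiveness of $e+2\tildeK$ by Riemann--Roch: $\chi(e+2\tildeK)=1$ together with $H^2(e+2\tildeK)=H^0(-\tildeK-e)=0$ forces $H^0(e+2\tildeK)\neq 0$. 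Without this you have neither a candidate divisor class nor an existence proof, and your proposal offers no substitute for it.

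A second, more computational point you elide: the class of $B$ is \emph{not} in the sublattice of $\Pic\tildeX$ generated by the curves constructed so far (only $2B$ is), so one cannot simply ``write down generators of the ideal of $B_0$'' in the known coordinates. The paper instead represents $2B \equiv 8\tildeK-\tildeL-\tildeE_3^+-\tildeE_4^+-\tildeE_5^++\tildeC_1^+-\tildeC_2^+$, finds the unique $D_{10}$-invariant octic in the relevant ideal that is non-reduced along a curve outside the known components, and extracts the ideal of $B$ as $\rad(((F)+I(Y)):I)$. Your linkage and symmetry steps are otherwise in line with the paper's, which links via $5\tildeK-\tildeE-\tildeB$ through $\tildeE_3^+,\tildeE_4^+,\tildeE_5^+$ (giving the index $k$), via $4\tildeK-\tildeB$ (giving $\pm$), and sets $\tildeB^\pm_{ijk}=\iota^i(\alpha^j(\tildeB^\pm_{00k}))$; the concluding $E_8$ identification you give is correct and in fact slightly more explicit than what the paper records.
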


\begin{proof}
Let $d_1,\dots,d_8$ be a system of simple roots in a $D_8$-lattice. If this lattice is a sublattice of an $E_8$-lattice, the
Borel-Siebenthal algorithm (see e.g. \cite[13.2, p.\ 109]{MT}) gives the highest root in the $E_8$-lattice as
\[
	e = \frac{1}{2}(d_1 + 2d_2 +3d_3 +4d_4 + 5d_5 + 6d_6 + 3d_7 + 4d_8)
\]
and $\{e,d_8,\dots,d_2\}$ is a system of simple roots in the $E_8$-lattice.

Now observe that 
$$\chi(e+2\tildeK) = \frac{(e+2\tildeK).(e+\tildeK) + 2}{2} = \frac{-2+2+2}{2} = 1.$$ 
Since
$H^2(e+2\tildeK) = H^0(-\tildeK-e) = 0$, this implies that $H^0(e+2\tildeK) \not=0$. We will construct a curve
$B \in |e+2\tildeK|$. By construction $B$ is not in the subgroup of $\Pic \tildeX$ considered so far, but
$2B$ is, i.e. there exist a non-reduced curve in the linear system $|2B|$ whose support is $B$.
To represent $|2B|$ in a computer, we write
\[
	2B \equiv 8\tildeK-\tildeL-\tildeE_3^+ -\tildeE_4^+ -\tildeE_5^+ + \tildeC_1^+ -\tildeC_2^+
\]
and, therefore, consider $D_{10}$-invariant polynomials of degree $8$ that lie in the Ideal $I$
of $\langle L \rangle \cup E_3^+ \cup E_4^+  \cup E_5^+  \cup \langle P_1^+ \rangle$ in $Y$. A computation shows that
there is a $\PP^3$ of such polynomials. By restricting to lines we find the unique such polynomial $F$ that
is non-reduced on a curve outside of $L \cup E_3^+ \cup E_4^+  \cup E_5^+$. The ideal of the curve $B$
is then obtained as $\rad( ((F)+I(Y)) : I )$. It is $D_{10}$-invariant and hence descends to $X$. We denote its strict transform
by $\tildeB_{000}^+$. 

The intersection of $\tildeB_{000}^+$ with the effective curves of Proposition \ref{pIntersectionD8effective} can be calculated to be
\[
	\{2, 2, 3, 2, 3, 2, 3, 2, 1, 2, 0, 1, 1, 0 \} 
\]

Now let $\tildeE$ be an elliptic curve and $\tildeB$ be a genus $3$ curve of degree $2$ with $\tildeE.\tildeB = 3$ on $\tildeX$. 
Then we have
\[
	\chi(5\tildeK - \tildeE-\tildeB) = 1
\]
We thus have an effective curve $\tildeB'  \in |5\tildeK - \tildeE-\tildeB|$, and since
$\tildeB^2=2$, we have $g(\tildeB') = 3$ and $\tildeB'.\tildeK = 2$. We say that $\tildeB'$ is linked
via $5\tildeK$ to $\tildeB + \tildeE$. Performing this construction with $\tildeB^+_0$ and 
$\tildeE_3^+$,  $\tildeE_4^+$ and $\tildeE_5^+$, we obtain further genus $2$ curves $\tildeB^+_{001},\tildeB^+_{002},\tildeB^+_{003}$.
Observe that, furthermore,
\[
	\chi(4\tildeK - \tildeB) = 1
\]
and that for $\tildeB' \in |4\tildeK-\tildeB|$ we also have $g(\tildeB') = 3$ and $\tildeB'.\tildeK = 2$. 
Therefore we can link $\tildeB^+_{00k}$ via $4\tildeK$ to $\tildeB^-_{00k}$. Now we set
\[
	\tildeB^\pm_{ijk} = \iota^i(\alpha^j(\tildeB^\pm_{00k}))
\]
and obtain a total of $32$ curves. With a computer we can check that all of the
$32$ constructed curves are distinct and that each of them intersects exactly two $(-2)$ curves. 
\end{proof}

\begin{proposition}
We have the following intersections on $\tildeX$:
\[
	(\tildeB_{ijk}^\pm - 2\tildeK) . \tildeL = \mp1
\]
\end{proposition}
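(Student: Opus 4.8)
The plan is to reduce the statement to data already recorded in Proposition \ref{pIntersectionD8effective} together with the linkage relations and the automorphism recipe used to define the $\tildeB^\pm_{ijk}$. Since $(\tildeB^\pm_{ijk}-2\tildeK).\tildeL = \tildeB^\pm_{ijk}.\tildeL - 2\,\tildeK.\tildeL$ and the intersection matrix of Proposition \ref{pIntersectionD8effective} gives $\tildeK.\tildeL = 1$, it is enough to prove $\tildeB^+_{ijk}.\tildeL = 1$ and $\tildeB^-_{ijk}.\tildeL = 3$ for all admissible $i,j,k$, which then yield $-1$ and $+1$ respectively.

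First I would remove the dependence on $i$ and $j$. By definition $\tildeB^\pm_{ijk} = \iota^i(\alpha^j(\tildeB^\pm_{00k}))$, where $\iota$ and $\alpha$ induce automorphisms of $\tildeX$ (they lift from $X$ to the minimal resolution). Any automorphism $\phi$ of $\tildeX$ preserves the canonical class, so $\phi^{\ast}\tildeK = \tildeK$; if in addition $\phi^{\ast}\tildeL = \tildeL$, then by the projection formula $(\tildeB^\pm_{ijk}-2\tildeK).\tildeL = \phi_{\ast}(\tildeB^\pm_{00k}-2\tildeK).\tildeL = (\tildeB^\pm_{00k}-2\tildeK).\tildeL$. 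It therefore suffices to check that $\iota$ and $\alpha$ fix the class of $\tildeL$. Recall that $\tildeL$ is the strict transform of the image in $X$ of the curve $\langle L\rangle\subset Y$, which is the full preimage of the $\sigma$-orbit $\langle\LQ^0_0\rangle$ under $\varphi_{K_Y}\colon Y\to Q$. As $\iota$ is the deck transformation of $\varphi_{K_Y}$, it fixes $\langle L\rangle$ setwise; and $\alpha$ normalises $\langle\sigma\rangle$ and, as one checks directly from the parametrisation $\LQ^0_0=(-t:-s:s:t)$, carries $\LQ^0_0$ into the orbit $\langle\LQ^0_0\rangle$, hence permutes the five lines $\LQ^0_i$ and fixes $\langle L\rangle$ setwise. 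Both automorphisms therefore fix the image in $X$, hence fix $\tildeL$ setwise, hence fix $[\tildeL]$.

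It remains to treat the four curves $\tildeB^\pm_{00k}$. For $(+,0)$ the intersection vector of $\tildeB^+_{000}$ with the fourteen curves of Proposition \ref{pIntersectionD8effective} is $\{2,2,3,2,3,2,3,2,1,2,0,1,1,0\}$, whose entries in the $\tildeL$- and $\tildeK$-slots give $\tildeB^+_{000}.\tildeL = 1$ and $\tildeB^+_{000}.\tildeK = 2$, so $(\tildeB^+_{000}-2\tildeK).\tildeL = -1$. For $k=1,2,3$ we have $\tildeB^+_{00k}\equiv 5\tildeK-\tildeE-\tildeB^+_{000}$ with $\tildeE$ one of $\tildeE_3^+,\tildeE_4^+,\tildeE_5^+$; using $\tildeK.\tildeL=1$, $\tildeE.\tildeL=3$ and $\tildeB^+_{000}.\tildeL=1$ from the matrix gives $\tildeB^+_{00k}.\tildeL = 5-3-1 = 1$. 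Finally $\tildeB^-_{00k}\equiv 4\tildeK-\tildeB^+_{00k}$ gives $\tildeB^-_{00k}.\tildeL = 4-1 = 3$. Combining with $\tildeK.\tildeL=1$ yields $(\tildeB^+_{00k}-2\tildeK).\tildeL = -1$ and $(\tildeB^-_{00k}-2\tildeK).\tildeL = +1$, which together with the previous paragraph proves the claim.

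The step I expect to require the most care is verifying that $\alpha$ really fixes $[\tildeL]$ (for $\iota$ the deck-transformation argument is immediate). This matters, because the fourteen degree-one elliptic curves do \emph{not} all meet $\tildeB^+_{000}$ in the same number, so a priori $\alpha$ could move $\tildeL$ to a curve with a different intersection number. Settling it means tracking the $\alpha$-action through the tower $Y\to X\leftarrow\tildeX$ and the blow-downs of the four $(-2)$-curves, and matching it with the $\ZZ/5$-indexing conventions and the explicit parametrisation of $\LQ^0_0$ — a finite verification, of the same kind as the computations already carried out in Section \ref{sCurvesBarlow} and easily done on a computer.
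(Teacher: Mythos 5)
Your argument is correct, but it is not what the paper does: the paper's proof is a direct machine computation of $\tildeB^{\pm}_{ijk}.\tildeL$ for all $32$ curves on $Y$ over $\FF_{421}$ via Lemma \ref{lIntersection} (the proof reads simply ``Calculation''). You instead \emph{derive} the statement from data already on record: the intersection vector $\{2,2,3,2,3,2,3,2,1,2,0,1,1,0\}$ of $\tildeB^+_{000}$ gives $\tildeB^+_{000}.\tildeL=1$ and $\tildeK.\tildeL=1$; the linkage relations $\tildeB^+_{00k}\equiv 5\tildeK-\tildeE^+_{j}-\tildeB^+_{000}$ and $\tildeB^-_{00k}\equiv 4\tildeK-\tildeB^+_{00k}$ together with $\tildeE^+_j.\tildeL=3$ propagate this to all $k$; and equivariance under $\iota$ and $\alpha$ handles the indices $i,j$. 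All three steps check out, including the one you rightly flag as delicate: $\alpha$ acts by $i\mapsto 3i\bmod 5$ on the coordinates, and a one-line substitution shows it carries $\LQ^0_0(s:t)=(-t:-s:s:t)$ to $\LQ^0_0$ reparametrised (alternatively, $\langle\LQ^0_0\rangle$ is the unique $\sigma$-orbit of pairwise \emph{disjoint} lines among the three orbits of the $15$ lines, so any automorphism normalising $\langle\sigma\rangle$ must preserve it); since the $(-2)$-curves satisfy $\tildeC^{\pm}_i.\tildeL=0$, the computation is even insensitive to possible exceptional components absorbed in the linkage. What your route buys is a conceptual explanation of the $\mp$ sign (it is forced by the alternating $5\tildeK$/$4\tildeK$ linkage) and independence from any new finite-field computation; what the paper's route buys is uniformity --- the same script verifies this proposition and all the neighbouring intersection tables at once, without relying on the linked divisor being exactly $\tildeB^+_{00k}$ as a divisor class.
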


\begin{proof}
Calculation.
\end{proof}

\begin{proposition}
The exceptional curves $\tildeC_i^\pm$ have the following intersections with $\tildeB_{ijk}^\pm-2\tildeK$ on $\tildeX$:
\begin{center}
\begin{tabular}{|c|c|c|c|c|}
	\hline
	 & \multicolumn{2}{|c|}{$i=0$} & \multicolumn{2}{|c|}{$i=1$} \\ 
	 \cline{2-5}
	 & $j=0$ & $j=1$ & $j=0$ & $j=1$ \\ 
	 \hline
	$\tildeC_1^+$ & $0$ & $1$ & $0$ & $1$ \\
	$\tildeC_1^- $ & $1$ & $0$ & $1$ & $0$ \\	
	$\tildeC_2^+$ & $1$ & $0$ & $0$ & $1$ \\
	$\tildeC_2^-$  & $0$ & $1$ & $1$ & $0$ \\
	\hline
\end{tabular}	
\end{center}
\end{proposition}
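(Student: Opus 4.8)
The statement to prove is a table of intersection numbers
$(\tildeC_i^\pm).(\tildeB_{ijk}^\pm - 2\tildeK)$ on $\tildeX$, so the approach is entirely computational in the sense of Lemma \ref{lIntersection}, but organized so that one only has to do a small amount of work by hand and then propagate it via the symmetry group. First I would note that by Lemma \ref{lIntersection}(5) we have $\tildeC_i^\pm.\tildeK = \tfrac{1}{10}\deg V((x_1)+I_{C_i^\pm}) = 0$, since the $(-2)$-curves $\tildeC_i^\pm$ lie over the four branch points $P_i^\pm$ of $p$, none of which lie on $\{x_1=0\}$ (this is exactly the observation already used in the proof of Proposition \ref{pIntersectionD8effective}). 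Hence the table reduces to computing $\tildeC_i^\pm.\tildeB_{000}^+$ and its translates, i.e. the entries of the table are exactly $\tildeC_i^\pm.\tildeB_{ijk}^\pm$ (no correction term from $2\tildeK$).

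Next I would compute the four numbers $\tildeC_i^\pm.\tildeB_{000}^+$ directly. By Proposition (on the construction of $\tildeB_{000}^+$) the intersection vector of $\tildeB_{000}^+$ with the basic curves was computed to be $\{2,2,3,2,3,2,3,2,1,2,0,1,1,0\}$; reading off the last four entries (those corresponding to $\tildeC_1^+,\tildeC_1^-,\tildeC_2^+,\tildeC_2^-$) gives $\tildeC_1^+.\tildeB_{000}^+ = 0$, $\tildeC_1^-.\tildeB_{000}^+ = 1$, $\tildeC_2^+.\tildeB_{000}^+ = 1$, $\tildeC_2^-.\tildeB_{000}^+ = 0$. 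This is precisely the $i=0$, $j=0$ column of the table. For the $j=1$, $i=0$ column I would use $\tildeB_{001}^\pm,\tildeB_{002}^\pm,\tildeB_{003}^\pm$, but actually the table has no $k$-dependence, so I would instead observe that linkage via $5\tildeK$ to $\tildeB_{00k}^++\tildeE$ with $\tildeE$ disjoint from the $(-2)$-curves $\tildeC_i^\pm$ (as all the $\tildeE_j^\pm$ for $j=3,4,5$ are) leaves the intersection numbers with the $\tildeC_i^\pm$ unchanged: indeed $\tildeC_i^\pm.\tildeB_{00k}^+ = \tildeC_i^\pm.(5\tildeK) - \tildeC_i^\pm.\tildeE - \tildeC_i^\pm.\tildeB_{00(k-1)}^+ = 0 - 0 - \tildeC_i^\pm.\tildeB_{00(k-1)}^+$ — wait, this would flip signs, so in fact here one must use that the linked curve is effective and irreducible to conclude the intersection is nonnegative; more carefully, the genus-$3$ curve $\tildeB'$ in $|5\tildeK-\tildeE-\tildeB|$ meets each $(-2)$-curve in a number determined by $\tildeB'.\tildeC_i^\pm \ge 0$ together with the adjunction/genus constraints, and one checks with the computer (as the construction already does) that the pattern is as claimed. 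Concretely, rather than belabor this, I would simply record the computed intersection vectors of $\tildeB_{001}^+,\tildeB_{002}^+,\tildeB_{003}^+$ with the $(-2)$-curves from the same Macaulay2 computation that produced them, which yields the $j=1$ entries.

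For the remaining two columns ($i=1$), I would use the definition $\tildeB_{ijk}^\pm = \iota^i(\alpha^j(\tildeB_{00k}^\pm))$: the automorphism $\iota$ of $\tildeX$ acts on the set of four $(-2)$-curves $\{\tildeC_1^+,\tildeC_1^-,\tildeC_2^+,\tildeC_2^-\}$ by a known permutation (computable from its action on the branch points $P_i^\pm$ on $Y$, see Remark \ref{rOperation}, where $\iota(y_i)=-y_i$), and since intersection numbers are preserved by automorphisms, $\tildeC.\tildeB_{1jk}^\pm = (\iota^{-1}\tildeC).\tildeB_{0jk}^\pm$. Working out that $\iota$ swaps $\tildeC_2^+\leftrightarrow\tildeC_2^-$ and fixes $\tildeC_1^\pm$ (or whichever permutation it actually induces — this is read off the sign change $y\mapsto -y$ on the coordinates of $P_2^\pm$ in the Proposition listing $P_i^\pm$) then transports the $i=0$ columns to the $i=0{:}1$ columns, reproducing the table. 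The main obstacle is bookkeeping rather than mathematics: correctly identifying the permutation of the four $(-2)$-curves induced by $\iota$ and $\alpha$, and making sure the $\pm$ superscript on $\tildeB$ (which labels the $5\tildeK$-vs-$4\tildeK$ linkage and controls the sign in the companion Proposition $(\tildeB_{ijk}^\pm-2\tildeK).\tildeL=\mp1$) is tracked consistently across all four columns; once the permutation action is pinned down, every entry follows from the single computed vector $\{2,2,3,2,3,2,3,2,1,2,0,1,1,0\}$ together with $\tildeC_i^\pm.\tildeK=0$ and the fact that $\alpha^2=\iota=\tau$ on $X$.
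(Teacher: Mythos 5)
The paper's own proof of this proposition is literally ``Calculation'': all $32$ curves $\tildeB_{ijk}^\pm$ are constructed explicitly over $\Fp$ and their intersections with the $\tildeC_i^\pm$ are computed directly in Macaulay2. Your reduction of the statement to $\tildeC_i^\pm.\tildeB_{ijk}^\pm$ (using $\tildeC_i^\pm.\tildeK=0$, visible in the matrix of Proposition \ref{pIntersectionD8effective}) is correct, and reading the $(i,j)=(0,0)$ column off the recorded vector $\{2,2,3,2,3,2,3,2,1,2,0,1,1,0\}$ is also correct. But the two ``propagation'' steps that are supposed to generate the remaining columns both have genuine problems. For the $j$/$k$-direction you notice yourself that linkage via $5\tildeK-\tildeE-\tildeB$ or $4\tildeK-\tildeB$ flips the sign of the intersection with each $\tildeC_i^\pm$; the resolution is that the effective member of these linear systems is $\tildeB'$ \emph{plus} some $(-2)$-curves, so the class of the irreducible linked curve differs from $5\tildeK-\tildeE-\tildeB$ by $(-2)$-curve corrections, and nothing short of identifying those corrections (i.e.\ redoing the computation) determines the new intersection numbers. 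You end up conceding this and falling back on ``record the computed vectors'', which is exactly the paper's proof, so this part adds no independent argument.

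The more serious gap is in the $i$-direction. You propose to transport the $i=0$ columns by the permutation that $\iota$ induces on $\{\tildeC_1^+,\tildeC_1^-,\tildeC_2^+,\tildeC_2^-\}$, and you guess that $\iota$ fixes $\tildeC_1^\pm$ and swaps $\tildeC_2^\pm$ --- which is indeed the unique permutation consistent with the table. However, the very computation you invoke to pin this down gives a different answer: $\iota$ negates all the $y$-coordinates, and from the listed coordinates $\iota(P_1^+)=P_1^-$ \emph{and} $\iota(P_2^+)=P_2^-$, so the induced automorphism of $X$ (which equals $\alpha^2=\tau$) swaps \emph{both} pairs of nodes, hence both pairs of $(-2)$-curves. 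With that permutation the transport of the column $(0,1,1,0)$ yields $(1,0,0,1)$ for $(i,j)=(1,0)$, not the $(0,1,0,1)$ asserted in the table. So either the transport argument is inapplicable as stated (e.g.\ because the labels $\tildeB^\pm_{1jk}$ produced by the actual computation do not coincide with the naive application of $\iota$ to the ideal of $\tildeB^\pm_{0jk}$ together with the naive labelling of the $\tildeC$'s), or some additional bookkeeping is missing; in either case the step is not justified and, taken at face value, produces entries contradicting the proposition. Since both propagation mechanisms fail or reduce to direct computation, the honest proof remains what the paper does: compute all $32$ intersection vectors by machine.
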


\begin{proof}
Calculation.
\end{proof}

\begin{proposition}
We have the following intersections with the elliptic curves $\tildeE_1$ and $\tildeE_2$:
\begin{align*}
	(\tildeB_{ijk}^+ - 2\tildeK) . \tildeE_i &= 1 \\
	(\tildeB_{ijk}^- - 2\tildeK) . \tildeE_i &=  0
\end{align*}
For the elliptic curves $\tildeE_3^\pm$ and $\tildeE_4^\pm$ we have:
\begin{center}
\begin{tabular}{|c|c||c|c|c|c|}
	\hline
	 && $k=0$ & $k=1$ & $k=2$ & $k=3$ \\ 
	 \hline
	\multirow{4}{*}{$\tildeB_{0jk}^\pm - 2\tildeK$} &  $\tildeE_3^+$ &  $\pm 1$ &  $\pm 1$ &  $0$ &  $0$\\
	&  $\tildeE_3^-$ &  $0$ &  $0$ &  $\pm 1$ &  $\pm 1$ \\
	&  $\tildeE_4^+$ &  $\pm 1$ &  $0$ &  $\pm 1$ &  $0$ \\
	&  $\tildeE_4^-$ &  $0$ &  $\pm 1$ &  $0$ &  $\pm 1$ \\
 	\hline
	\multirow{4}{*}{$\tildeB_{1jk}^\pm - 2\tildeK$} &  $\tildeE_3^+$ &  $0$ &  $\pm 1$ &  $0$ &  $\pm 1$ \\
	&  $\tildeE_3^-$ &  $\pm 1$ &  $0$ & $\pm 1$ &  $0$ \\
	&  $\tildeE_4^+$ &  $0$ &  $0$ &  $\pm 1$ & $\pm 1$ \\
	&  $\tildeE_4^-$ &  $\pm 1$ &  $\pm 1$ &  $0$ &  $0$ \\
	\hline
\end{tabular}	
\end{center}
For the elliptic curves  $\tildeE_5^\pm$ we have:
\begin{center}
\begin{tabular}{|c|c|c|c|c|c|}
	\hline
	 & $k=0$ & $k=1$ & $k=2$ & $k=3$ \\ 
	 \hline
	$(\tildeB^\pm_{0jk} -2\tildeK).\tildeE_5^-$ & $0$ & $\pm 1$ & $\pm 1$& $0$ \\
          $(\tildeB^\pm_{1jk} -2\tildeK).\tildeE_5^+$ & $0$ & $\pm 1$& $\pm 1$& $0$ \\
          $(\tildeB^\pm_{1jk} -2\tildeK).\tildeE_5^-$ & $\pm 1$ & $0$ & $0$ & $\pm 1$\\
           $(\tildeB^\pm_{0jk} -2\tildeK).\tildeE_5^+$ & $\pm 1$ & $0$ & $0$ & $\pm 1$ \\
	 \hline
\end{tabular}
\end{center}

\end{proposition}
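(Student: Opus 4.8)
Since the statement is merely a list of intersection numbers on $\tildeX$, the plan is computational, and the point is to arrange the computation so that almost everything is forced by a single class together with the symmetry and linkage already available. I would start from the class of $\tildeB^+_{000}$ in $\Pic(\tildeX)$: its intersection numbers with the fourteen curves of Proposition~\ref{pIntersectionD8effective} were found there to be $\{2,2,3,2,3,2,3,2,1,2,0,1,1,0\}$, and since those fourteen curves span $\Pic(\tildeX)\otimes\QQ$ and the intersection form is nondegenerate, this determines $[\tildeB^+_{000}]$ (equivalently $[\tildeB^+_{000}]=e+2\tildeK$ with $e$ the highest root of the $D_8$--lattice written in the $\sD_i$--basis). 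Moreover, since $\iota=\alpha^2$ on $X$ and hence on $\tildeX$, the thirty--two curves fall into eight $\langle\alpha\rangle$--orbits, $\tildeB^\pm_{ijk}=\alpha^{2i+j}(\tildeB^\pm_{00k})$ with $k\in\{0,1,2,3\}$, and because any automorphism fixes $\tildeK$ and preserves the form it is enough to intersect each $\tildeB^\pm_{00k}-2\tildeK$ with the $\alpha$--translates of $\tildeE_1,\tildeE_2,\tildeE_3^\pm,\tildeE_4^\pm,\tildeE_5^\pm$; these eight curves are themselves among the fourteen generators, so the quantities we want are just coordinates of the intersection vectors of the $\tildeB^\pm_{ijk}$ against those generators.

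To produce the eight base classes I would use the linkage relations established in the construction: for $k=1,2,3$ the curve $\tildeB^+_{00k}$ is the residual of $\tildeB^+_{000}+\tildeE^+_m$ (with $\tildeE^+_m=\tildeE_3^+,\tildeE_4^+,\tildeE_5^+$) inside $|5\tildeK|$, and $\tildeB^-_{00k}$ is the residual of $\tildeB^+_{00k}$ inside $|4\tildeK|$. On divisor classes these read $[\tildeB^+_{00k}]=5\tildeK-\tildeE^+_m-[\tildeB^+_{000}]-\Delta_k$ and $[\tildeB^-_{00k}]=4\tildeK-[\tildeB^+_{00k}]-\Delta'_k$, where $\Delta_k,\Delta'_k$ are the (possibly empty) sums of $(-2)$--curves forced to occur as fixed parts; a $(-2)$--curve appears precisely when the naive residual class has negative intersection with it, and which ones they are is already encoded in the intersection tables of $\tildeB^\pm_{ijk}-2\tildeK$ with $\tildeL$ and with the $\tildeC_i^\pm$ proved in the two preceding propositions (so, e.g., $\Delta'_0=\tildeC_1^-+\tildeC_2^+$, the two $(-2)$--curves met by $\tildeB^+_{000}$). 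The last ingredient is the action of $\alpha$, and hence of $\iota=\alpha^2$, on $\Pic(\tildeX)$, which follows from how the coordinate operations of Remark~\ref{rOperation} permute the fourteen generators --- in particular from the permutation they induce on the eight $\ZZ/5$--invariant elliptic quintics, visible on Reid's parameter points $e_i^\pm$. Once all thirty--two classes are written in the span of the fourteen curves, each entry of the three tables is a single pairing against the Gram matrix of Proposition~\ref{pIntersectionD8effective}. The $\pm/\mp$ pattern then drops out formally: $[\tildeB^-_{ijk}]-2\tildeK=-([\tildeB^+_{ijk}]-2\tildeK)-(\text{an }\alpha\text{--translate of }\Delta'_k)$, and since $\tildeE_3^\pm,\tildeE_4^\pm,\tildeE_5^\pm$ meet no $(-2)$--curve the correction term vanishes against them but survives against $\tildeE_1,\tildeE_2$, which is exactly what sets the first table apart.

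I expect the genuine obstacle to be the bookkeeping: determining precisely how the coordinate $4$--cycle $\alpha$ permutes the elliptic quintics (equivalently the points $e_i^\pm$), and tracking which $(-2)$--curves get absorbed in each of the $5\tildeK$-- and $4\tildeK$--linkages so that the $\Delta_k,\Delta'_k$ are exact. A cleaner and more self--contained route --- presumably the one actually used --- avoids $\Pic(\tildeX)$ and works directly on $Y$: each curve involved carries an explicit $D_{10}$--invariant homogeneous ideal (via the $\rad$ construction for $\tildeB^+_{000}$, the residual ideal for each linkage, and plain coordinate substitution for $\iota$ and $\alpha$), so every intersection number is obtained from Lemma~\ref{lIntersection}(3)--(4) by a Gr\"obner--basis computation of $\deg V(I_1+I_2)$ and $\deg V(I_1+I_2+I_r)$, exactly as in the scripts of \cite{BBKS12}. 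The one delicate point, that these degrees are computed over $\Fp$ rather than over $\CC$, is dealt with as in Remark~\ref{rFiniteFieldIntersection}: the curves are reductions of curves defined in characteristic zero (Remark~\ref{rFieldDefinition}), so the configuration does not degenerate and the numbers carry over.
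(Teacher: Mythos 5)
Your proposal is correct, and the route you single out at the end as "presumably the one actually used" is indeed the paper's proof: the authors simply verify each entry by computing $\deg V(I_1+I_2)$ (and, where needed, $\deg V(I_1+I_2+I_r)$) for the explicit ideals on $Y$ over $\FF_{421}$ via Lemma \ref{lIntersection} in the Macaulay2 scripts, with the characteristic-zero lift handled as in Remarks \ref{rFieldDefinition} and \ref{rFiniteFieldIntersection}. Your lattice-theoretic reduction to the single class $[\tildeB^+_{000}]$ plus linkage and the $\alpha$-action is a legitimate and more structural alternative, though, as you note, it requires carefully pinning down the fixed parts $\Delta_k,\Delta'_k$ and the permutation $\alpha$ induces on the $\tildeE_i^\pm$, neither of which the paper makes explicit.
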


\begin{proof}
Calculation.
\end{proof}

\begin{proposition}
Let $i,j,i',j' \in \{0,1\}$ and $q,q' \in \{+1,-1\}$. If $i=i'$, then
\[
	( \tildeB_{ijk}^q.\tildeB_{i'j'k'}^{q'}  )_{k=0,\dots, 3, k' = 0, \dots , 3} 
	= \begin{pmatrix}
		b & a & a & a \\
		a & b & a & a \\
		a & a & b & a \\
		a & a & a & b 
	    \end{pmatrix}
\]
with $a = 3+ ((j+j') \mod 2)$ and  $b=a-qq'$.  If $i\not=i'$ then
\[
	( \tildeB_{ijk}^q.\tildeB_{i'j'k'}^{q'}  )_{k=0,\dots, 3, k' = 0, \dots , 3} 
	= \begin{pmatrix}
		a & a & a & b \\
		a & b & a & a \\
		a & a & b & a \\
		b & a & a & a
	    \end{pmatrix}
\]
with $a = 4-(qq'+1)/2$ and  $b=3+(qq'+1)/2$.
\end{proposition}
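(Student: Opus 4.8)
The plan is to reduce the whole statement to pairings of roots in the $E_8$-lattice. By the preceding propositions every class is of the form $[\tildeB_{ijk}^q] = 2\tildeK - r_{ijk}^q$ with $r_{ijk}^q$ a root of the summand $(-E_8) = \tildeK^{\perp}$ of $\Pic(\tildeX) = \langle\tildeK\rangle \perp \tildeK^{\perp}$ — indeed $r_{ijk}^q.\tildeK = 2\tildeK^2 - \tildeK.\tildeB_{ijk}^q = 2 - 2 = 0$, and $2\tildeK - \tildeB_{ijk}^q$ was already shown to be a root. Hence
\[
\tildeB_{ijk}^q.\tildeB_{i'j'k'}^{q'} = (2\tildeK - r_{ijk}^q).(2\tildeK - r_{i'j'k'}^{q'}) = 4\tildeK^2 + r_{ijk}^q.r_{i'j'k'}^{q'} = 4 + r_{ijk}^q.r_{i'j'k'}^{q'},
\]
and the pairing of two roots lies in $\{-2,-1,0,1,2\}$ (equal to $-2$ exactly when the roots coincide). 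So it is enough to determine which of these five values occurs for each coincidence pattern of $i,i'$, each parity of $j+j'$, each sign $qq'$ and according to whether $k=k'$; the two matrix shapes in the statement are just a repackaging of that small table.

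To reach that point I would first pin down $[\tildeB_{000}^+]$: it is $e + 2\tildeK$ with $e = \tfrac12(\sD_1 + 2\sD_2 + 3\sD_3 + 4\sD_4 + 5\sD_5 + 6\sD_6 + 3\sD_7 + 4\sD_8)$ the highest root of the $E_8$-lattice attached to the $D_8$-basis $\sD_1,\dots,\sD_8$; equivalently this class is uniquely recovered from the fourteen intersection numbers $\{2,2,3,2,3,2,3,2,1,2,0,1,1,0\}$ of $\tildeB_{000}^+$ with the curves of Proposition \ref{pIntersectionD8effective}, whose Gram matrix has rank $9$ and which span $\Pic(\tildeX)$. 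Then I would push this through the construction of the remaining $31$ curves: the linkage identities (the residual of $\tildeB_{00k}^+ + \tildeE$ in a member of $|5\tildeK|$, and the residual of $\tildeB_{00k}^+$ in a member of $|4\tildeK|$) express each $[\tildeB_{00k}^\pm]$ as an explicit integral combination of $\tildeK$ and classes already known; and finally $[\tildeB_{ijk}^\pm] = (\iota^{\ast})^i(\alpha^{\ast})^j[\tildeB_{00k}^\pm]$, where the induced actions of $\iota$ and $\alpha$ on $\Pic(\tildeX)$ are read off from Remark \ref{rOperation} — both fix $\tildeK$, they permute the degree-one elliptic curves $\tildeE_{\bullet}^\pm$ and the four $(-2)$-curves $\tildeC_i^\pm$ according to the index permutations $\iota,\tau,\alpha$, and $\iota = \alpha^2$ on $\tildeX$, so $\langle\alpha\rangle$ is cyclic of order $4$.

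With all $32$ classes made explicit, the rest is mechanical: compute the root pairings $r_{ijk}^q.r_{i'j'k'}^{q'}$ with the intersection form and assemble the four $4\times 4$ blocks, verifying that the value depends only on whether $i=i'$, on $(j+j')\bmod 2$, on $qq'$, and on $k$ versus $k'$. I expect the genuine work to sit in determining the permutation action of $\iota$ and especially $\alpha$ on the entire configuration: keeping track of how $\alpha$ (whose square is $\iota$) interchanges the ``$+$'' and ``$-$'' pentagons and elliptic quintics is exactly what produces the $qq'$-dependence that separates the diagonal entry $b$ from the off-diagonal $a$, as well as the asymmetry between the $i=i'$ and $i\neq i'$ blocks. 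Once the permutation data is fixed the block structure follows by direct substitution; and because every curve involved is available as an explicit subscheme of $Y$, this computation — along with the check that the $32$ curves are pairwise distinct and irreducible, which is what makes the class pairings equal to the geometric intersection numbers — can be, and in the paper is, carried out with the Macaulay2 scripts of \cite{BBKS12}.
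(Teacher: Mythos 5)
Your proposal is correct in substance, and like the paper it ultimately bottoms out in an explicit machine computation; but the organization is genuinely different. The paper's proof is literally ``Calculation'': the $32$ curves are available as explicit subschemes of $Y$ (over $\FF_{421}$), and their pairwise intersections are computed directly from the ideals via Lemma \ref{lIntersection}, exactly as was done for Proposition \ref{pIntersectionD8effective}. You instead work entirely in $\Pic(\tildeX)$: write $[\tildeB_{ijk}^q]=2\tildeK-r_{ijk}^q$ with $r_{ijk}^q$ a root orthogonal to $\tildeK$, so that $\tildeB_{ijk}^q.\tildeB_{i'j'k'}^{q'}=4+r_{ijk}^q.r_{i'j'k'}^{q'}$, pin down $[\tildeB_{000}^+]=e+2\tildeK$ via the highest root of the $E_8$-lattice, propagate by the linkage identities $[\tildeB^-_{00k}]=4\tildeK-[\tildeB^+_{00k}]$ and $[\tildeB^+_{00k}]=5\tildeK-[\tildeE]-[\tildeB^+_{000}]$, and then apply $(\iota^*)^i(\alpha^*)^j$. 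This is a valid and arguably more transparent bookkeeping (and, as you note, the irreducibility of the residual curves is irrelevant for class pairings). What it does \emph{not} buy you is freedom from computation: the induced permutation of $\alpha$ on the labelled configuration $\{\tildeE_\bullet^\pm,\tildeC_\bullet^\pm,\tildeL\}$ cannot literally be ``read off from Remark \ref{rOperation}'' --- identifying which elliptic quintic and which node each curve is sent to is itself a check on the explicit ideals, which is presumably why the authors compute everything on $Y$ in one pass. Two small inaccuracies to fix: the fourteen curves of Proposition \ref{pIntersectionD8effective} span $\Pic(\tildeX)$ only over $\QQ$ (they generate the index-two sublattice $\mathbf{1}\oplus(-D_8)$ plus $\tildeK$), which still suffices to determine a class from its pairings since the form is nondegenerate; and in the $i\neq i'$ block the exceptional entry $b$ sits at the positions $(0,3),(1,1),(2,2),(3,0)$, i.e.\ at $k'=\sigma(k)$ for the involution $\sigma=(0\,3)$, not at $k=k'$ --- so the final dichotomy is governed by a specific permutation of the index $k$ induced by the linkage/group action, not by equality of $k$ and $k'$. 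Neither point invalidates your method, since you propose to compute all sixteen entries of each block anyway.
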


\begin{proof}
Calculation.
\end{proof}

Using these roots, we can use the method of Section \ref{sLatticeTheory} to obtain explicitly a numerically semi-orthogonal sequence of 
line bundles:

\begin{proposition}\xlabel{pSequence}
The following sequence of line bundles is numerically semi-orthogonal:
\begin{align*}
	\sL_1 &= \tildeE_1-\tildeE_2, \\
	\sL_2 &= \tildeE_3^+ - \tildeE_2, \\
	\sL_3 &= 2\tildeK-\tildeE^+_4, \\
	\sL_4 &= \tildeE_4^- - \tildeE_2, \\
	\sL_5 &= 2\tildeK-\tildeB^-_{012}-\tildeC_1^+, \\
	\sL_6 &= 2\tildeK-\tildeB^-_{002}-\tildeC_1^-, \\
	\sL_7 &= 2\tildeK-\tildeE^+_5, \\
	\sL_8 &= 2\tildeK - \tildeB^-_{111} -\tildeC_1^+, \\
	\sL_9 &= 2\tildeK - \tildeB^-_{101} -\tildeC_1^-, \\
	\sL_{10} &= \sO, \\
	\sL_{11} &= \tildeK-\tildeE_2.
\end{align*}

\end{proposition}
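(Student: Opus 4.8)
The strategy is to recognize $(\sL_1,\dots,\sL_{11})$ as a concrete incarnation of the abstract numerically exceptional sequence of Proposition \ref{pNumerics}, and then to invoke that purely lattice-theoretic result. By Definition \ref{dNumExceptional} we must show $\chi(\sL_i,\sL_i)=1$ for all $i$ and $\chi(\sL_i,\sL_j)=0$ for $i>j$. Since $\chi(\sO_{\tildeX})=1$, Riemann--Roch on $\tildeX$ gives, for any line bundles $\sL_i,\sL_j$,
\[
\chi(\sL_i,\sL_j)\;=\;1+\tfrac12\bigl((\sL_j-\sL_i)^2-(\sL_j-\sL_i)\cdot\tildeK\bigr),
\]
so the diagonal conditions hold automatically, and the Euler pairing of a finite collection of classes depends only on the intersection form. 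It therefore suffices to exhibit, inside $\Pic(\tildeX)=\mathbf{1}\perp(-E_8)$, roots $A_1,\dots,A_8,B_1,B_2$ realizing the intersection diagram of Section \ref{sLatticeTheory}, and to check that the $\sL_i$ coincide (in the appropriate order) with the classes of Proposition \ref{pNumerics} built from them.

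Concretely, I would take
\begin{align*}
A_1 &= \tildeE_1-\tildeE_2, & A_2 &= \tildeE_3^+-\tildeE_1,\\
A_3 &= \tildeE_4^--\tildeE_3^+, & A_4 &= 2\tildeK-\tildeB_{012}^--\tildeC_1^+-\tildeE_4^-+\tildeE_2,\\
A_5 &= \tildeB_{012}^-+\tildeC_1^+-\tildeB_{002}^--\tildeC_1^-, & A_6 &= \tildeB_{002}^-+\tildeC_1^--\tildeB_{111}^--\tildeC_1^+,\\
A_7 &= \tildeB_{111}^-+\tildeC_1^+-\tildeB_{101}^--\tildeC_1^-, & A_8 &= \tildeB_{101}^-+\tildeC_1^--\tildeK-\tildeE_2,\\
B_1 &= \tildeE_4^+-\tildeK, & B_2 &= \tildeE_5^+-\tildeK,
\end{align*}
and then verify, from the intersection numbers collected in this section (Proposition \ref{pIntersectionD8effective} and the subsequent propositions on the curves $\tildeB_{ijk}^\pm$), together with $\tildeB_{ijk}^\pm\cdot\tildeK=2$ and $(\tildeB_{ijk}^\pm)^2=2$ (from genus $3$ and canonical degree $2$), $\tildeC_i^\pm\cdot\tildeK=0$, $\tildeE_a\cdot\tildeK=1$ and $\tildeK^2=1$, that: (i) each $A_i$ and each $B_j$ has square $-2$ and is orthogonal to $\tildeK$, hence is a root of the $E_8$-lattice $\tildeK^\perp$, and the $A_i,B_j$ have exactly the pairings of the diagram in Section \ref{sLatticeTheory} ($A_i\cdot A_{i+1}=1$, $A_i\cdot A_j=0$ for $|i-j|\ge2$, $B_1\cdot A_3=B_2\cdot A_6=1$, all other $B\cdot A$ pairings zero, $B_1\cdot B_2=-1$); and (ii) with $k=\tildeK$, the sequence of Proposition \ref{pNumerics} attached to these roots is $\sL_1,\dots,\sL_9,\sL_{11},\sL_{10}$ --- that is, its $i$-th term is $\sL_i$ for $i\le9$, its tenth term is $A_1+\dots+A_8=\tildeK-\tildeE_2=\sL_{11}$, and its eleventh term is $\sO=\sL_{10}$ (a telescoping check: $A_i=\sL_{i}-\sL_{i-1}$ or $\sL_{i+1}-\sL_{i-1}$ as appropriate, and $B_1=\tildeK-\sL_3$, $B_2=\tildeK-\sL_7$).

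Granting (i) and (ii), Proposition \ref{pNumerics} says $\sL_1,\dots,\sL_9,\sL_{11},\sL_{10}$ is numerically exceptional, and passing to the order of Proposition \ref{pSequence} only requires the one additional identity $\chi(\sL_{11},\sL_{10})=0$, i.e. $\chi(A_1+\dots+A_8,\sO)=0$; but $r:=A_1+\dots+A_8$ is a root, so the displayed Riemann--Roch formula gives $\chi(r,\sO)=1+\tfrac12(r^2+r\cdot\tildeK)=1+\tfrac12(-2+0)=0$. This proves the Proposition. Equivalently --- and this is what the scripts \cite{BBKS12} actually run --- one may bypass Proposition \ref{pNumerics} altogether and simply verify the $\binom{11}{2}$ identities $(\sL_j-\sL_i)^2-(\sL_j-\sL_i)\cdot\tildeK=-2$ for $i>j$ directly.

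Either way there is no conceptual difficulty: the entire content is the bookkeeping of writing the relevant difference classes in an explicit basis of $\Pic(\tildeX)$ and reading off their self-intersections and canonical degrees from the several tables of this section. The one point that makes this legitimate is that the curves of Proposition \ref{pIntersectionD8effective} generate $\Pic(\tildeX)$ and that the later propositions record the intersection of every $\tildeB_{ijk}^\pm$ with each of them (and with each other), so that all the $\sL_i$, and all their differences, can be expressed and intersected explicitly; the rest is mechanical.
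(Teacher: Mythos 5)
Your argument is correct, and it is essentially the argument the paper only gestures at in the sentence preceding Proposition \ref{pSequence} (``Using these roots, we can use the method of Section \ref{sLatticeTheory}\dots''); but it is not the proof the paper actually writes down. The paper's proof consists of displaying the full $11\times 11$ matrix of Euler characteristics $\chi(\sL_i-\sL_j)$, computed by the Macaulay2 scripts from the intersection data, and observing that it is lower triangular with $1$'s on the diagonal --- i.e.\ exactly the ``bypass'' you describe in your closing remark. Your main route instead factors the verification through Proposition \ref{pNumerics}: the roots $A_i$ and $B_j$ you write down are indeed the ones forced by telescoping the partial sums (I checked several: $A_1,A_2,A_3,B_1,B_2$ all have square $-2$, are orthogonal to $\tildeK$, and realize the required pairings, e.g.\ $B_1\cdot A_3=1$ and $B_1\cdot B_2=-1$), you correctly observe that the resulting sequence is $\sL_1,\dots,\sL_9,\sL_{11},\sL_{10}$ rather than $\sL_1,\dots,\sL_{11}$, and you correctly supply the single extra vanishing $\chi(\sL_{11},\sL_{10})=0$ needed to transpose the last two terms, using that $\tildeK-\tildeE_2$ is a root. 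What your route buys is conceptual transparency and a smaller check ($10$ squares plus the Dynkin-type pairings instead of $55$ Euler characteristics); what it does not buy is independence from the machine computation, since both verifications rest on exactly the same intersection numbers of the curves $\tildeE$, $\tildeB$, $\tildeC$. One caveat: a literal reading of the printed tables (e.g.\ $\tildeB^-_{012}\cdot\tildeC_1^+=1$, $\tildeB^-_{012}\cdot\tildeE_4^-=2$, $\tildeB^-_{012}\cdot\tildeE_2=2$) gives $A_4^2=-4$ rather than $-2$, whereas the paper's $\chi$-matrix (entries $(4,5)=(5,4)=0$) forces $A_4^2=-2$; so either a displayed table has a sign slip or its indexing must be read differently, and in practice your step (i), like the paper's matrix, has to be certified by the scripts \cite{BBKS12} rather than by the printed data alone. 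This is a defect of the printed tables, not of your argument.
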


\begin{proof}
The following matrix contains $\chi(\sL_i-\sL_j)$ at the $(i,j)$-th entry:
\[
\makeatletter\c@MaxMatrixCols=11\makeatother\begin{pmatrix}
        1 &         0 &         0 &         0 &         0 &         0 &         0 &         0 &         0 &         0 &         0\\
        0 &         1 &         0 &         0 &         0 &         0 &         0 &         0 &         0 &         0 &         0\\
        {-1} &         {-1} &         1 &         0 &         0 &         0 &         0 &         0 &         0 &         0 &         0\\
        0 &         0 &         1 &         1 &         0 &         0 &         0 &         0 &         0 &         0 &         0\\
        0 &         0 &         1 &         0 &         1 &         0 &         0 &         0 &         0 &         0 &         0\\
        0 &         0 &         1 &         0 &         0 &         1 &         0 &         0 &         0 &         0 &         0\\
        {-1} &         {-1} &         0 &         {-1} &         {-1} &         {-1} &         1 &         0 &         0 &         0 &         0\\
        0 &         0 &         1 &         0 &         0 &         0 &         1 &         1 &         0 &         0 &         0\\
        0 &         0 &         1 &         0 &         0 &         0 &         1 &         0 &         1 &         0 &         0\\
        0 &         0 &         1 &         0 &         0 &         0 &         1 &         0 &         0 &         1 &         0\\
        0 &         0 &         1 &         0 &         0 &         0 &         1 &         0 &         0 &         0 &         1\\
        \end{pmatrix}.
\]
\end{proof}

\section{Sections in line bundles and the exceptional sequence}\xlabel{sExceptional}

Here we explain how we calculate sections of line bundles on $S$, or rather, obtain upper bounds for the dimensions of the spaces of sections in these line bundles. 

Look at the natural commutative diagram of $G$-varieties (recall $G=D_{10}$ here)
\[
\xymatrix{\tilde{Y} \ar[d]_{\tilde{p}} \ar[r]^{\tilde{\gamma}} & Y \ar[d]^p \\
					S\ar[r]^\gamma & X.}
\]
We write 
\[
D \equiv nK_S - P
\]
where $P$ is an effective divisor, $n\in \mathbb{N}$. Note that numerical equivalence coincides with linear equivalence on $S$. We find $P$ using integer programming (\cite{BBKS12}). 

Write $P = P' + \sum a_i \tilde{C}_i$ where $\tilde{C}_i$ are the $(-2)$-curves on $S$ (these are the $\tilde{C}_k^{\pm}$ of Section \ref{sCurvesBarlow}; here the index $i$ runs from $1$ to $4$), and $P'$ does not contain $(-2)$-curves as components. 

Let $P''=\tilde{\gamma}(\tilde{p}^* (P'))$.  

\begin{proposition}\xlabel{pSectionEstimate}
We have 
\[
\dim \left( H^0 (Y,  \mathcal{O}_Y (nK_Y - P'' ))^{G} \right) \ge \dim H^0 (S, \mathcal{O}_S (D))\, .
\]
If $\mathfrak{p}$ is some prime number and we denote reduction by the index $\mathfrak{p}$ (all data are defined over $\ZZ$), the analogous inequality holds:
\[
\dim \left( H^0 (Y_{\mathfrak{p}} ,  \mathcal{O}_{Y_{\mathfrak{p}}} (nK_{Y_{\mathfrak{p}}}  - P''_{\mathfrak{p}} ))^{G} \right) \ge \dim H^0 (S, \mathcal{O}_S (D))\, .
\]
\end{proposition}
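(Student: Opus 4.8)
The plan is to compare sections on $S$ with $G$-invariant sections on $Y$ by pulling divisors back through the chain $\tilde Y \to Y$, $\tilde Y \to S \to X$, and carefully tracking what happens at the four nodes/$(-2)$-curves. The key structural point is that $S$ is the minimal resolution of $X = Y/G$ (with $G = D_{10}$ acting freely in codimension one away from the four fixed points of $(\iota,\tau)$), so that away from the exceptional locus $\gamma^{-1}\colon S \dashrightarrow X$ is an isomorphism and $X = Y/G$ genuinely, whence $H^0(S,\sO_S(D))$ for $D = nK_S - P$ ought to sit inside $H^0(Y, \sO_Y(nK_Y))^G$ once we have corrected $D$ to make it pull back cleanly. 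Concretely, first I would note $\tilde p^\ast K_S = K_{\tilde Y}$ and $\tilde\gamma^\ast K_Y = K_{\tilde Y}$ (this is the content of Remark \ref{rBasicRelations}(1) lifted to the resolutions), so $nK$ is compatible on all four spaces; the only discrepancy is the divisor $P$, and only its $(-2)$-curve part $\sum a_i \tilde C_i$ is problematic, since $\tilde\gamma(\tilde p^\ast(\tilde C_i))$ is one of the twenty $(-1)$-curves over a node, not a curve pulled back from $Y$.

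**Second**, I would dispose of the $(-2)$-curve part by a standard negativity argument: if $D = nK_S - P' - \sum a_i \tilde C_i$ with $P'$ containing no $(-2)$-curve and all $a_i \ge 0$ (which is exactly how $P$ is chosen, via the integer program that writes $P$ as an effective combination), then every section of $\sO_S(D)$ also vanishes along $\sum a_i \tilde C_i$ to the prescribed order, so restriction gives an inclusion $H^0(S,\sO_S(nK_S - P' - \sum a_i\tilde C_i)) \hookrightarrow H^0(S, \sO_S(nK_S - P'))$. Thus it suffices to bound $\dim H^0(S,\sO_S(nK_S - P'))$. Now $P'$ has no exceptional component, so $\tilde p^\ast(P')$ equals $\tilde\gamma^\ast(D')$ up to exceptional divisors of $\tilde\gamma$ (those on which $K_{\tilde Y}$ and pullbacks from $Y$ restrict trivially), where $D' = \tilde\gamma(\tilde p^\ast(P')) =: P''$ is a genuine divisor on $Y$; the construction and $D_{10}$-invariance of the curves in Section \ref{sCurvesBarlow} guarantee $P''$ is $G$-stable, and $P''$ is effective because $P'$ is. Pulling back a section, a global section of $\sO_S(nK_S - P')$ gives, via $\tilde p^\ast$ followed by the (birational, hence injective on $H^0$ since $S$ is normal and $K_S$ is nef ample away from $(-2)$-curves) identification, a $G$-invariant global section of $\sO_{\tilde Y}(nK_{\tilde Y} - \tilde p^\ast P')$, and then pushing forward along the birational morphism $\tilde\gamma$ (using $\tilde\gamma_\ast \sO_{\tilde Y} = \sO_Y$ and that the extra exceptional twist only enlarges the space) lands it in $H^0(Y, \sO_Y(nK_Y - P''))^G$. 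Since all the maps on $H^0$ here are injective, we get the asserted inequality of dimensions.

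**Third**, for the reduction statement I would observe that every object in sight --- the quintic $Q$, the matrix $A$ with the chosen integer parameters, the ring $R$, the surface $Y = \operatorname{Proj} R$, the group action, the curves of Section \ref{sCurvesBarlow}, hence $P'$ and $P''$ --- is defined over $\ZZ$ (or over the ring of integers $\mathfrak O$ of a number field, as flagged in Remark \ref{rFieldDefinition} and Remark \ref{rFiniteFieldIntersection}), so the whole diagram and all divisors base-change to the fibre over a prime $\mathfrak p$. The same chain of pullback/pushforward inclusions then holds verbatim over the residue field, giving $\dim H^0(Y_{\mathfrak p}, \sO_{Y_{\mathfrak p}}(nK_{Y_{\mathfrak p}} - P''_{\mathfrak p}))^G \ge \dim H^0(S_{\mathfrak p}, \sO_{S_{\mathfrak p}}(D_{\mathfrak p}))$; and by semicontinuity of $h^0$ over $\operatorname{Spec}\mathfrak O$ the right-hand side is $\ge \dim H^0(S,\sO_S(D))$ in characteristic zero, but in fact for the application one wants the bound directly on $S_{\mathfrak p}$, which is what the second displayed inequality records (the subscript $\mathfrak p$ on $H^0(S,\sO_S(D))$ being understood, or rather the statement being used in the form that the finite-field computation of the left-hand side bounds the characteristic-zero $h^0$, which combined with semicontinuity the other way pins it down).

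**The main obstacle** I anticipate is making precise the claim that the birational morphisms induce \emph{injections} on global sections with the correct twists --- i.e., that passing from $S$ up to $\tilde Y$ and down to $Y$ does not lose sections and that the "exceptional divisor corrections" genuinely only go in the favourable direction. This requires knowing that $K_S$ restricted to each $(-2)$-curve is trivial and that $\tilde\gamma$ contracts precisely the twenty $(-1)$-curves $\hat C^{\pm}_{ij}$ on which the relevant line bundles are trivial or negative, so that $\tilde\gamma_\ast$ of the pulled-back sheaf reflexively recovers $\sO_Y(nK_Y - P'')$ up to an effective exceptional ambiguity; this is exactly the kind of bookkeeping the intersection-theoretic Lemma \ref{lIntersection} and Proposition \ref{pIntersectionD8effective} are set up to handle, so I would lean on those rather than redo it from scratch.
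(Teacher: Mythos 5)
Your argument is correct and follows essentially the same route as the paper's own proof: pull back along the degree-$10$ quotient map $\tilde p$ into the $G$-invariants, discard the effective $(-2)$-curve part $\sum a_i\tilde C_i$ of $P$, compare $\tilde p^*$ with $\tilde\gamma^*$ up to effective exceptional divisors, push down to $Y$ by the projection formula, and finish the mod-$\mathfrak p$ statement by semicontinuity (which you apply on $S$ where the paper applies it on $Y$ --- an immaterial difference). The only slips are cosmetic: $\tilde p^*K_S=\tilde\gamma^*K_Y=K_{\tilde Y}-\sum_{ij}\hat C_{ij}$ rather than $K_{\tilde Y}$ (both maps pick up the twenty $(-1)$-curves as ramification, resp.\ exceptional, divisors), and $\tilde p$ is finite of degree $10$, not birational --- but in each case the property you actually use (equality of the two pullbacks of the canonical class, injectivity of $\tilde p^*$ on sections) is correct.
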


\begin{proof}
We have that 
\begin{align*}
\dim H^0 (S, \mathcal{O}_S (D)) &\le \dim H^0 (\tilde{Y}, n\tilde{p}^* (K_S) - \tilde{p}^* (P))^{G}\\
 & = \dim H^0 (\tilde{Y}, n\tilde{p}^* (K_S) - \tilde{p}^* (P') - \sum_{ij} 2 a_i \hat{C}_{ij} )^{G}\\
 & \le  \dim H^0 (\tilde{Y}, n\tilde{p}^* (K_S) - \tilde{p}^* (P'))^G
\end{align*}
since $\mathcal{O}_{\tilde{Y}}(n\tilde{p}^* (K_S) - \tilde{p}^* (P') - \sum_{ij} 2 a_i \hat{C}_{ij})$ is a $G$-equivariant subbundle of $\mathcal{O}_{\tilde{Y}}(n\tilde{p}^* (K_S) - \tilde{p}^* (P'))$. Now
\[
\tilde{p}^*(K_S) = \tilde{\gamma}^*(K_Y) = K_{\tilde{Y}} - \sum_{ij}\hat{C}_{ij}
\]
and $\tilde{p}^* P' =\tilde{\gamma}^* (P'') - \sum_{ij} b_{ij}\hat{C}_{ij}$, $b_{ij}\ge 0$, hence
\begin{align*}
\dim H^0 (\tilde{Y}, n\tilde{p}^* (K_S) - \tilde{p}^* (P'))^G &= \dim H^0 (\tilde{Y}, \tilde{\gamma}^*(nK_Y -P'') + \sum_{ij} b_{ij}\hat{C}_{ij})^G .
\end{align*}
By the projection formula, and since $\tilde{\gamma}$ is $G$-equivariant, we get
\[
\tilde{\gamma}_{\ast} \left(  \mathcal{O}_{\tilde{Y}} (\tilde{\gamma}^*(nK_Y -P'') + \sum_{ij} b_{ij}\hat{C}_{ij}) \right) = \mathcal{O}_{Y} (nK_Y -P'')
\]
as $G$-bundles. This proves the first inequality and the second part follows using upper-semicontinuity over $\mathrm{Spec} (\ZZ )$.
\end{proof}

We will use the second inequality in Proposition \ref{pSectionEstimate} to obtain bounds on dimensions of spaces of sections: we will put $\mathfrak{p} = 421$ and compute the dimension of the space of degree $n$ polynomials in the coordinates $x_1, \dots , x_4, y_0, \dots , y_4$ on the weighted projective space $\mathbb{P} (1^4, 2^5)$ vanishing on $p^* (P'_{\mathfrak{p}})$ modulo the space of degree $n$ polynomials in the homogeneous ideal of $Y$ in $\mathbb{P} (1^4, 2^5)$ with a computer algebra system (Macaulay2). Note that $\CC [x_1, \dots , x_4, y_1, \dots , y_5 ] / I$ is indeed exactly the canonical ring of $Y$, thus its elements give the pluricanonical sections on $Y$.

We obtain the following vanishing theorem:

\begin{proposition}
Let $\tildeR \in \Pic \tildeX$ be a root, i.e $\tildeR.\tildeK = 0$ and $\tildeR^2=-2$. Then we have
\begin{itemize}
\item $h^0(\tildeR) \not=0$ if and only if $\tildeR \cong C^\pm_i$. In this case $h^1(R) = 1$ and $h^2(R) = 0$.
\item $h^2(\tildeR) \not=0$ if and only if $\tildeK-\tildeR \cong \tildeE \in \{\tildeE_1,\tildeE_2,\tildeE_3^\pm,\dots,\tildeE_5^\pm\}$. In this case $h^1(\tildeR) = 1$ and $h^0(\tildeR) = 0$.
\end{itemize}
\end{proposition}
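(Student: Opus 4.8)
The plan is to prove the vanishing statement by combining Riemann--Roch on the Barlow surface with the explicit section estimates coming from Proposition \ref{pSectionEstimate} and the intersection theory assembled in Section \ref{sCurvesBarlow}. Throughout we use that on $\tildeX$ one has $\tildeK^2 = 1$, $\chi(\sO) = 1$, and that for a root $\tildeR$ (i.e.\ $\tildeR.\tildeK = 0$, $\tildeR^2 = -2$) Riemann--Roch gives $\chi(\tildeR) = \chi(\sO) + \tfrac12(\tildeR^2 - \tildeR.\tildeK) = 1 - 1 = 0$. By Serre duality $h^2(\tildeR) = h^0(\tildeK - \tildeR)$, and $\tildeK - \tildeR$ is again an effective-degree-$1$ class in the sense that $(\tildeK-\tildeR).\tildeK = 1$, so $h^2(\tildeR)\neq 0$ forces $\tildeK-\tildeR$ to be represented by an effective curve of canonical degree $1$. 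Hence the two bullet points are dual to one another under $\tildeR \leftrightarrow \tildeK - \tildeR$, and it suffices to prove the first bullet (the statements $h^1(R)=1$, $h^2(R)=0$ in the first bullet then pin down $h^1$ in the second bullet via $\chi = 0$).

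First I would establish the ``if'' direction of the first bullet: for $\tildeR = \tildeC_i^\pm$, a $(-2)$-curve, the structure sequence $0 \to \sO(-\tildeC_i^\pm) \to \sO \to \sO_{\tildeC_i^\pm} \to 0$ twisted appropriately, together with $\tildeC_i^\pm \cong \PP^1$, gives $h^0(\tildeC_i^\pm) \geq 1$ (the curve itself is effective), $h^2(\tildeC_i^\pm) = h^0(\tildeK - \tildeC_i^\pm) = 0$ since $(\tildeK-\tildeC_i^\pm).\tildeK = 1 > 0$ while $(\tildeK - \tildeC_i^\pm).\tildeC_i^\pm = 2 > 0$ would be needed for effectivity contradictions — more cleanly, $\tildeK - \tildeC_i^\pm$ restricted to the positive cone forces $h^0 = 0$ by a direct check against the known effective curves, or one invokes that $-\tildeC_i^\pm$ cannot be effective so by duality $h^2$ vanishes. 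Then $\chi(\tildeC_i^\pm) = 0$ forces $h^1(\tildeC_i^\pm) = h^0(\tildeC_i^\pm) \geq 1$; and $h^0(\tildeC_i^\pm) = 1$ because a $(-2)$-curve is rigid (its self-intersection being negative means the linear system is a single point). So $h^0 = h^1 = 1$, $h^2 = 0$.

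The main work — and the main obstacle — is the ``only if'' direction: showing that \emph{no other} root $\tildeR$ has $h^0(\tildeR) \neq 0$. Here the strategy is a finiteness-plus-computation argument. Suppose $\tildeR$ is a root with $h^0(\tildeR)\neq 0$; then $\tildeR$ is represented by an effective divisor $D$ with $D.\tildeK = 0$, so $D$ is supported entirely on curves orthogonal to $\tildeK$, i.e.\ on the $(-2)$-curves $\tildeC_i^\pm$ (the only irreducible curves of canonical degree $0$ on a minimal surface of general type). Since $\tildeR^2 = -2$ and the $(-2)$-curves $\tildeC_1^+, \tildeC_1^-, \tildeC_2^+, \tildeC_2^-$ are pairwise disjoint (read off from Proposition \ref{pIntersectionD8effective}: each has self-intersection $-2$ and zero intersection with the others), any effective combination $\sum a_i \tildeC_i^\pm$ with $a_i \geq 0$ has square $-2\sum a_i^2$, which equals $-2$ only when exactly one $a_i$ is $1$ and the rest are $0$. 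Hence $\tildeR \cong \tildeC_i^\pm$ for some $i$, which is what we wanted. The analogous statement over the finite field $\FF_{421}$ and the semicontinuity passage of Remark \ref{rFiniteFieldIntersection} handle the reduction issue. The subtle point to get right is the claim that every irreducible curve with $C.\tildeK = 0$ is one of the four $(-2)$-curves: this uses that $\tildeX$ is minimal of general type (so $\tildeK$ is nef and big, $\tildeK.C \geq 0$ for every curve, with equality iff $C$ is a $(-2)$-curve contracted by the canonical map), together with the explicit fact from Barlow's construction that the canonical map of $\tildeX$ contracts exactly the four $(-2)$-curves $\tildeC_i^\pm$. Once that structural fact is in hand, the rest of the ``only if'' direction is the elementary lattice computation above, and the second bullet follows by Serre duality as explained.
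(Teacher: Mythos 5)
Your argument for the ``only if'' half of the first bullet is correct and is genuinely different from (and more conceptual than) what the paper does: the paper simply checks all $240$ roots by computer over $\Fp$ using Proposition \ref{pSectionEstimate}, whereas you observe that an effective divisor $D$ with $D.\tildeK=0$ must be supported on the irreducible curves of canonical degree zero, which (since $K_X$ is ample, $X$ being the canonical model with exactly four nodes) are precisely the four pairwise disjoint curves $\tildeC_i^\pm$, and then $D^2=-2\sum a_i^2=-2$ forces $D=\tildeC_i^\pm$. That part is a real simplification.

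However, there is a genuine gap in the rest. The two bullets are \emph{not} dual to one another: if $\tildeR$ is a root then $\tildeK-\tildeR$ satisfies $(\tildeK-\tildeR).\tildeK=1$ and $(\tildeK-\tildeR)^2=-1$, so it is not a root, and your classification of effective roots says nothing about which of the $240$ classes $\tildeK-\tildeR$ are effective. An effective representative of such a class has the form $\Gamma_0+\sum a_i\tildeC_i^\pm$ with $\Gamma_0$ irreducible of canonical degree one, and deciding for which roots such a $\Gamma_0$ exists --- e.g.\ whether $\tildeK-\tildeF_i+\tildeF_j$, $2\tildeK-\tildeF_i$, or $\tildeB^\pm_{ijk}-\tildeK$ is effective --- is not a lattice computation; it is exactly the geometric content that the paper extracts from the section bounds of Proposition \ref{pSectionEstimate} over $\Fp$. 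Your proof omits this entirely. The same defect already appears inside your first bullet: the claim $h^2(\tildeC_i^\pm)=h^0(\tildeK-\tildeC_i^\pm)=0$ concerns effectivity of a degree-one class with $\chi=0$, and your proposed shortcut (``$-\tildeC_i^\pm$ is not effective, so by duality $h^2$ vanishes'') misapplies Serre duality, since $h^0(-\tildeC_i^\pm)$ computes $h^2(\tildeK+\tildeC_i^\pm)$, not $h^2(\tildeC_i^\pm)$. So the structure of your argument is sound for identifying the effective roots, but the $h^2$ statements in both bullets still require the explicit effectivity/non-effectivity check of the degree-one classes $\tildeK-\tildeR$, which is where the paper's computation is actually indispensable.
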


\begin{proof}
The if part is obvious. The reverse can be checked for all $240$ roots by calculating directly over $\Fp$ using Proposition \ref{pSectionEstimate}.
\end{proof}

We obtain furthermore

\begin{theorem}\xlabel{tSequence}
The sequence of line bundles given in Proposition \ref{pSequence} is exceptional on $S$: $\mathrm{RHom}^{\bullet} (\mathcal{L}_j, \mathcal{L}_i ) =0$ for $j > i$.
\end{theorem}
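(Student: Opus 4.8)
The plan is to establish the required vanishing $\mathrm{RHom}^\bullet(\mathcal{L}_j, \mathcal{L}_i) = 0$ for $j > i$ by checking it on the level of individual cohomology groups, reducing everything to dimension counts of spaces of sections which we can bound computationally via Proposition \ref{pSectionEstimate}. First I would recall that for line bundles on a surface $\mathrm{RHom}^\bullet(\mathcal{L}_j, \mathcal{L}_i)$ is computed by $H^\bullet(S, \mathcal{L}_i - \mathcal{L}_j)$ sitting in degrees $0, 1, 2$, and that we already know the Euler characteristic: by Proposition \ref{pSequence} we have $\chi(\mathcal{L}_i - \mathcal{L}_j) = 0$ for $j > i$. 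Hence it suffices to show $h^0(S, \mathcal{L}_i - \mathcal{L}_j) = 0$ and $h^2(S, \mathcal{L}_i - \mathcal{L}_j) = 0$ for every pair $j > i$; the middle cohomology $h^1$ then vanishes automatically.

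The vanishing $h^2(S, \mathcal{L}_i - \mathcal{L}_j) = 0$ is by Serre duality equivalent to $h^0(S, \tildeK - \mathcal{L}_i + \mathcal{L}_j) = 0$, so both conditions are of the same shape: vanishing of $H^0$ of an explicit divisor class. For each such class $D$, I would write $D \equiv n\tildeK - P$ with $P$ effective using the integer programming method referenced in the text (\cite{BBKS12}), split off the $(-2)$-curve components $P = P' + \sum a_i \tildeC_i$, and apply the second inequality of Proposition \ref{pSectionEstimate}: $h^0(S, D) \le \dim\bigl(H^0(Y_{\mathfrak p}, \mathcal{O}(nK_{Y_{\mathfrak p}} - P''_{\mathfrak p}))^G\bigr)$, where $\mathfrak p = 421$ and $P'' = \tilde\gamma(\tilde p^\ast P')$. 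The right-hand side is computed in Macaulay2 as the dimension of the space of $D_{10}$-invariant degree-$n$ forms on $\PP(1^4, 2^5)$ vanishing on $p^\ast(P'_{\mathfrak p})$, modulo the ideal of $Y_{\mathfrak p}$. If every one of these dimensions comes out to be $0$, the theorem follows; and since the $\mathcal{L}_i$ lift to line bundles over a number ring, Remark \ref{rFiniteFieldIntersection} promotes the characteristic-$p$ statement to characteristic $0$.

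The main obstacle is a subtlety that is not purely mechanical: the bound in Proposition \ref{pSectionEstimate} is only an \emph{upper} bound, so if for some pair the computed dimension is positive, the argument as stated is inconclusive and one must either find a better effective representative $P$ (a different solution of the integer program, subtracting more $(-2)$-curves or using a sharper decomposition $D \equiv n\tildeK - P$) or argue directly that the extra sections cannot survive on $S$. So the real content of the proof is producing, for each of the finitely many relevant classes $\mathcal{L}_i - \mathcal{L}_j$ and $\tildeK - \mathcal{L}_i + \mathcal{L}_j$ with $j > i$, an effective representative sharp enough that the invariant section count on $Y_{\mathfrak p}$ is exactly zero. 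The choice of the sequence in Proposition \ref{pSequence} — engineered so that the differences have small canonical degree, cf.\ the discussion after Proposition \ref{pNumerics} — is precisely what makes this feasible. I would organize the verification as a finite table of classes together with their $n$, $P$, $P''$, and computed dimension, with the computation delegated to the scripts at \cite{BBKS12}; the write-up then amounts to asserting that in every case the bound is $0$ and invoking $\chi = 0$ to conclude $h^1 = 0$ as well.
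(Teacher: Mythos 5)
Your proposal reconstructs exactly the argument the paper intends: $\chi=0$ from Proposition \ref{pSequence}, reduction of $h^0$ and $h^2$ (via Serre duality) to section counts bounded by Proposition \ref{pSectionEstimate} over $\FF_{421}$, delegation of the finite list of checks to the Macaulay2 scripts, and semicontinuity to return to characteristic zero. This matches the paper's (implicit, computation-backed) proof, including the observation that the content lies in choosing the effective representatives $P$ sharply enough that the upper bounds come out to be zero.
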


\section{The deformation argument and and existence of the phantoms}\xlabel{sDeformation}

In this section we will prove the existence of phantom categories in $\mathrm{D}^b (S_t)$ where $S_t$ is generic in the moduli space of determinantal Barlow surfaces in a small neighbourhood of the distinguished Barlow surface $S=S_0$ of Section \ref{sConstruction}. 

\begin{lemma}\xlabel{lDeformation}
Let $S_t$ be a generic determinantal Barlow surface in a small neighbourhood of $S$. Then there is an exceptional sequence $(\mathcal{L}_{1, t}, \dots , \mathcal{L}_{11, t})$ in $\mathrm{D}^b (S_t)$ consisting of line bundles $\mathcal{L}_{i, t}$ which are deformations of the $\mathcal{L}_i$.
\end{lemma}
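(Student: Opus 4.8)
The plan is to combine semicontinuity of cohomology in a family with the rigidity of line bundles on a surface with $q = 0$. Consider the universal family $\mathcal{S} \to T$ of determinantal Barlow surfaces over a small base $T$ containing $0$ with $\mathcal{S}_0 = S$; all the line bundles $\mathcal{L}_i$ are algebraic and defined over the relevant number field, and since $H^1(S, \mathcal{O}_S) = H^2(S, \mathcal{O}_S) = 0$ the restriction map $\mathrm{Pic}(\mathcal{S}/T) \to \mathrm{Pic}(S_t)$ is, after possibly shrinking $T$, an isomorphism onto a deformation-invariant part: more concretely, each $\mathcal{L}_i$ extends to a line bundle $\widetilde{\mathcal{L}_i}$ on $\mathcal{S}$ whose restriction $\mathcal{L}_{i,t}$ to $S_t$ is the desired deformation (uniqueness follows from $h^1(\mathcal{O}_{S_t}) = 0$). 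First I would make this extension statement precise, using that $\mathrm{Pic}^0$ is trivial and $H^2(\mathcal{O})$ vanishes so that there are no obstructions to deforming each line bundle and the deformation is unique.

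Next I would feed this into the exceptionality condition. We must show $\mathrm{RHom}^\bullet(\mathcal{L}_{j,t}, \mathcal{L}_{i,t}) = 0$ for $j > i$ and $\mathrm{RHom}^\bullet(\mathcal{L}_{i,t}, \mathcal{L}_{i,t}) = \CC$. Write $\mathcal{M}_{ij} = \widetilde{\mathcal{L}_j}^\vee \otimes \widetilde{\mathcal{L}_i}$ on $\mathcal{S}$; then $\mathrm{RHom}^\bullet(\mathcal{L}_{j,t}, \mathcal{L}_{i,t}) = \mathrm{R}\Gamma(S_t, \mathcal{M}_{ij}|_{S_t})$. By Theorem \ref{tSequence} this complex vanishes at $t = 0$ for $j > i$, i.e.\ $h^0 = h^1 = h^2 = 0$ on $S_0$. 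By upper-semicontinuity of $t \mapsto h^q(S_t, \mathcal{M}_{ij}|_{S_t})$ on $T$, each of these stays $0$ in a (Zariski, hence also analytic) neighbourhood of $0$; intersecting the finitely many such neighbourhoods over all pairs $i < j$ gives a single neighbourhood on which the full off-diagonal vanishing holds. For the diagonal entries, $h^0(S_t, \mathcal{O}_{S_t}) = 1$ and $h^2(S_t, \mathcal{O}_{S_t}) = p_g(S_t) = 0$ are constant, and $h^1(S_t, \mathcal{O}_{S_t}) = q(S_t) = 0$ is likewise constant in the family of Barlow surfaces, so $\mathrm{RHom}^\bullet(\mathcal{L}_{i,t}, \mathcal{L}_{i,t}) = \CC$ for all $t$ near $0$. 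Shrinking $T$ to the intersection of all these open sets, the sequence $(\mathcal{L}_{1,t}, \dots, \mathcal{L}_{11,t})$ is exceptional on $S_t$ for all $t$ in this neighbourhood; genericity is not even needed at this stage, only smallness of the neighbourhood.

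The main obstacle is not the semicontinuity argument itself but the bookkeeping needed to guarantee the extensions $\widetilde{\mathcal{L}_i}$ exist as honest line bundles on the total space $\mathcal{S}$ — one must know the moduli space and its universal family well enough (as recalled in Section \ref{sConstruction} via \cite{Cat81}, \cite{Lee00}), and one must handle the $(-2)$-curves and the $4$ nodes of $X$ correctly, since the $\mathcal{L}_i$ involve the exceptional curves $\tildeC_i^\pm$ of the minimal resolution. The cleanest route is to work on the simultaneous resolution of the family of nodal quotients $X_t$ (which exists after a finite base change, the nodes being of type $A_1$), so that the $(-2)$-curves and the curves $\tildeE_i$, $\tildeB^\pm_{ijk}$ deform in families; then every $\mathcal{L}_i$, being an integer combination of classes of such curves and of $\tildeK$, deforms tautologically. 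I would state this reduction and then invoke semicontinuity as above; the genericity hypothesis in the lemma is kept only to stay consistent with the later statements, and can in fact be dropped here.
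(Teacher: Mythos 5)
Your proposal is correct and follows essentially the same route as the paper, which simply notes that the line bundles deform and that upper-semicontinuity of cohomology preserves exceptionality. The extra care you take (extending the $\mathcal{L}_i$ over the total space via $h^1(\mathcal{O}_S)=h^2(\mathcal{O}_S)=0$, and passing to a simultaneous resolution of the nodal family) fills in details the paper leaves implicit.
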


\begin{proof}
Consider a small nontrivial deformation of $S$ (one  deformation parameter $t$ for simplicity) among determinantal Barlow surfaces: 
\[
\xymatrix{S \ar@{=}[r]& \mathcal{S}_0 \ar@{ (->}[r] \ar[d] & \mathcal{S}\ar[d]  \\
& 0 \ar@{ (->}[r] & B}
\]
The line bundles $\mathcal{L}_i$ deform to line bundles $\mathcal{L}_{i,t}$ and by upper semicontinuity, the $\mathcal{L}_{i,t}$ are also an exceptional sequence.
\end{proof}

Consider now $\mathbb{L}_t = \bigoplus_{i=1}^{11} \mathcal{L}_{i, t}$ and the differential graded algebra $\mathfrak{A}_t = \mathrm{RHom}^{\bullet} (\mathbb{L}_t, \mathbb{L}_t)$ of derived endomorphisms of the exceptional sequence \[ (\mathcal{L}_{1,t}, \dots , \mathcal{L}_{11,t})\] above. It has a minimal model in the sense of \cite[Sect.\ 3.3]{Keller01}, that is, we consider the Yoneda algebra $H^* (\mathfrak{A}_t)$ together with its $A_{\infty}$-structure such that $m_1=0$, $m_2=$Yoneda multiplication and there is a quasi-isomorphism of $A_{\infty}$-algebras $\mathfrak{A}_t \simeq H^* (\mathfrak{A}_t)$ lifting the identity of $H^* (\mathfrak{A}_t)$. We will show  

\begin{proposition}\xlabel{pRigidity}
In the neighbourhood of a generic value of the deformation parameter $t$, the algebra $H^*(\mathfrak{A}_t)$  is constant. Hence, the subcategories $\langle \mathcal{L}_{1,t}, \dots , \mathcal{L}_{11, t}  \rangle$ are all equivalent in a neighbourhood of a generic value of $t$.
\end{proposition}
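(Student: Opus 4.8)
\textbf{Proof strategy for Proposition \ref{pRigidity}.}

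The plan is to combine two facts: (a) the \emph{dimensions} of the cohomology groups $H^*(\mathfrak{A}_t) = \bigoplus_{i,j}\mathrm{Ext}^\bullet(\mathcal{L}_{i,t},\mathcal{L}_{j,t})$ are upper semicontinuous in $t$, hence constant on a dense open locus, so that the \emph{graded vector space} underlying the Yoneda algebra is locally constant near a generic $t$; and (b) the $A_\infty$-structure on $H^*(\mathfrak{A}_t)$, being encoded by finitely many structure constants (the Taylor coefficients of the higher products $m_n$ relative to a chosen basis of the Ext-groups) which vary \emph{algebraically} with $t$, can only take finitely many values up to $A_\infty$-isomorphism on a generic stratum — indeed I expect it to be literally constant. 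The key point making (b) workable is that all the Ext-groups in question, and the multiplication maps between them, are computable from the explicit line bundles on the explicit family of determinantal Barlow surfaces; the cohomology-dimension computations will be exactly the ones carried out via Proposition \ref{pSectionEstimate} (now applied in the family over $B$), and the semicontinuity statements hold over $B$ just as over $\mathrm{Spec}(\ZZ)$.

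First I would set up the family-version of the cohomology computation: over the base $B$, $\mathbb{L} = \bigoplus \mathcal{L}_{i}$ is a sheaf on $\mathcal{S}$ flat over $B$, and $\mathfrak{A} = \mathrm{R}\mathcal{H}om^\bullet_{\mathcal{S}/B}(\mathbb{L},\mathbb{L})$ is a perfect complex of $\mathcal{O}_B$-modules computing fibrewise $\mathrm{RHom}$. Base change and upper semicontinuity of $\dim H^k$ of the fibres give a dense open $B^\circ \subset B$ on which all the numbers $\dim\mathrm{Ext}^k(\mathcal{L}_{i,t},\mathcal{L}_{j,t})$ are constant; by the main theorem's numerical input (Theorem \ref{tSequence}, which persists under deformation by Lemma \ref{lDeformation}) the $\mathrm{Ext}^\bullet(\mathcal{L}_{j,t},\mathcal{L}_{i,t})$ vanish for $j>i$, and one reads off the remaining dimensions from the cohomology data computed in this section. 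On $B^\circ$ one can then choose a trivialization of each graded piece of $\mathcal{H}^*(\mathfrak{A})$, so $H^*(\mathfrak{A}_t)$ is, as a graded algebra with Yoneda product $m_2$, given by structure constants that are regular functions of $t$ on $B^\circ$.

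Next I would invoke Kadeishvili's/Keller's construction of the minimal model functorially: the higher products $m_n$ on $H^*(\mathfrak{A}_t)$ are obtained from $\mathfrak{A}_t$ by the standard homotopy-transfer (tree-summation) formulas, which are polynomial in the (finitely many) matrix entries of the differential and multiplication of $\mathfrak{A}$ and of a chosen contracting homotopy, all of which can be taken to vary algebraically in $t$ over a possibly smaller dense open. Hence the tuple of all structure constants $(m_n)_{n\ge 2}$ of $H^*(\mathfrak{A}_t)$ traces out a constructible family over $B^\circ$; shrinking to a dense open $B^{\circ\circ}$ we may assume it is \emph{continuous}, and since on a small enough neighbourhood of a generic point the only constraint is compatibility with the fixed graded dimensions, I claim (and would verify by the explicit computation, exactly as \cite{A-O12} do for Burniat surfaces) that the $A_\infty$-algebra is in fact constant on a neighbourhood of a generic $t$. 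Finally, once $H^*(\mathfrak{A}_{t_1}) \cong H^*(\mathfrak{A}_{t_2})$ as $A_\infty$-algebras, the equivalence $\langle\mathcal{L}_{1,t_1},\dots,\mathcal{L}_{11,t_1}\rangle \simeq \langle\mathcal{L}_{1,t_2},\dots,\mathcal{L}_{11,t_2}\rangle$ follows from the standard fact (e.g.\ via \cite{Keller01}) that the triangulated subcategory generated by an exceptional sequence is equivalent to the perfect derived category of the dg algebra $\mathfrak{A}_t$, which in turn depends only on the quasi-isomorphism type of $\mathfrak{A}_t$, i.e.\ on the $A_\infty$-isomorphism type of its minimal model $H^*(\mathfrak{A}_t)$.

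The main obstacle is the passage from ``finitely many isomorphism classes on a generic stratum'' to ``constant near a generic point'': a priori the structure constants vary, and one must rule out genuine jumps. I expect this to be handled exactly as in \cite{A-O12}: carry out the explicit computation of the Yoneda products for the basis of Ext-groups coming from the curves of Section \ref{sCurvesBarlow}, observe that the relevant products are forced — up to the obvious change-of-basis (rescaling) action — by the combinatorics of the configuration, which does not change in the family, and conclude constancy. The secondary technical point is ensuring the homotopy $h$ used in homotopy transfer can be chosen algebraically over $B^\circ$; this is automatic once the cohomology dimensions are constant, since one can then split the complex $\mathfrak{A}$ compatibly with the splitting of each cohomology sheaf as a subbundle and quotient bundle.
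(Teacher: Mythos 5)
Your setup (flatness over the base, base change, upper semicontinuity of the $\dim\mathrm{Ext}^k(\mathcal{L}_{i,t},\mathcal{L}_{j,t})$, homotopy transfer to the minimal model with structure constants varying algebraically in $t$) is all consistent with what the paper does, but you stop exactly at the point where the actual proof begins. You correctly identify the main obstacle --- ruling out genuine variation of the $A_\infty$-isomorphism class even when the graded dimensions are constant --- and then you do not close it: ``I claim (and would verify by the explicit computation)\ldots that the $A_\infty$-algebra is in fact constant'' is a deferral, not an argument. Moreover, your fallback reasoning is not sound as stated: constructibility of the structure constants over $B^\circ$ does not give finitely many isomorphism classes on a generic stratum, since families of (even associative) algebras with fixed graded dimensions can have continuously varying isomorphism type; and the suggestion that the products are ``forced up to rescaling by the combinatorics'' points at the wrong mechanism.

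The paper's actual resolution is a pure degree-counting argument from the cohomology data of Lemma \ref{lRigidity}, which shows there is nothing to compute at all: every product vanishes. Concretely, the nonzero forward $\mathrm{Ext}$'s sit in degree $1$ (circle $\to$ square, and possibly $\mathcal{L}_{1,t}\to\mathcal{L}_{11,t}$) or degree $2$ (square $\to$ circle), so (i) $m_2$ on any composable pair of morphisms between pairwise distinct objects lands in degree $\geq 3$, hence is zero on a surface; (ii) for $d\geq 6$ one of the six consecutive $\mathrm{RHom}$ spaces is zero outright (there are only two ``squares''); and (iii) for $d=3,4,5$ the minimal total degree of a nonzero composable $d$-tensor is $4,6,7$ respectively, while $m_d$ has degree $2-d$, so the target degree is always $>2$ and $m_d=0$. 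Hence $H^*(\mathfrak{A}_t)$ is a graded algebra with all products (other than those involving identities) equal to zero, and it is determined by its graded dimensions, which are constant near a generic $t$ once the ambiguous entries $\ast$ of Lemma \ref{lRigidity} are frozen by genericity. This vanishing argument is the substance of the proposition, and it is missing from your proposal.
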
 

In fact, we are dealing here with an $A_{\infty}$-category on the $11$ objects $\mathcal{L}_{i,t}$. Let us recall now some facts about $A_{\infty}$-categories which we need to prove Proposition \ref{pRigidity}. A possible reference is the first chapter in Seidel's book \cite{Seidel}. In particular, in an $A_{\infty}$-category we are given a set of objects $X_i$ with a graded vector space $\mathrm{hom} (X_0, X_1)$ for any pair of objects, and composition maps of every order $d\ge 1$
\[
\mathrm{hom} (X_0, X_1) \otimes \mathrm{hom} (X_1, X_2) \otimes \dots \otimes \mathrm{hom}(X_{d-1}, X_d) \to \mathrm{hom} (X_0, X_d)[2-d] 
\]
satisfying the $A_{\infty}$-associativity equations, whose precise form we actually need not know here. The important point is that $m_d$ is homogeneous of degree $2-d$. Another important point is cf. \cite[Lem.\ 2.1]{Seidel} that any homotopy unital $A_{\infty}$-category is quasi-isomorphic to a strictly unital one, i.e. we may assume
\[
m_d (a_0\otimes \dots \otimes a_{i-1}\otimes \mathrm{id} \otimes a_{i+1} \otimes \dots \otimes a_d ) = 0, d\ge 3
\]
which means $m_d$, $d\ge 3$, is zero as soon as one of its arguments is a homothetic automorphism of an object.

We use

\begin{lemma}\xlabel{lRigidity}
The following matrix describes the $\mathrm{Ext}^n (\mathcal{L}_{i, t}, \mathcal{L}_{j, t} )$ arising from the exceptional sequence. More precisely, the $(i,j)$-entry of the matrix is $[\dim \mathrm{Hom} (\mathcal{L}_{i, t}, \mathcal{L}_{j, t}), \dim \mathrm{Ext}^1 (\mathcal{L}_{i, t}, \mathcal{L}_{j, t}), \dim \mathrm{Ext}^2(\mathcal{L}_{i, t}, \mathcal{L}_{j, t}) ]$. We call this three-tuple a \emph{cohomology datum} for short. We just write $0$ for the trivial cohomology datum $[0,0,0]$.
{ \tiny
\[
\left( 
\begin{array}{ccccccccccc}
[1,0,0] &  0  &   0  &   0  &   0 &   0  &   0   &   0   &   0  &    0 &    0\\
0  & [1,0,0] & 0 &  0  &  0 &   0  &  0 &  0 &   0  &  0  & 0 \\
 \, [0,1,0] &  [0,1,0] &  [1,0,0] & 0 &  0 &  0 &  0 &  0 &  0 &  0 & 0 \\
0 &  0 &  [0,0,1]  & [1,0,0] & 0 &   0  &  0 &  0 &  0 &  0 & 0  \\
0 &  0 &  [0,0,1] & 0 &  [1,0,0] &  0 &  0 &  0 &  0 &  0  & 0 \\
 0 &  0 &  [0,0,1] &  0 &  0 &  [1,0,0] & 0 &  0 &  0 &  0 & 0   \\
 \, [0,1,0] & [0,1,0] & 0 &   [0,1,0] & [0,1,0] & [0,1,0] & [1,0,0] & 0 &  0 &  0 & 0   \\
 0  & 0 &  [0,0,1] &  0 &   0 &   0 &  [0,0,1] &  [1,0,0] &  0 &   0 & 0   \\
 0 &   0 &   [0,0,1] &  0 &   0 &   0 &  [0,0,1] &  0 &  [1,0,0] &  0 & 0   \\
 0 & 0 & [0,0,1] &  0 & 0  & 0 & [0,0,1] & 0 & 0 & [1,0,0] & 0 \\
 \, \ast & \ast &   [0,0,1] & \ast &  \ast  & \ast &    [0,0,1] &  \ast & \ast & \ast &   [1,0, 0] 
\end{array} \right)
\]
}
Here the entry $\ast$ means either $[0,0,0]$ or $[0,1,1]$. 
\end{lemma}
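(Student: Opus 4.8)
The plan is to reduce each cohomology datum to the cohomology of an explicit line bundle, to compute it on $S=S_0$ using Riemann--Roch, Serre duality, exceptionality and the section estimates of Proposition \ref{pSectionEstimate}, and finally to pass to a generic $S_t$ by semicontinuity together with the constancy of $\chi$.

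First, for two members $\mathcal{L}_{a,t},\mathcal{L}_{b,t}$ of the sequence the relevant directed $\mathrm{Hom}$-complex is $\mathrm{RHom}^\bullet(\mathcal{L}_{b,t},\mathcal{L}_{a,t})=H^\bullet(S_t,\mathcal{N})$ with $\mathcal{N}=\mathcal{L}_{a,t}\otimes\mathcal{L}_{b,t}^{-1}$, and the alternating sum of the associated cohomology datum is $\chi(\mathcal{N})$, which by Riemann--Roch depends only on numerical invariants, so it equals the corresponding entry of the matrix of Proposition \ref{pSequence} and is independent of $t$. On the diagonal $\mathcal{N}=\mathcal{O}_{S_t}$ and $p_g(S_t)=q(S_t)=0$, which gives $[1,0,0]$. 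The entries in the ``backward'' direction vanish because $(\mathcal{L}_{1,t},\dots,\mathcal{L}_{11,t})$ is an exceptional sequence (Lemma \ref{lDeformation}; at $t=0$ this is Theorem \ref{tSequence}). So the content of the lemma is the ``forward'' entries, and for each of them it suffices to determine $h^0(\mathcal{N})$ and $h^2(\mathcal{N})$ on $S=S_0$: then $h^1(\mathcal{N})=h^0(\mathcal{N})+h^2(\mathcal{N})-\chi(\mathcal{N})$ is forced.

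For each forward class $\mathcal{N}$ I would write $\mathcal{N}\equiv nK_S-P$ with $P$ effective and bound $h^0(\mathcal{N})$ from above by the $\Fp$-computation of Proposition \ref{pSectionEstimate}; by Serre duality $h^2(\mathcal{N})=h^0(\tildeK-\mathcal{N})$, and $\tildeK-\mathcal{N}$ is treated the same way. In practice $\tildeK-\mathcal{N}$ is, in the non-vanishing cases, one of the explicit effective elliptic curves or $(-2)$-curves of Section \ref{sCurvesBarlow}, which have $h^0=1$, so that $h^2(\mathcal{N})=1$; and for classes $\mathcal{N}$ that are roots one may invoke instead the cohomology computation for roots given just before Theorem \ref{tSequence}. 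In every case not marked $\ast$ this pins $h^0(\mathcal{N})$ and $h^2(\mathcal{N})$ down exactly, hence the datum. For an $\ast$ entry the same estimates give $h^0(\mathcal{N})=0$ and $h^2(\mathcal{N})\le 1$, and since $\chi(\mathcal{N})=0$ the only survivors are $[0,0,0]$ and $[0,1,1]$ --- precisely the stated dichotomy.

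Finally, for the passage to a generic $S_t$: by upper semicontinuity the dimensions $h^k(S_t,\mathcal{N})$ never drop below their generic value, so for $t$ in a dense open subset of the base $B$ each datum is componentwise $\le$ the datum at $t=0$, while $\chi(\mathcal{N})$ stays constant. Every definite entry in the list has at least two vanishing components, and a non-negative integer triple with a prescribed pair of vanishing components is determined by its alternating sum; hence these entries cannot change along the family, and the $\ast$ entries stay inside $\{[0,0,0],[0,1,1]\}$. I expect the main obstacle to be the explicit part of the third step: for each of the finitely many forward classes $\mathcal{N}$, and its Serre dual, one has to exhibit it as $nK_S-P$ with $P$ effective and to check that the Macaulay2 bound over $\Fp$ is sharp; this is exactly where the intersection theory and the curve configuration of Section \ref{sCurvesBarlow}, and the specific choice of the prime $421$, are needed.
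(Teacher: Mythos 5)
Your proposal is correct and follows essentially the same route as the paper, whose proof consists precisely of an explicit calculation of the cohomology data for the sequence at $t=0$ followed by an appeal to upper-semicontinuity. Your elaboration of the semicontinuity step (each definite entry has two vanishing components, so constancy of $\chi$ pins down the third) is exactly the intended way to make the paper's one-line deduction precise.
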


\begin{proof}
We check this for the sequence $(\mathcal{L}_{1}, \dots , \mathcal{L}_{11})$ by explicit calculation. The general statement follows by upper-semicontinuity. 
\end{proof}

\begin{remark}\xlabel{rVisual}
It is helpful to think of the degree $0$ line bundles $\mathcal{L}_{i, t}$, $i\neq 3, 7$, as ``circles" and of the degree $1$ line bundles $\mathcal{L}_{3,t}$, $\mathcal{L}_{7, t}$ as ``squares". Then the assertion of Lemma \ref{lRigidity} can be paraphrased as saying that there are no derived homomorphisms between circles except from the first to the last (eleventh) circle where one can have the cohomology datum $[0,1,1]$. The squares are completely orthogonal, and the derived homomorphisms of a circle into a square (in the forward direction) have $\chi=-1$ and cohomology datum $[0,1,0]$, whereas derived homomorphisms of a square into a circle (in the forward direction) have $\chi=1$ and cohomology datum $[0,0,1]$. In the proof of Proposition \ref{pRigidity} we will first show that $H^* (\mathfrak{A}_t)$ has no higher multiplication and then that the algebra structure is also fixed in the neighbourhood of a generic point. It is instructive to think of pictures like the following illustrating a potential composition for $m_4$:
\vspace{1cm}
\[
\xymatrix{
\circ \ar@/^2pc/[rr]^{\chi = -1, \: [0,1,0]}  \ar@/_2pc/[rrrrrrrrrr]_{\chi =0, \: [0,1,1]\: \mathrm{or} \: [0,0,0]} & \circ   & \square \ar@/^2pc/[rr]^{\chi = 1, \: [0,0,1]} & \circ & \circ \ar@/^2pc/[rr]^{\chi = -1, \: [0,1,0]}& \circ & \square \ar@/^2pc/[rrrr]^{\chi = 1, \: [0,0,1]} & \circ & \circ & \circ & \circ 
}
\]
\vspace{1cm}
\end{remark}

\begin{proof}(of Proposition \ref{pRigidity}) 
We choose $t$ in a neighbourhood of a generic point in the moduli space of determinantal Barlow surfaces; hence we can assume that the matrix of cohomology data in Lemma \ref{lRigidity} is constant (i.e. there are no changes of the entries $\ast$ in the matrix). 

We think of the $\mathcal{L}_{i, t}$ as the objects of our $A_{\infty}$-category. It is clear that
\[
m_2 : \mathrm{hom} (X_0, X_1) \otimes \mathrm{hom} (X_1, X_2) \to \mathrm{hom} (X_0, X_2)
\]
is always the zero map in our case if $X_0$, $X_1$, $X_2$ are pairwise different; in fact, the only way to get a potentially nonzero composition would be to compose a morphism from a circle to a square with a morphism from that square to the last circle, which is impossible because this is a degree $3$ morphism; or to compose a morphism from a square to a circle with a morphism from that circle to the last circle, but this is also at least of degree $3$.

Hence it suffices to prove that there is no higher multiplication, i.e. $m_i = 0$ for $i \ge 3$. Then the endomorphism algebra of our category is just a usual graded algebra, and the algebra structure is completely determined and does not deform.

Clearly, $m_d=0$ for $d\ge 6$: in fact, if $i< j<k<l<m <n < o$, one of the spaces
\begin{gather*}
\mathrm{RHom}^{\bullet} (\mathcal{L}_{i, t},\;  \mathcal{L}_{j, t}), \mathrm{RHom}^{\bullet} (\mathcal{L}_{j, t}, \mathcal{L}_{k, t})\\
\mathrm{RHom}^{\bullet} (\mathcal{L}_{k, t},\;  \mathcal{L}_{l, t}), \mathrm{RHom}^{\bullet} (\mathcal{L}_{l, t}, \mathcal{L}_{m, t}), \; \mathrm{RHom}^{\bullet} (\mathcal{L}_{m, t}, \mathcal{L}_{n, t}), \; \mathrm{RHom}^{\bullet} (\mathcal{L}_{n, t}, \mathcal{L}_{o, t})
\end{gather*}
is the zero space. 

Now look at $m_5$: the smallest degree of a nonzero element in a space
\[
 \mathrm{hom} (\mathcal{L}_{i, t}, \mathcal{L}_{j, t}) \otimes \mathrm{hom} (\mathcal{L}_{j, t}, \mathcal{L}_{k, t}) \otimes \mathrm{hom} (\mathcal{L}_{k, t}, \mathcal{L}_{l, t}) \otimes \mathrm{hom} (\mathcal{L}_{l, t}, \mathcal{L}_{m, t}) \otimes \mathrm{hom} (\mathcal{L}_{m, t}, \mathcal{L}_{n, t} ) 
\]
for $i<j<k<l<m<n$ is equal to $7$. But $m_5$ lowers the degree by $3$, and there are no $\mathrm{Ext}^4$'s.

For $m_4$ resp. $m_3$ we argue similarly: the lowest degrees of nonzero elements in the spaces of $4$ resp. $3$ composable morphisms are $6$ resp. $4$; but $m_4$ lowers the degree by $2$ and $m_3$ lowers the degree by $1$. 
\end{proof}

We will need the following special case of a result by Voisin, see \cite{Voi}.
\begin{theorem}\xlabel{cBloch}
The generic determinantal Barlow surface $\tilde{X}$ of Section \ref{sConstruction} satisfies the Bloch conjecture $\mathrm{CH}^2 (\tilde{X}) = \ZZ$.
\end{theorem}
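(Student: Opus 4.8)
The statement to prove is Theorem~\ref{cBloch}: the generic determinantal Barlow surface $\tilde X$ satisfies $\mathrm{CH}^2(\tilde X) = \ZZ$.

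The plan is to derive this from the already-established case $\mathrm{CH}^2(S) = \ZZ$ for the special Barlow surface $S = S_0$ (this is Barlow's theorem, cited in Remark~\ref{rInvariants}) by a specialization/spreading-out argument, using the general principle — going back to Bloch--Srinivas and made systematic by Voisin in \cite{Voi} — that triviality of $\mathrm{CH}_0$ (equivalently $\mathrm{CH}^2$ for a surface) of a very general fibre in a family spreads out to the general fibre, once one knows it for \emph{one} fibre, provided the family is nice enough (smooth projective over a variety over an uncountable field). Concretely, I would consider the universal family $\pi\colon \mathcal{X} \to B$ of determinantal Barlow surfaces over the $2$-dimensional (irreducible) moduli base $B$, which is smooth projective, with $\mathcal{X}_0 = S$. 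Since $p_g = q = 0$ for every fibre, the relative Albanese is trivial and the transcendental obstruction vanishes on cohomology; what remains is exactly the setting of Voisin's theorem, which says that if $\mathrm{CH}^2$ of the very general fibre is $\ZZ$ then so is $\mathrm{CH}^2$ of \emph{every} fibre outside a countable union of proper closed subsets — or, in the sharper form she provides (and which is what is being invoked here), that triviality of $\mathrm{CH}_0$ for one fibre with $p_g = 0$ forces it for the general fibre.

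The key steps, in order, are: (i) exhibit the determinantal Barlow surfaces as the fibres of a smooth projective morphism $\pi\colon \mathcal{X}\to B$ over an irreducible base $B$ with $S$ among the fibres — this is essentially already set up in Section~\ref{sConstruction}, where the construction depends algebraically on the parameters $a_1,\dots,a_6$ and the even-set-of-nodes / double-cover data, so one just needs to note that after a suitable stratification of parameter space the construction is a family; (ii) record that all fibres have $p_g = q = 0$, so $H^2_{\mathrm{tr}}(\mathcal{X}_b) = 0$ and the only possible ``extra'' $0$-cycles come from the algebraic part, i.e.\ the relevant hypotheses of Voisin's theorem in \cite{Voi} are met uniformly in the family; (iii) invoke Barlow's result $\mathrm{CH}^2(S) = \ZZ$ for the single fibre $S = \mathcal{X}_0$; (iv) apply Voisin's theorem to conclude $\mathrm{CH}^2(\mathcal{X}_b) = \ZZ$ for $b$ general in $B$, which is the assertion. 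One should also note the degree-$1$ point class is primitive (as in Remark~\ref{rInvariants}), so ``$= \ZZ$'' is the right normalization and there is no torsion to worry about.

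The main obstacle — and the reason the authors thank Claire Voisin for making \cite{Voi} available — is step (iv): the ``one fibre suffices'' spreading-out for $\mathrm{CH}_0$ is genuinely subtle and is not the naive Bloch--Srinivas ``countable union'' statement (which would only give triviality for a very general, not a general, fibre, and in any case would need triviality assumed on a very general fibre rather than deduced from a special one). It relies on Voisin's refined analysis of how the vanishing of $\mathrm{CH}_0$-triviality obstructions propagates, exploiting precisely that $p_g$ vanishes throughout the family; verifying that the determinantal Barlow family satisfies the exact technical hypotheses of her theorem (smoothness of total space, properness, the cohomological conditions, irreducibility of the base) is where the real content lies, and for that one simply cites \cite{Voi} with the specialization input $\mathrm{CH}^2(S)=\ZZ$ from \cite{Barlow2}. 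Everything else is bookkeeping about the family constructed in Section~\ref{sConstruction}.
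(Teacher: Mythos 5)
The paper offers no argument here at all: Theorem \ref{cBloch} is stated as ``a special case of a result by Voisin'' and the proof is simply the citation \cite{Voi}. So the only question is whether your proposed derivation is sound, and it is not. The entire weight of your argument rests on step (iv), a propagation principle of the form ``triviality of $\mathrm{CH}_0$ for one fibre of a smooth projective family with $p_g=0$ forces it for the general fibre.'' No such theorem exists, and it is not what Voisin proves. The known implications run in the opposite direction: if the very general (equivalently, geometric generic) fibre has $\mathrm{CH}_0=\ZZ$, then by Bloch--Srinivas it admits a decomposition of the diagonal, which spreads out over a Zariski-open subset of the base and then specializes to every fibre. Knowing $\mathrm{CH}_0(S_0)=\ZZ$ for the single fibre $S_0$ gives a cycle identity on $S_0\times S_0$ that there is no mechanism to deform to nearby fibres; the locus of $b$ with $\mathrm{CH}_0(\mathcal{X}_b)=\ZZ$ has no constructibility property in the direction you need. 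You correctly observe that the naive Bloch--Srinivas countability argument does not do the job, but the ``sharper form'' you then attribute to \cite{Voi} --- that one known fibre suffices --- is a misreading of her result.

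What Voisin actually proves has global-geometric input, not a single known fibre. Roughly: for a family of surfaces with a fibrewise finite group action and a character $\chi$ with $H^{2,0}(\mathcal{S}_b)^{\chi}=0$ (and $q=0$), together with a Hodge-theoretic/geometric hypothesis on a smooth completion of the relative self-product of the family modulo the group, she concludes that $\mathrm{CH}_0(\mathcal{S}_b)^{\chi}=0$ for the fibres. Applied to the Campedelli double covers $V\to X$ with their involutions, over the full two-parameter determinantal base, this yields $\mathrm{CH}^2(\tilde X)=\ZZ$ for the generic determinantal Barlow surface directly; Barlow's computation \cite{Barlow2} for the special surface plays no role as input --- indeed the reason the authors needed \cite{Voi} at all is that Barlow's method covers only special members of the family, and no specialization argument transports her result to the generic member. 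Your steps (i)--(iii) are harmless bookkeeping, but the theorem invoked in step (iv) is not available, so the argument does not close.
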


\begin{corollary}\xlabel{cPhantom}
The exceptional sequence $(\mathcal{L}_{1, t}, \dots , \mathcal{L}_{11, t})$ is not full, in other words, there exists a phantom category $\mathcal{A}_t$ in $\mathrm{D}^b (S_t)$ as in Theorem \ref{tMain} for a surface $S_t$ which is generic in a small neighbourhood of the Barlow surface $S = S_0$ in the moduli space of determinantal Barlow surfaces.
\end{corollary}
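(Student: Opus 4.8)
The plan is to combine the rigidity of the $A_\infty$-structure (Proposition~\ref{pRigidity}) with the invariance of the Grothendieck group and the Bloch conjecture (Theorem~\ref{cBloch}) to force the orthogonal complement to be a phantom. First I would set $\mathcal{A}_t := \langle \mathcal{L}_{1,t}, \dots, \mathcal{L}_{11,t}\rangle^{\perp}$, which is an admissible subcategory of $\mathrm{D}^b(S_t)$ since the sequence is exceptional (hence the subcategory it generates is admissible, being generated by an exceptional collection of line bundles on a smooth projective variety), and the semiorthogonal decomposition $\mathrm{D}^b(S_t) = \langle \mathcal{A}_t, \mathcal{L}_{1,t}, \dots, \mathcal{L}_{11,t}\rangle$ holds. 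The task is then to show $\mathrm{K}_0(\mathcal{A}_t) = 0$ and $\mathrm{HH}_*(\mathcal{A}_t) = 0$, and separately to rule out that $\mathcal{A}_t = 0$.

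For the Grothendieck group: semiorthogonal decompositions are additive on $\mathrm{K}_0$, so $\mathrm{K}_0(\mathrm{D}^b(S_t)) = \mathrm{K}_0(\mathcal{A}_t) \oplus \bigoplus_{i=1}^{11}\mathrm{K}_0(\langle\mathcal{L}_{i,t}\rangle) = \mathrm{K}_0(\mathcal{A}_t)\oplus \ZZ^{11}$. By Remark~\ref{rInvariants}, for the special Barlow surface $\mathrm{K}_0(S) \simeq \ZZ^{11}$; for a generic nearby $S_t$ one argues the same way — $\mathrm{Pic}(S_t)\simeq \ZZ^9$ is constant in the family and the Bloch conjecture $\mathrm{CH}^2(S_t) = \ZZ$ (Theorem~\ref{cBloch}) gives, via the codimension filtration on $\mathrm{K}_0$ exactly as in Remark~\ref{rInvariants}, that $\mathrm{K}_0(S_t)\simeq \ZZ^{11}$. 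Since $\ZZ^{11}$ is free of rank $11$, the direct-sum decomposition forces $\mathrm{K}_0(\mathcal{A}_t) = 0$. For Hochschild homology: $\mathrm{HH}_*$ is also additive over semiorthogonal decompositions, $\mathrm{HH}_*(\mathcal{A}_t)\oplus\bigoplus_{i=1}^{11}\mathrm{HH}_*(\langle\mathcal{L}_{i,t}\rangle) = \mathrm{HH}_*(S_t)$, and each $\langle\mathcal{L}_{i,t}\rangle \simeq \mathrm{D}^b(\mathrm{pt})$ contributes $\mathrm{HH}_*(\mathrm{pt})$ which is $\CC$ in degree $0$ and $0$ elsewhere. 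Since $p_g(S_t) = q(S_t) = 0$ one has $\dim\mathrm{HH}_0(S_t) = \sum_{p-q=0}h^{p,q} = 1 + h^{1,1}$ while $\dim\mathrm{HH}_k(S_t) = 0$ for $k\neq 0$ (the Hodge numbers $h^{p,q}$ with $|p-q|\neq 0$ all vanish for a surface with $p_g=q=0$), and $h^{1,1} = \mathrm{rank}\,\mathrm{Pic}(S_t) = $ (here one uses that all cohomology is algebraic, as in Remark~\ref{rInvariants}, so $b_2 = h^{1,1}$), making $\dim\mathrm{HH}_0(S_t) = 11$. Comparing, $\mathrm{HH}_*(\mathcal{A}_t) = 0$ in all degrees.

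It remains to show $\mathcal{A}_t \neq 0$, i.e. that the exceptional sequence is genuinely not full. This is exactly the point where the naive Grothendieck-group argument is unavailable (both sides would be $\ZZ^{11}$ whether or not the sequence is full), and it is the main obstacle. I would handle it as follows: suppose for contradiction that $(\mathcal{L}_{1,t},\dots,\mathcal{L}_{11,t})$ is full for generic $t$. Then $\mathrm{D}^b(S_t)$ would be equivalent to the derived category of modules over the $A_\infty$-Yoneda algebra $H^*(\mathfrak{A}_t)$, and by Proposition~\ref{pRigidity} this algebra — including its full $A_\infty$-structure, which we showed has no higher products and a rigid $m_2$ — is \emph{constant} in a neighbourhood of a generic $t$. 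Hence $\mathrm{D}^b(S_{t_1})\simeq \mathrm{D}^b(S_{t_2})$ for $t_1, t_2$ near a generic point. But $S_t$ is a minimal surface of general type, so by Kawamata's theorem (\cite{Kawamata02}, cited in the introduction) derived-equivalent minimal surfaces of general type are isomorphic, whereas the moduli space of determinantal Barlow surfaces is $2$-dimensional and generic nearby surfaces are non-isomorphic. This contradiction shows the sequence is not full, so $\mathcal{A}_t\neq 0$, completing the proof that $\mathcal{A}_t$ is a genuine phantom and that the semiorthogonal decomposition of Theorem~\ref{tMain} holds for generic $S_t$. (An alternative route to non-fullness, which I would mention, is via Kuznetsov's height/pseudoheight estimates as used in Section~\ref{sKuznetsov} for $S$ itself; but the deformation/Kawamata argument is the cleanest in the family.)
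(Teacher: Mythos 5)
Your proof is correct and takes essentially the same route as the paper: rigidity of the Yoneda algebra (Proposition \ref{pRigidity}) combined with Kawamata's theorem to rule out fullness, and the Bloch conjecture via Voisin (Theorem \ref{cBloch}) to get $\mathrm{K}_0(S_t)\simeq\ZZ^{11}$ and hence $\mathrm{K}_0(\mathcal{A}_t)=0$; you merely spell out the Hochschild homology count that the paper leaves implicit. (One trivial slip: $\dim\mathrm{HH}_0(S_t)=h^{0,0}+h^{1,1}+h^{2,2}=1+9+1=11$, not $1+h^{1,1}$.)
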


\begin{proof}
By Proposition \ref{pRigidity}, the subcategories $\langle \mathcal{L}_{1, t}, \dots , \mathcal{L}_{11, t}\rangle$ are all equivalent. However, by \cite{Kawamata02}, two minimal surfaces of general type whose derived categories are equivalent are isomorphic. It follows that the sequence $( \mathcal{L}_{1, t}, \dots , \mathcal{L}_{11, t})$ cannot be full, i.e. there is a nontrivial complement $\mathcal{A}_t$ (generically). Since $\mathrm{K}_0 (S_t)$ is isomorphic to $\ZZ^{11}$ it follows that $\mathcal{A}_t$ is a phantom. Here we use Theorem \ref{cBloch} for the generic determinantal Barlow surface to have $\mathrm{K}_0 (S_t)\simeq \ZZ^{11}$.
\end{proof}

\section{Heights of exceptional collections and a phantom on the Barlow surface}\xlabel{sKuznetsov}

This section contains a proof that the Barlow surface $S$ itself contains a phantom (whereas Corollary \ref{cPhantom} gives this for a general determinantal Barlow surface somewhere in the moduli space). For this, we will use a result of Kuznetsov concerning heights of exceptional collections, see \cite{Kuz12}. First we need to recall some notions.

Given two objects $F,F'$ in a triangulated category $\mathcal{T}$, we define their relative height to be
\[e(F,F'):=\min \left\{p\in \mathbb{Z}\;|\; \text{Hom}(F,F'[p])\neq 0\right\}.\]

Now consider an exceptional collection $(E_1,\ldots, E_n)$ in the bounded derived category of coherent sheaves on some smooth projective variety $Z$ and an autoequivalence $\Phi\colon \mathcal{T}\rightarrow \mathcal{T}$. The \emph{extended collection} (which need not be exceptional) is $(E_1,\ldots, E_n,\Phi(E_1),\ldots, \Phi(E_n))=(E_1,\ldots,E_{2n})$.

An \emph{arc} from $i$ to $j$, where $1\leq i<j\leq i+n\leq 2n$, is an increasing set of integers
\[i=a_0<a_1<\ldots <a_{k-1}<a_k=j,\]
and the \emph{length} of an arc is set to be $a_k-a_0$. An arc of the maximal possible length $n$ is called a \emph{circle}. The set of all possible circles is denoted by $\text{circ}(n)$. 

For an extended collection as above and an arc $a=(a_0,\ldots, a_k)$ the \emph{height} of the collection along $a$ is defined as
\[e_a:=e(E_{a_0},E_{a_1})+\ldots +e(E_{a_{k-1}},E_{a_k})-k+1.\]
The $\Phi$-\emph{height} of the collection is then
\[h_{\Phi}:=\min_{a \in \text{circ}(n)} e_a.\]

Very roughly speaking, the height measures the smallest degree one can get by composing elements in the Yoneda algebra associated to the exceptional collection.\smallskip

In the following we will be concerned with the case where $\Phi\cong -\otimes \omega_Z^{-1}$. The associated height is called the \emph{anticanonical height} and denoted by $h$. The following result is from \cite{Kuz12}.

\begin{proposition}
Let $(E_1,\ldots, E_n)$ be an exceptional collection in $\mathrm{D}^b(Z)$. If $h\geq 1-\dim(Z)$, then the collection is not full.
\end{proposition}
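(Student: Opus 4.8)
The plan is to prove the contrapositive: if the exceptional collection $(E_1,\dots ,E_n)$ is full, i.e.\ $\mathrm{D}^b(Z)=\langle E_1,\dots ,E_n\rangle$, then its anticanonical height satisfies $h\le -\dim Z$, which contradicts the hypothesis $h\ge 1-\dim Z$. Write $d=\dim Z$ and let $S_Z=(-\otimes\omega_Z)[d]$ be the Serre functor of $\mathrm{D}^b(Z)$, so that $\Phi=-\otimes\omega_Z^{-1}$ equals $S_Z^{-1}[d]$.

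The first step is to record that a \emph{full} exceptional collection is a helix relative to $S_Z$: using Serre duality one shows that $(E_1,\dots ,E_n)$ is a full exceptional collection if and only if $(E_2,\dots ,E_n,S_Z^{-1}E_1)$ is one; iterating, the $\ZZ$-indexed sequence defined by $E_{i+n}:=\Phi E_i\,(=S_Z^{-1}E_i[d])$ has the property that every length-$n$ window $(E_{i+1},\dots ,E_{i+n})$ is again a full exceptional collection. In particular the extended collection $(E_1,\dots ,E_n,\Phi E_1,\dots ,\Phi E_n)$ from the definition of the height is one period of this helix, so each window $(E_{i+1},\dots ,E_n,\Phi E_1,\dots ,\Phi E_i)$ is full exceptional.

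The second step uses the resolution of the diagonal. Because the collection is full, the diagonal object $\mathcal{O}_\Delta\in\mathrm{D}^b(Z\times Z)$ — equivalently, the identity bimodule of the $A_\infty$-category $\mathcal{C}$ with objects $E_1,\dots ,E_n$ and morphism complexes $\mathrm{RHom}(E_i,E_j)$ — admits a finite Beilinson-type resolution with terms $G_i\boxtimes E_i$, where $(G_n,\dots ,G_1)$ is the left-dual exceptional collection (a resolution of the identity bimodule by representable bimodules). The quantitative input, and the only place $d$ genuinely enters, is that since $\mathcal{O}_\Delta$ is a shift of a sheaf supported in codimension $d$ — equivalently, $\mathrm{D}^b(Z)$ has homological dimension $d$ — this resolution occupies a band of cohomological degrees of width exactly $d$. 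Converting the gluing data of such a length-$d$ resolution into a chain of morphisms running once around the helix, from $E_1$ back to $\Phi E_1$ through at most $d$ steps of nonpositive relative height, and keeping track of shifts, yields an arc $a=(1=a_0,a_1,\dots ,a_k=n+1)$ with $\sum_{l=1}^{k} e(E_{a_{l-1}},E_{a_l})\le k-1-d$, i.e.\ a circle with $e_a\le -d$; hence $h\le -d<1-d$. (The same diagonal estimate underlies Kuznetsov's comparison in \cite{Kuz12} of $\mathrm{HH}^\bullet$ of the orthogonal complement of $\langle E_1,\dots ,E_n\rangle$ with $\mathrm{HH}^\bullet(Z)$ in a range of degrees governed by $h$, from which non-fullness also follows: a full collection would force its vanishing complement to have $\mathrm{HH}^0$ equal to $\mathrm{HH}^0(Z)\supseteq H^0(Z,\mathcal{O}_Z)\neq 0$.)

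The step I expect to be the main obstacle is the last one — converting ``the diagonal resolution has length $d$'' into ``there is an explicit circle with $e_a\le -d$''. This amounts to matching the homological amplitude of the resolution against the combinatorial definition of $e_a$, with its $-k+1$ normalization, across the transition into the second, $\Phi$-twisted copy of the extended collection — exactly Kuznetsov's computation of the (pseudo)height of a full exceptional collection. One must also be careful with shift conventions, in particular the shift $[d]$ relating $\Phi$ to $S_Z^{-1}$ and the conventions in Serre duality, since an off-by-one there replaces $h\le -d$ by a useless inequality. Accordingly a complete write-up would: (i) establish the helix rotation; (ii) recall the resolution of the diagonal and prove its width-$d$ bound; (iii) carry out the combinatorial translation producing the circle; (iv) conclude the contrapositive.
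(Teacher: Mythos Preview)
The paper does not prove this proposition at all: it is quoted verbatim from Kuznetsov's preprint \cite{Kuz12} (``The following result is from \cite{Kuz12}''), so there is no in-paper argument to compare against.

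As for your sketch itself: the parenthetical route you mention---Kuznetsov's comparison of $\mathrm{HH}^\bullet$ of the orthogonal complement with $\mathrm{HH}^\bullet(Z)$ in a range of degrees controlled by the height, combined with $\mathrm{HH}^0(Z)\neq 0$---is in fact the argument in \cite{Kuz12}, and it is complete as stated. Your main line, by contrast, is not yet a proof. The helix rotation (step~(i)) is fine, but step~(iii), ``converting the length-$d$ diagonal resolution into a circle with $e_a\le -d$'', is precisely the content of the result you are trying to establish, and you have not indicated how the Postnikov/gluing data of the Beilinson resolution produces actual nonzero morphisms $E_{a_{l-1}}\to E_{a_l}[p_l]$ in the specified degrees, nor why the resulting indices can be arranged into a \emph{circle} (length exactly $n$) rather than some shorter arc. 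You flag this yourself as the main obstacle, and it is a genuine gap: what one gets directly from the resolution is information about $\mathrm{RHom}(G_i,E_j)$ for the dual collection $G_i$, and translating this into bounds on $e(E_{a_{l-1}},E_{a_l})$ along a circle requires the mutation/height calculus that Kuznetsov develops. If you want a self-contained argument, promote the parenthetical $\mathrm{HH}$-comparison to the main text and drop the combinatorial translation.
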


We can apply this to the Barlow surface.

\begin{proposition}
The height of the anticanonically extended collection constructed in Section \ref{sDeformation} is equal to $2$.
\end{proposition}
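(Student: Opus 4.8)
The plan is to compute the $\Phi$-height directly from the cohomology datum matrix of Lemma~\ref{lRigidity} together with its ``anticanonical tail'', i.e.\ the extended collection $(\mathcal{L}_1,\dots,\mathcal{L}_{11},\mathcal{L}_1\otimes\omega_S^{-1},\dots,\mathcal{L}_{11}\otimes\omega_S^{-1})=(E_1,\dots,E_{22})$. First I would write down the relative heights $e(E_i,E_j)$ for all pairs $i<j\le i+11$. For pairs inside the original collection this is immediate from Lemma~\ref{lRigidity}: it is $+\infty$ whenever the cohomology datum is $0$, it is $0$ along the diagonal, it is $0$ for the pairs $(\mathcal{L}_i,\mathcal{L}_j)$ with datum $[0,1,0]$ (the ``circle into square'' morphisms) but these contribute an $\mathrm{Ext}^1$ so $e=1$, wait — more carefully, $e$ is the \emph{minimal} $p$ with $\mathrm{Hom}(E_i,E_j[p])\ne 0$, so datum $[0,1,0]$ gives $e=1$, datum $[0,0,1]$ gives $e=2$, datum $[0,1,1]$ gives $e=1$, and datum $[1,0,0]$ gives $e=0$. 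For the ``crossing'' pairs $(E_i,E_{j+11})=(\mathcal{L}_i,\mathcal{L}_j\otimes\omega_S^{-1})$ with $i\le j$ I would use Serre duality: $\mathrm{Hom}(\mathcal{L}_i,\mathcal{L}_j\otimes\omega_S^{-1}[p])=\mathrm{Hom}(\mathcal{L}_j,\mathcal{L}_i[2-p])^{\vee}$, which again is read off Lemma~\ref{lRigidity} (now using the \emph{backward} entries, i.e.\ the transpose). So the full $22\times 22$ data of relative heights is determined by the single matrix already computed.

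Next I would minimize $e_a=\sum_{s=1}^{k} e(E_{a_{s-1}},E_{a_s})-k+1$ over all circles $a$, i.e.\ over all increasing chains $i=a_0<\dots<a_k=i+11$. The key structural observation to exploit is the one already highlighted in Remark~\ref{rVisual}: inside one period the only nonzero forward morphisms are (i) circle $\to$ last circle, (ii) circle $\to$ square, (iii) square $\to$ circle, with respective minimal degrees $1$, $1$, $2$; and by Serre duality the ``wrap-around'' morphisms $\mathcal{L}_i\to\mathcal{L}_j\otimes\omega_S^{-1}$ have minimal degree $2-\{$backward degree$\}$. Since each edge contributes $e(\cdot,\cdot)-1$ to $e_a+$(1), a circle that uses only datum-$[1,0,0]$ edges would be ideal, but there are no such edges off the diagonal here (the collection is far from being strong/Hom-only), so every circle must pass through at least one genuine jump. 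I would then argue combinatorially that the cheapest circle uses exactly the morphisms realizing the picture in Remark~\ref{rVisual}: enter a square from a circle ($e=1$, cost $0$), leave it to a circle ($e=2$, cost $1$), and eventually close up via a degree-$2$ anticanonical (Serre-dual) arrow or via the $[0,1,1]$ first-to-last arrow — and that the minimum of $e_a$ achievable is $2$. Concretely one exhibits one circle with $e_a=2$ (giving $h\le 2$) and checks no circle does better (giving $h\ge 2$).

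The main obstacle is the lower bound $h\ge 2$: I must rule out every circle of height $0$ or $1$. This is a finite but genuinely case-heavy check because a circle is free to jump back and forth between squares and circles and to use the wrap-around arrows in many orders; the $\ast$ entries ($[0,0,0]$ or $[0,1,1]$) and their Serre duals introduce a little extra bookkeeping, though since we only need a statement about the \emph{minimum} it suffices to treat the $\ast$'s in whichever way is most favorable to a low height, i.e.\ as $[0,1,1]$ (with backward dual $[0,1,1]$), and show even then $h\ge 2$. The cleanest way to organize this is to note that every edge has $e\ge 1$ except diagonal/identity-type edges, and in a circle from $i$ to $i+11$ one cannot use only identity edges; a short parity/degree argument on which of the three edge types (circle$\to$last, circle$\to$square, square$\to$circle, plus their duals) can be chained together should force $\sum e(E_{a_{s-1}},E_{a_s})\ge k+1$ with equality impossible, hence $e_a\ge 2$ for all circles. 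Having pinned $h=2$, note this is $\ge 1-\dim S=-1$, so Kuznetsov's Proposition applies and re-proves non-fullness; combined with $\mathrm{K}_0(S)\simeq\ZZ^{11}$ (Remark~\ref{rInvariants}) and $\mathrm{HH}_\ast$ vanishing, the orthogonal complement on $S$ itself is a phantom.
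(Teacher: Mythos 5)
Your reduction of the ``crossing'' data to Lemma \ref{lRigidity} rests on an incorrect Serre duality formula, and this is fatal to the whole strategy. Serre duality gives
\[
\mathrm{Ext}^p(\mathcal{L}_i,\mathcal{L}_j\otimes\omega_S^{-1})\;\cong\;\mathrm{Ext}^{2-p}(\mathcal{L}_j,\mathcal{L}_i\otimes\omega_S^{2})^{\vee}\;=\;H^{2-p}(2K_S+\mathcal{L}_i-\mathcal{L}_j)^{\vee},
\]
not $\mathrm{Hom}(\mathcal{L}_j,\mathcal{L}_i[2-p])^{\vee}$: you have dropped the twist by $\omega_S^{2}$. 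Consequently the relative heights along the last (wrap-around) segment of a circle are \emph{not} determined by the matrix of Lemma \ref{lRigidity}; they are genuinely new cohomological data. Your formula even produces a wrong answer: for the pair $(\mathcal{L}_4,\mathcal{L}_3\otimes\omega_S^{-1})$ it would give $e=0$ (since the only nonzero forward group between $\mathcal{L}_3$ and $\mathcal{L}_4$ is an $\mathrm{Ext}^2$), and then the circle $3\to 4\to 14$ would have $e_a=2+0-2+1=1$, contradicting the statement $h=2$. The paper avoids this by computing the $22\times 22$ matrix of lower bounds for $e(\cdot,\cdot)$ directly (via the section estimates of Proposition \ref{pSectionEstimate} over a finite field), and, in the hand-made variant, by geometric arguments: Kodaira--Viehweg vanishing plus $H^0(2K_S)\neq 0$ for the one-segment circles, and for longer circles the observation that $K_S$ big and nef forces any nonzero $\mathrm{Hom}(L,L')$ with $L.K_S\geq L'.K_S$ to come from an effective sum of $(-2)$-curves, which reduces everything to two explicit spaces $\mathrm{Hom}(L_{11},L_3-K)$ and $\mathrm{Hom}(L_{11},L_7-K)$ that are killed by intersection computations with the $\tildeC^{\pm}_i$.

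Separately, even if the crossing data were granted, your lower bound $h\geq 2$ is only gestured at (``a short parity/degree argument \dots should force''). This is precisely where the work lies: the forward matrix alone only yields $h\geq 0$ (each of the first $k-1$ segments contributes at least $1$, the last at least $0$ since all objects are sheaves), and upgrading this to $h\geq 2$ requires the case analysis of the last segment described above. So the proposal is not a complete proof; you would need to (i) correct the duality and actually compute $H^{*}(2K_S+\mathcal{L}_i-\mathcal{L}_j)$ for the relevant pairs, and (ii) carry out the minimization over circles explicitly.
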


\begin{proof}

For $\sL_i, i=1,\dots,11$ as in Proposition \ref{pSequence} and $\sL_{i+11} := \sL_i(-\tildeK)$ we use Proposition \ref{pSectionEstimate}
to directly calculate lower bounds
\[
\bigl(e(\sL_j,\sL_i)\bigr)_{i=1\dots22, j=1\dots22} \ge
\] \[
	 {\tiny \left( 
	\begin{array}{c@{}c@{}c@{}c@{}c@{}c@{}c@{}c@{}c@{}c@{}c|c@{}c@{}c@{}c@{}c@{}c@{}c@{}c@{}c@{}c@{}c}
	- & - & - & - & - & - & - & - & - & - & - & - & - & - & - & - & - & - & - & - & - & - \\
	\infty & - & - & - & - & - & - & - & - & - & - & - & - & - & - & - & - & - & - & - & - & - \\
	1 &1 &- & - & - & - & - & - & - & - & - & - & - & - & - & - & - & - & - & - & - & - \\
	\infty & \infty & 2 &- & - & - & - & - & - & - & - & - & - & - & - & - & - & - & - & - & - & - \\
	\infty & \infty & 2 &\infty & - & - & - & - & - & - & - & - & - & - & - & - & - & - & - & - & - & - \\
	\infty & \infty & 2 &\infty & \infty & - & - & - & - & - & - & - & - & - & - & - & - & - & - & - & - & - \\
	1 &1 &\infty & 1 &1 &1 &- & - & - & - & - & - & - & - & - & - & - & - & - & - & - & - \\
	\infty & \infty & 2 &\infty & \infty & \infty & 2 &- & - & - & - & - & - & - & - & - & - & - & - & - & - & - \\
	\infty & \infty & 2 &\infty & \infty & \infty & 2 &\infty & - & - & - & - & - & - & - & - & - & - & - & - & - & - \\
	\infty & \infty & 2 &\infty & \infty & \infty & 2 &\infty & \infty & - & - & - & - & - & - & - & - & - & - & - & - & - \\
	1 &1 &2 &1 &\infty & \infty & 2 &\infty & \infty & 1 &- & - & - & - & - & - & - & - & - & - & - & - \\
	\hline
	2 &2 &2 &2 &2 &2 &2 &2 &2 &2 &2 &- & - & - & - & - & - & - & - & - & - & -  \\
	- & 2 &2 &2 &2 &2 &2 &2 &2 &2 &2 &\infty & - & - & - & - & - & - & - & - & - & - \\
	- & - & 2 &1 &\infty & \infty & 2 &\infty & \infty & 1 &\infty & 1 &1 &- & - & - & - & - & - & - & - & - \\
	- & - & - & 2 &2 &2 &2 &2 &2 &2 &2 &\infty & \infty & 2 &- & - & - & - & - & - & - & - \\
	- & - & - & - & 2 &2 &2 &2 &2 &2 &2 &\infty & \infty & 2 &\infty & - & - & - & - & - & - & - \\
	- & - & - & - & - & 2 &2 &2 &2 &2 &2 &\infty & \infty & 2 &\infty & \infty & - & - & - & - & - & - \\
	- & - & - & - & - & - & 2 &\infty & \infty & 1 &\infty & 1 &1 &\infty & 1 &1 &1 &- & - & - & - & - \\
	- & - & - & - & - & - & - & 2 &2 &2 &2 &\infty & \infty & 2 &\infty & \infty & \infty & 2 &- & - & - & - \\
	- & - & - & - & - & - & - & - & 2 &2 &2 &\infty & \infty & 2 &\infty & \infty & \infty & 2 &\infty & - & - & - \\
	- & - & - & - & - & - & - & - & - & 2 &2 &\infty & \infty & 2 &\infty & \infty & \infty & 2 &\infty & \infty & - & - \\
	- & - & - & - & - & - & - & - & - & - & 2 &1 &1 &2 &1 &\infty & \infty & 2 &\infty & \infty & 1 &- 
	\end{array}
	\right) }
\]
where we placed $-$'s where no information is needed. By results of \cite{Kuz12} the height is at least $2$. 
Moreover, it is clear by consideration of the first column that the height is at most $2$. 

One can also prove this result by hand as follows. First note that a circle with one segment has height at most 2. Indeed, we have to consider $\mathrm{Hom}(L_i,(L_{i+11}-\tildeK)[p])\cong H^{2-p}(2\tildeK)$ and this is $0$ for $p\leq 2$ by Kodaira-Viehweg vanishing. On the other hand, $H^0(2\tildeK)\neq 0$, so $h\leq 2$.

Thus, we consider circles with at least 2 segments. By \cite{Kuz12} we can assume that $a_{k-1}\leq 11$. The fact that $h\geq 0$ then immediately follows from the data collected in the matrix in Lemma \ref{lRigidity}, since any of the first $k-1$ segments of a circle $a$ contributes at least $1$ to the sum $e_a$. The last segment's contribution is at least $0$, because all members of the collection are sheaves.

In fact, one can prove that $h\geq 2$ as follows. We will concentrate on the last segment of a circle. Hence, the first object of this segment is in the original sequence and the second in the extended part. Also note that, by definition, the distance between them is at most 11. Since the canonical bundle is big and nef on the Barlow surface, the following statement holds.

If $L.K_S\geq L'.K_S$, then either $\Hom(L,L')=0$, $L\cong L'$ or $D=L'-L$ is a sum of $(-2)$-curves such that $D.K_S=0$. This follows at once from the assumption and the fact that a non-trivial homomorphism from $L$ to $L'$ gives a section of the effective divisor $L'-L$. Applying this to our extended sequence, we see that both line bundles involved in the segment have to be of degree 0, hence the second bundle in the segment can be either $E_{14}=L_3-K$ or $E_{18}=L_7-K$. Since the length of the segment is at most 11, we only have to consider the spaces $\Hom(L_i,L_3-K)$ for $i\geq 4, i\neq 7$ and $\Hom(L_j,L_7-K)$ for $j\geq 8$. Now any circle with $k$ segments whose last segment is $(L_i,L_3-K)$ with $i\geq 4, i\neq 7,11$, has the property that each of the first $k-1$ segments has relative height at least $2$ which, yet again, follows from Lemma \ref{lRigidity}. Hence, any such circle has length at least $1$. Same reasoning holds for circles involving $(L_j,L_7-K)$ with $8\leq j<11$.
Thus, we only need to consider the spaces $\Hom(L_{11},L_3-K)$ and $\Hom(L_{11},L_7-K)$ or, to put it differently, we have to check whether the divisors $L_3-K-L_{11}$ and $L_7-K-L_{11}$ are sums of irreducible effective $(-2)$-curves. Since the first divisor is $\widetilde{E}_2-\widetilde{E}^+_4$ and the second is $\widetilde{E}_2-\widetilde{E}^+_5$, their intersection with $\widetilde{C}^{\pm}_1$ is $0$ and the intersection with $\widetilde{C}^{\pm}_2$ is $1$. Hence, they cannot be sums of the exceptional curves. Furthermore, one can directly check, that, in fact, $\Hom(L_{11},(L_7-\tildeK)[1])=0$, so the above argument readily gives $h\geq 2$.

\end{proof}

\section{Conjectures} \xlabel{sConjectures}

Recently some progress was made towards proving Kontsevich's Homological Mirror Symmetry (HMS) conjecture, originally proposed for Calabi--Yau, and a little later also for Fano varieties - see \cite{AKO}. The scope of HMS was then extended to varieties of general type in  \cite{KKOY} and \cite{GKR}. As noted ibid.\
we  have a behaviour of Fukaya categories similar to derived categories for manifolds of general type due to the fact that Landau--Ginzburg models determine their geometries. So we propose

\begin{conjecture} 
The Fukaya category of the Barlow surface associated to a generic K\"ahler form has an orthogonal decomposition into a phantom category and eleven matrix factorization categories of type $\mathrm{MF}(\mathbb{C}(x,y)/x^2+ y^2)$.
\end{conjecture}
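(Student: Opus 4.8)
The plan is to deduce this from Homological Mirror Symmetry for surfaces of general type, in the form envisaged in \cite{KKOY} and \cite{GKR}. One first has to construct a mirror Landau--Ginzburg model $w\colon M\to\CC$ of the Barlow surface $S$; since $S$ is homeomorphic to the del Pezzo surface of degree one (see Section \ref{sLatticeTheory}), a natural first guess is to take a suitable deformation of the Landau--Ginzburg mirror of that del Pezzo, modified to reflect that $K_S$ is big and nef rather than anti-ample. One then establishes an equivalence of $A_{\infty}$-categories between $\mathrm{Fuk}(S,\omega)$ and the derived category of $(M,w)$, and a matching of the latter with $\mathrm{D}^b(S)$, so that the semiorthogonal decomposition
\[
\mathrm{D}^b(S)=\langle \mathcal{A},\mathcal{L}_1,\dots,\mathcal{L}_{11}\rangle
\]
of Theorem \ref{tMain} and Section \ref{sKuznetsov} transports to the Fukaya side.

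It then remains to identify the pieces. Each $\mathcal{L}_i$ is exceptional, so $\langle\mathcal{L}_i\rangle\simeq\mathrm{D}^b(\mathrm{pt})$, and by Kn\"orrer periodicity this is equivalent to the matrix factorization category $\mathrm{MF}(\CC[x,y]/(x^2+y^2))$ of the node $\{x^2+y^2=0\}$; this accounts for the eleven summands in the conjecture. The image of the complement $\mathcal{A}$ under the equivalence again has vanishing Hochschild homology and Grothendieck group, since both invariants are preserved by equivalences, so it is again a phantom. The feature that makes a direct symplectic verification realistic is the computation of Section \ref{sDeformation}: in the proof of Proposition \ref{pRigidity} we showed that the Yoneda $A_{\infty}$-algebra of $(\mathcal{L}_1,\dots,\mathcal{L}_{11})$ is \emph{formal}, with $m_d=0$ for $d\ge 3$, and has the rigid cohomology data of Lemma \ref{lRigidity}. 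Thus an alternative, more hands-on route would be to exhibit Lagrangian thimbles $\mathcal{V}_1,\dots,\mathcal{V}_{11}$ mirror to the $\mathcal{L}_i$, check that the Floer cohomologies $HF^{\bullet}(\mathcal{V}_i,\mathcal{V}_j)$ reproduce the matrix of Lemma \ref{lRigidity}, and observe that any chain of at least three composable morphisms already carries total degree larger than $2$ --- so that all higher Floer products vanish for exactly the degree reasons used in Section \ref{sDeformation}, and $\langle\mathcal{V}_1,\dots,\mathcal{V}_{11}\rangle$ is forced to be a direct sum of eleven copies of $\mathrm{MF}(\CC[x,y]/(x^2+y^2))$.

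Two points need extra care. The conjecture asks for an \emph{orthogonal} decomposition, whereas $(\mathcal{L}_1,\dots,\mathcal{L}_{11})$ is only semiorthogonal; here one would exploit the freedom in the generic K\"ahler form $\omega$ to isotope the mirror Lagrangians until they are pairwise disjoint, or equivalently replace the collection by a mutation in which the remaining forward morphisms no longer survive in the $\ZZ/2$-graded Fukaya category. The main obstacle, however, is the very first step: at present neither a mirror Landau--Ginzburg model of $S$ nor HMS for $S$ is available, and even the hands-on approach hinges on producing the correct eleven Lagrangians, which is itself a hard problem. Nonetheless, the rigidity and formality established in Section \ref{sDeformation} are precisely the phenomena one would need on the symplectic side, which is part of the evidence for the conjecture.
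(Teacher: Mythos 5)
The statement you are addressing is presented in the paper as an open conjecture in Section \ref{sConjectures}; the paper offers no proof of it, only the motivation coming from \cite{KKOY}, \cite{GKR} and the results of Sections \ref{sDeformation} and \ref{sKuznetsov}. Your text is likewise not a proof but a strategy sketch, and you say so yourself: the essential inputs --- a mirror Landau--Ginzburg model for $S$, an HMS equivalence for $S$, and the construction of eleven Lagrangians mirror to the $\mathcal{L}_i$ --- are all unavailable. So the gap is the entire argument; what you have written is a reasonable articulation of why the conjecture is plausible (and it correctly identifies the formality and rigidity of the Yoneda algebra from Proposition \ref{pRigidity} and Lemma \ref{lRigidity} as the structural evidence), but it establishes nothing beyond what the paper already asserts as motivation.

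One step in your sketch is actually wrong rather than merely incomplete: to pass from the semiorthogonal decomposition on the B-side to the \emph{orthogonal} one demanded by the conjecture, you propose to ``isotope the mirror Lagrangians until they are pairwise disjoint.'' Floer cohomology is invariant under Hamiltonian isotopy, so making the Lagrangians geometrically disjoint cannot kill the nonzero groups $\mathrm{Ext}^{\bullet}(\mathcal{L}_i,\mathcal{L}_j)$ recorded in Lemma \ref{lRigidity}; disjoint Lagrangians with nonvanishing Floer cohomology simply acquire more intersection points that cancel in cohomology. The intended mechanism in the conjecture is different: the Fukaya category depends on the choice of K\"ahler form (not just on the diffeomorphism type), and the claim is that for a \emph{generic} form the forward morphisms are absent, yielding genuine orthogonality --- this is precisely the nontrivial symplectic content of the conjecture and cannot be obtained by an isotopy argument. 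Your identification of each $\langle\mathcal{L}_i\rangle$ with $\mathrm{MF}(\CC[x,y]/(x^2+y^2))$ via Kn\"orrer periodicity is fine at the level of $\ZZ/2$-graded categories, but it, too, presupposes the HMS equivalence you have not supplied.
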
 

This observation suggests that a geometric procedure which might lead to creation of phantoms is a series of rational  blow downs and smoothings - something known as conifold and extremal exoflop transitions in physics, see \cite{WIT} and \cite{ASP}. The variety of phantoms one might get seems very big as \cite{AV} suggests. As a result we put forward the following
 
\begin{conjecture}[see also  \cite{FHK}] 
There exist exotic symplectic manifolds homeomorphic to Del
  Pezzo surfaces, which can be distinguished by phantom orthogonal
  components in their Fukaya categories.
\end{conjecture}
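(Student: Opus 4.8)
\emph{A possible strategy.} The plan is to derive the conjecture from the preceding one, on the Fukaya category of the Barlow surface, together with Homological Mirror Symmetry for del Pezzo surfaces. The exotic symplectic manifold will be the Barlow surface $S$ itself, carrying the symplectic form underlying its minimal K\"ahler structure, and the comparison space will be the degree one del Pezzo surface $Y_1 = \PP^2$ blown up at eight general points. By \cite{Barlow1}, \cite{KOT}, \cite{OKV} and \cite{DON}, $S$ and $Y_1$ are homeomorphic but not diffeomorphic, so $S$ is already exotic as a smooth, hence symplectic, four-manifold relative to the monotone symplectic structure on $Y_1$; the content of the conjecture is to detect this difference categorically, through a phantom.

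First I would handle the del Pezzo side. Applying HMS for del Pezzo surfaces (\cite{AKO}) one identifies the relevant Fukaya-type category of the monotone $Y_1$ with $\mathrm{D}^b(Y_1)$. As $Y_1$ is rational, $\mathrm{D}^b(Y_1)$ admits a full exceptional collection of length $11$; it is therefore generated by that collection, its Hochschild homology is concentrated in degree zero, and it has no phantom orthogonal summand. This step is essentially available in the literature.

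Second I would prove the preceding conjecture, i.e.\ transfer the phantom $\mathcal{A}$ of Theorem \ref{tMain} from $\mathrm{D}^b(S)$ to the Fukaya category of $S$. The mechanism is HMS for surfaces of general type in the style of \cite{KKOY} and \cite{GKR}: construct a Landau--Ginzburg model $(\check{S}, w)$ mirror to $S$, establish an equivalence between $\mathrm{D}^b(S)$ and the Fukaya--Seidel category of $(\check{S}, w)$, and carry the semiorthogonal decomposition $\mathrm{D}^b(S) = \langle \mathcal{A}, \mathcal{L}_1, \dots, \mathcal{L}_{11} \rangle$ across it. Under this correspondence the eleven exceptional line bundles should go over to the eleven vanishing-cycle summands, each a category of matrix factorizations of a node $x^2 + y^2$ (equivalently, by Kn\"orrer periodicity, $\mathrm{D}^b$ of a point), while $\mathcal{A}$ becomes a phantom orthogonal summand of the Fukaya category of $S$. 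Since the Fukaya category is a deformation invariant of the symplectic manifold, combining the two computations shows that $S$ and $Y_1$, although homeomorphic, carry symplectic structures whose Fukaya categories differ by a phantom --- precisely what the conjecture asserts. The same scheme, applied to other general-type surfaces homeomorphic to rational ones (or obtained from such by the rational blowdown/smoothing transitions mentioned above), should produce further examples.

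The hard part will be the second step: HMS for the Barlow surface sharp enough to see $\mathcal{A}$. The intricate determinantal presentation of $S$ makes even writing down a workable mirror delicate, general-type HMS is not available in the required generality, and --- as stressed in the introduction --- we do not yet know how to describe $\mathcal{A}$ by explicit objects, which on the symplectic side would amount to producing and controlling the Floer theory of concrete Lagrangians generating a subcategory with vanishing Grothendieck group and Hochschild homology. A sensible intermediate target is to first establish the preceding conjecture for a generic determinantal Barlow surface $S_t$, where the constancy of the $A_\infty$-Yoneda algebra from Proposition \ref{pRigidity} may make the mirror description uniform enough to be tractable.
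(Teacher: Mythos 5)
This statement is presented in the paper as a \emph{conjecture}: no proof is given there, and the surrounding text only offers motivation (the behaviour of Fukaya categories of general type surfaces under HMS, and the rational blowdown/conifold transition heuristic from \cite{WIT}, \cite{ASP}, \cite{AV}). Your text is accordingly a strategy sketch rather than a proof, and to your credit you say so explicitly. The overall plan --- take $S$ itself as the exotic manifold, compare with the degree one del Pezzo surface, and detect the difference by a phantom summand of the Fukaya category --- is exactly in the spirit of the paper's discussion and of the preceding conjecture on $\mathrm{Fuk}(S)$.

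There is, however, a concrete gap in the route you describe, beyond the general unavailability of HMS for these surfaces: you invoke mirror symmetry in the wrong direction. To produce a phantom in the \emph{Fukaya} category of $S$ you need $S$ on the A-side, i.e.\ an equivalence of $\mathrm{Fuk}(S)$ with the B-side (matrix factorizations / singularity category) of a mirror Landau--Ginzburg model. What you propose instead is an equivalence of $\mathrm{D}^b(S)$ with the Fukaya--Seidel category of $(\check S, w)$; that is the other arrow of the HMS square and says nothing a priori about $\mathrm{Fuk}(S)$, so the phantom $\mathcal{A}\subset \mathrm{D}^b(S)$ from Theorem \ref{tMain} does not transfer by that equivalence alone. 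The same issue affects your del Pezzo step: \cite{AKO} relates $\mathrm{D}^b(Y_1)$ to the directed Fukaya category of its mirror LG model, not $\mathrm{Fuk}(Y_1)$ to anything, so the absence of a phantom in $\mathrm{Fuk}(Y_1)$ does not follow from the existence of a full exceptional collection in $\mathrm{D}^b(Y_1)$ without a further (currently conjectural) A-side computation. Any honest attack on this conjecture has to address both directions, or else compute the relevant Fukaya categories directly; as it stands the statement remains open, consistent with its labelling in the paper.
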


When applied to dimension 3, the above observation leads to new conjectural nonrationality criteria for conic bundles.

\begin{conjecture}[see \cite{FHK}, \cite{CKP}] Let $X$ be a conic bundle
  of dimension 3.  If $D^b(X) = (\mathcal A, E_1,\ldots , E_n) $
  is a semiorthogonal decomposition into a phantom $\mathcal A$ and an
  exceptional sequence, then $X$ is not rational.
\end{conjecture}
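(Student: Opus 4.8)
The plan is to argue by contradiction. Suppose $X$ is a rational threefold carrying a conic bundle structure $\pi\colon X\to S$ together with a semiorthogonal decomposition $\mathrm{D}^b(X)=\langle\mathcal A,E_1,\dots,E_n\rangle$ in which $\mathcal A$ is a phantom. Since the total space dominates $S$, rationality of $X$ forces $S$ to be unirational, hence rational, so we may assume $S$ is a rational surface and $X$ is rationally connected. The first, purely numerical step is to extract restrictions from the phantom. Hochschild homology and the Grothendieck group are additive along semiorthogonal decompositions and both vanish on $\mathcal A$, while each $\langle E_i\rangle\simeq\mathrm{D}^b(\mathrm{pt})$; hence $\mathrm K_0(X)\cong\ZZ^n$ and $\mathrm{HH}_\ast(X)$ is concentrated in degree $0$, of dimension $n$. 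By the Hochschild--Kostant--Rosenberg isomorphism this gives $H^q(X,\Omega^p_X)=0$ for all $p\neq q$; in particular $h^{2,1}(X)=0$, so $H^3(X,\ZZ)$ has rank $0$ and the intermediate Jacobian $\mathrm J^2(X)$ vanishes, and since $\mathrm K_0(X)$ is free a standard Atiyah--Hirzebruch argument shows that the integral cohomology of the rationally connected threefold $X$ is torsion free, so $\mathrm{Br}(X)=0$. Thus neither the Clemens--Griffiths obstruction nor the Artin--Mumford obstruction is available, and any proof must be genuinely categorical --- which is exactly the point of the criterion.

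The second step is to exploit the birational structure. By weak factorization a rational threefold $X$ is connected to $\PP^3$ (equivalently to a smooth quadric threefold, or to $\PP^1\times\PP^2$) through a chain of blow-ups and blow-downs along smooth centres, and Orlov's blow-up formula governs the effect on $\mathrm{D}^b$: blowing up a point inserts two exceptional objects, blowing up a smooth curve $C$ inserts a copy of $\mathrm{D}^b(C)$. The goal is to deduce that a rational threefold conic bundle is \emph{categorically representable in dimension $\le 1$}: $\mathrm{D}^b(X)$ admits a semiorthogonal decomposition into admissible subcategories each equivalent to an admissible subcategory of $\mathrm{D}^b$ of a smooth projective curve or of a point. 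For conic bundles this should be more accessible than in general, since one can use the Kuznetsov--Bernardara--Bolognesi-type decomposition $\mathrm{D}^b(X)=\langle\Phi_1\mathrm{D}^b(S),\Phi_2\mathrm{D}^b(S,\mathcal C_0)\rangle$, with $S$ the rational base and $\mathcal C_0$ the even Clifford algebra, together with the reduction of the Clifford component to the relevant admissible part of $\mathrm{D}^b$ of the double cover of the discriminant curve.

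The third step converts representability into a contradiction with the phantom. If $\mathrm{D}^b(X)=\langle\mathcal B_1,\dots,\mathcal B_m\rangle$ with each $\mathcal B_i$ admissible in $\mathrm{D}^b(C_i)$ for a smooth curve (or point) $C_i$, then $\mathrm K_0(X)\cong\bigoplus_i\mathrm K_0(\mathcal B_i)$; by Okawa's indecomposability theorem $\mathrm{D}^b(C_i)$ has no nontrivial semiorthogonal decomposition when $g(C_i)\ge 1$, so each such $\mathcal B_i$ is $0$ or all of $\mathrm{D}^b(C_i)$, and in the latter case $\mathrm K_0(X)$ would contain $\mathrm{Pic}^0(C_i)$, which is not finitely generated --- contradicting $\mathrm K_0(X)\cong\ZZ^n$ from the first step. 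Hence all curves that actually occur are rational, so $\mathrm{D}^b(X)$ is built by semiorthogonal gluing from categories each carrying a full exceptional collection, whence $\mathrm{D}^b(X)$ itself carries one. It remains to observe that a derived category of this geometric type --- an iterated blow-up of $\PP^3$, a quadric or $\PP^1\times\PP^2$ along rational curves and points --- contains no nonzero phantom admissible subcategory; for the exceptional collections produced by Orlov's formula out of those on $\PP^1$, $\PP^2$ and del Pezzo surfaces this follows from the numerical rigidity and extendability of exceptional collections in that setting. This contradicts the existence of $\mathcal A$ and completes the argument.

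The main obstacle is the second step: establishing categorical representability in dimension $\le 1$ for a rational threefold conic bundle, in the precise form needed above, is an open problem closely bound up with the conjectures of Kuznetsov and of Bernardara--Tabuada, and at present only partial results are known (for instance for conic bundles over minimal, or over certain rational, surfaces). A secondary, bookkeeping-type difficulty is to control the blow-downs in the weak-factorization chain so that no curve or point component is destroyed; here one uses that the vanishing of $h^{2,1}(X)$ and the finiteness of $\mathrm K_0(X)$ proved in the first step already preclude any contribution of a curve of positive genus. Granting the representability input, the first and third steps are essentially formal.
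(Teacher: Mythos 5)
The statement you are trying to prove is stated in the paper as a \emph{conjecture} (Section \ref{sConjectures}); the paper offers no proof of it, and to the authors' knowledge none exists. Your text is, accordingly, a strategy outline rather than a proof, and you are commendably honest about the main gap: step two, categorical representability in dimension $\le 1$ for rational conic bundle threefolds, is an open problem (it is essentially the Kuznetsov/Bernardara--Tabuada circle of conjectures), and granting it begs most of the question. Your step one is fine as far as it goes, but it only confirms that the classical Clemens--Griffiths and Artin--Mumford obstructions are unavailable --- which is precisely why the conjecture is interesting, not a step toward proving it.

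There is also an unacknowledged gap in your third step. From ``$\mathrm{D}^b(X)$ carries a full exceptional collection'' you conclude that $\mathrm{D}^b(X)$ ``contains no nonzero phantom admissible subcategory,'' citing numerical rigidity and extendability of exceptional collections. This does not follow. The hypothesis of the conjecture is a \emph{different} semiorthogonal decomposition $\langle \mathcal{A}, E_1, \dots, E_n\rangle$, and the existence of one full exceptional collection does not preclude another decomposition having a phantom component: the additivity of $\mathrm{K}_0$ and $\mathrm{HH}_\ast$ only forces the two decompositions to have the same numerical invariants, exactly as for the length-$11$ sequences on the Barlow surface in this paper, which are numerically ``full'' yet have a phantom complement. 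Ruling this out is a Jordan--H\"older--type uniqueness statement for semiorthogonal decompositions, which is known to fail in general and is open in the cases you need. So even granting representability, your argument does not close; the conjecture remains a conjecture.
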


\end{document}